\DeclareSymbolFont{bbold}{U}{bbold}{m}{n}
\DeclareSymbolFontAlphabet{\mathbbm}{bbold}
\title{Useful axioms}
\keywords{MSC 2010: 03E25, 03E30, 03E40, 03E57, 03C20, 03C90.}
\theoremstyle{plain}
	\newtheorem{theorem}{Theorem}[section]
	\newtheorem{proposition}[theorem]{Proposition}
	\newtheorem{lemma}[theorem]{Lemma}
	\newtheorem{corollary}[theorem]{Corollary}
	\newtheorem{fact}[theorem]{Fact}
\theoremstyle{definition}
	\newtheorem{definition}[theorem]{Definition}
	\newtheorem{notation}[theorem]{Notation}
	\newtheorem{notation*}{Notation}
\theoremstyle{remark}
	\newtheorem{remark}[theorem]{Remark}
\newcommand{\Ord}{\ensuremath{\mathrm{Ord}}}
\newcommand{\ZFC}{\ensuremath{\mathsf{ZFC}}}
\newcommand{\ZF}{\ensuremath{\mathsf{ZF}}}
\newcommand{\MK}{\ensuremath{\mathsf{MK}}}
\DeclareMathOperator{\dom}{dom}
\DeclareMathOperator{\crit}{crit}
\DeclareMathOperator{\trcl}{trcl}
\DeclareMathOperator{\val}{val}
\DeclareMathOperator{\St}{St}
\DeclareMathOperator{\Coll}{Coll}
\DeclareMathOperator{\RO}{\mathsf{RO}}
\DeclareMathOperator{\CLOP}{CLOP}
\newcommand{\bool}[1]{\mathsf{#1}}
\newcommand{\tow}[1]{\mathcal{#1}}
\newcommand{\pow}[1]{\mathcal{P}\left(#1\right)}
\newcommand{\Int}[1]{\text{Int}\left(#1\right)}
\newcommand{\Cl}[1]{\text{Cl}\left(#1\right)}
\newcommand{\Reg}[1]{\text{Reg}\left(#1\right)}
\newcommand{\rest}[1]{\upharpoonright_{#1}}
\newcommand{\cp}[1]{\left( #1 \right)}
\newcommand{\Qp}[1]{\left\llbracket #1 \right\rrbracket}
\newcommand{\ap}[1]{\langle #1 \rangle}
\newcommand{\bp}[1]{\left\lbrace #1 \right\rbrace}
\newcommand{\MM}{\ensuremath{\text{{\sf MM}}}} 
\newcommand{\SP}{\ensuremath{\text{{\sf SP}}}}
\newcommand{\SSP}{\ensuremath{\text{{\sf SSP}}}}
\newcommand{\FA}{\ensuremath{\text{{\sf FA}}}}
\theoremstyle{definition}
\newcommand{\RA}{\ensuremath{\text{{\sf RA}}}}
\newcommand{\UnderTilde}[1]{{\setbox1=\hbox{$#1$}\baselineskip=0pt\vtop{\hbox{$#1$}\hbox to\wd1{\hfil$\sim$\hfil}}}{}}
\newcommand{\PFA}{\ensuremath{\text{{\sf PFA}}}}
\newcommand{\BFA}{\ensuremath{\text{{\sf BFA}}}}
\newcommand{\AX}{\ensuremath{\text{{\sf AX}}}}
\newcommand{\NBG}{\ensuremath{\mathsf{NBG}}}
\DeclareMathOperator{\dd}{dd}
\DeclareMathOperator{\cpd}{cpd}
\author{Matteo Viale}
\date{}
\begin{document}
	\maketitle

	
\begin{abstract}
We give a brief survey of the interplay between forcing axioms and various other non-constructive 
principles widely used
in many fields of abstract mathematics, such as the axiom of choice and Baire's category theorem.

First of all we outline how, using basic partial order theory, 
it is possible to reformulate the axiom of choice, Baire's category theorem, and
many large cardinal axioms as specific instances of forcing axioms. We then address 
forcing axioms with a model-theoretic perspective and outline a deep analogy existing 
between the standard \L o\'s Theorem
for ultraproducts of first order structures and Shoenfield's absoluteness for $\Sigma^1_2$-properties.
Finally we address the question of whether and to what extent forcing axioms can provide
``complete'' semantics for set theory.
We argue that to a large extent this is possible for certain initial fragments of the universe of sets:
The 
pioneering work of Woodin on generic absoluteness show that this is the case for the 
Chang model $L(\Ord^\omega)$ (where all of mathematics formalizable in second order number theory
can be developed) in the presence of large cardinals, and recent 
works by the author with Asper\'o and with Audrito show that this can also be the case for the Chang model 
$L(\Ord^{\omega_1})$ (where one can develop most of mathematics formalizable in third 
order number theory) in the presence of large cardinals and maximal strengthenings of
Martin's maximum or of the proper forcing axiom.
A major question we leave completely open is whether this situation is peculiar to
these Chang models
or can be lifted up also to $L(\Ord^\kappa)$ for cardinals $\kappa>\omega_1$.
\end{abstract}	


\section*{Introduction}
Since
its introduction by Cohen in 1963 forcing has been the key and the most effective 
tool for obtaining independence results in set theory.
This method has found applications in set theory and
in virtually all fields of pure mathematics: 
in the last forty years natural problems of group theory, functional analysis, 
operator algebras, general topology, and many other subjects were shown to 
be undecidable by means of forcing.
Starting from the early seventies and during the eighties it became transparent that 
many of these consistency results could all be derived by a short list of 
set theoretic principles, which are known in the literature as forcing axioms.
These axioms gave set theorists and mathematicians a very powerful tool to 
obtain independence results:
for any given mathematical problem we are most likely able to compute its (possibly different)
solutions 
in the constructible universe $L$ and in models of strong forcing axioms.
These axioms settle basic problems in 
cardinal arithmetic like the size of the continuum and the singular 
cardinal problem (see among others the works of Foreman, 
Magidor, Shelah~\cite{foreman_magidor_shelah}, 
Veli\v{c}kovi\'c~\cite{VEL92}, Todorcevic~\cite{tod02}, 
Moore~\cite{moore.MRP}, Caicedo and Veli\v{c}kovi\'c~\cite{caivel06}, 
and the author~\cite{VIA08}), as well as combinatorially 
complicated ones like the basis problem for uncountable linear 
orders (see Moore's result~\cite{moore.basis} which extends 
previous work of Baumgartner~\cite{bau73}, Shelah~\cite{she76}, 
Todorcevic~\cite{tod98}, and others).
Interesting problems originating from other fields of mathematics and apparently 
unrelated to set theory have also been settled by appealing to forcing axioms, as is the case 
(to cite two of the most prominent examples)
for Shelah's
results~\cite{SHE74} on Whitehead's problem in group theory and Farah's result~\cite{FAR11}
on the non-existence of outer automorphisms of the Calkin algebra in operator algebra.
Forcing axioms assert that for a large class of compact topological spaces $X$
Baire's category theorem can be strengthened to the statement that any family of
$\aleph_1$-many dense open subsets of $X$ has non empty intersection.
In light of the success these axioms have met in solving problems
 a convinced platonist may start to argue that these principles
may actually give a ``complete'' theory of a suitable fragment of the universe of sets. 
However, it is not clear how one could formulate such a  result.
The aim of this paper is to explain in which sense we can show that
forcing axioms can give such a 
``complete'' theory and why they are so ``useful''. 

Section~\ref{sec:ACBCT} starts by showing that two basic non-constructive 
principles which play a crucial role in the foundations of many mathematical theories, the axiom of choice
and Baire's category theorem, can both be formulated as specific instances of forcing axioms.
In section~\ref{sec:LCFA} we also argue that many large cardinal axioms can be reformulated in the language of
partial orders as specific instances of a more general kind of forcing axiom. Sections~\ref{sec:LTGA} 
and~\ref{sec:CABCT} show that
Shoenfield's absoluteness for $\Sigma^1_2$-properties and \L o\'s Theorem for ultraproducts of first order models
are two sides of the same coin: 
recast in the language of boolean valued models, Shoenfield's absoluteness shows
that there is a more general notion of boolean ultrapower 
(of which the standard ultrapowers encompassed by \L o\'s Theorem are just special cases) and that 
in the specific case in which one takes a boolean ultrapower of a compact, second countable space $X$, 
the natural embedding of $X$ in its boolean ultrapower
is at least $\Sigma_2$-elementary. Section~\ref{sec:MMWGA} 
embarks on a rough  analysis of what is a maximal forcing axiom. We are led by two driving observations, one rooted
in topological considerations and the other in model-theoretic arguments.
First of all we outline how Woodin's generic absoluteness results for  $L(\Ord^{\omega})$
entail that in the presence of large cardinals 
the natural embeddings of a separable compact Hausdorff space $X$ in its boolean ultrapowers
are not only $\Sigma_2$-elementary but fully elementary. 
We then present other recent results by the author, with Asper\'o \cite{VIAASP} and with Audrito \cite{VIAAUD14}
which show that, 
in the presence of natural 
strengthenings of Martin's maximum or of the proper forcing axiom, an exact analogue of Woodin's 
generic absoluteness result can be established also at the level of the Chang model $L(\Ord^{\omega_1})$
and/or for the first order theory of $H_{\aleph_2}$.
The main question left open is whether these generic absoluteness results are specific to the
Chang models $L(\Ord^{\omega_i})$  for $i=0,1$ or
can be replicated also for other cardinals.
The paper is meant to be accessible to a wide audience of mathematicians; specifically the first two sections 
do not require any special familiarity with logic or set theory other than some basic cardinal arithmetic. 
The third section requires a certain familiarity with first order logic and the basic model theoretic 
constructions of ultraproducts. The fourth and fifth sections, on the other hand, presume the reader has 
some familiarity with the forcing method. 

\newpage
\tableofcontents

\newpage
\section{The axiom of choice and Baire's category theorem as forcing axioms} \label{sec:ACBCT}

The axiom of choice $\mathsf{AC}$ and Baire's category theorem $\mathsf{BCT}$
are non-constuctive principles which 
play a prominent role in the development of many fields of abstract mathematics.
%
%
Standard formulations of the axiom of choice and of Baire's category theorem are the following:
\begin{definition}
$\mathsf{AC}\equiv\,\,$ 
$\prod_{i\in I}A_i$ is non-empty for all families of non empty sets $\bp{A_i:i\in I}$, i.e. there is a choice function
$f:I\to\bigcup_{i\in I} A_i$ such that $f(i)\in A_i$ for all $i\in I$.
\end{definition}

\begin{theorem}\label{thm:BCTtopsp}
$\mathsf{BCT}_0\equiv$ For all compact Hausdorff spaces $(X,\tau)$ and all countable families
$\bp{A_n:n\in\mathbb{N}}$ of dense open subsets of $X$, $\bigcap_{n\in\mathbb{N}}A_n$ is non-empty.
\end{theorem}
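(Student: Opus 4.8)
The plan is to read the displayed ``$\mathsf{BCT}_0\equiv\dots$'' as the assertion that the forcing/filter formulation of Baire's theorem (the existence of a filter meeting any countably many dense subsets of a poset) is equivalent to the topological statement about compact Hausdorff spaces, and to prove this equivalence by setting up a dictionary between a compact Hausdorff space and its lattice of nonempty open sets, equivalently its Boolean (regular open) completion. Under this dictionary ``dense open subset of $X$'' corresponds to ``dense subset of the poset'' and ``point of $\bigcap_n A_n$'' corresponds to ``filter meeting the associated dense sets'', so both implications become essentially translations through the dictionary rather than new constructions. This is exactly the ``basic partial order theory'' route advertised in the abstract, and it lets me treat the two directions uniformly.

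For the implication from $\mathsf{BCT}_0$ to the topological statement I would fix a compact Hausdorff $(X,\tau)$ together with dense open sets $A_n$, and work in the poset $P=\tau\setminus\{\emptyset\}$ ordered by inclusion. Using that a compact Hausdorff space is regular, each $D_n=\{U\in P:\overline U\subseteq A_n\}$ is dense in $P$: given nonempty open $V$, density of $A_n$ supplies a point of $V\cap A_n$ and regularity a nonempty open $U$ with $\overline U\subseteq V\cap A_n$. Applying $\mathsf{BCT}_0$ to $\{D_n:n\in\mathbb N\}$ produces a filter $G\subseteq P$ meeting every $D_n$. Since finitely many members of $G$ have a common nonempty open refinement, the closed sets $\{\overline U:U\in G\}$ have the finite intersection property, so compactness yields a point $x$ in their intersection; choosing $U_n\in G\cap D_n$ gives $x\in\overline{U_n}\subseteq A_n$ for every $n$, whence $x\in\bigcap_n A_n$. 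Here compactness plays the role that completeness plays in the usual nested-sets proof.

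For the converse I would start from an arbitrary poset $P$ with dense sets $D_n$, pass to its Boolean completion $\mathbb B=\regopen{P}$ with canonical dense embedding $e\colon P\to\mathbb B\setminus\{0\}$, and take $X=\St(\mathbb B)$, the Stone space of ultrafilters, which is compact Hausdorff. Writing $N_b$ for the clopen set determined by $b\in\mathbb B$, I would set $A_n=\bigcup_{q\in D_n}N_{e(q)}$ and check that density of $D_n$ in $P$ makes $A_n$ dense open in $X$. The topological statement then yields a point of $\bigcap_n A_n$, i.e.\ an ultrafilter $x$ on $\mathbb B$ with $e(q_n)\in x$ for some $q_n\in D_n$, and pulling back along $e$ gives a filter on $P$ meeting every $D_n$, which is $\mathsf{BCT}_0$. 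I expect the main obstacle to lie in this second direction: one must verify that passing to $\regopen{P}$ faithfully represents the poset (the Stone representation, hence a prime ideal principle, enters here) and that the pulled-back object is genuinely a filter of the kind $\mathsf{BCT}_0$ demands --- matching the upward-closed, finitely-compatible object read off from a point of the Stone space with the directed filter of the forcing formulation is the one place where a small additional argument, or an appeal to a Rasiowa--Sikorski style recursion, is needed. The first direction, by contrast, is essentially the classical Baire argument repackaged.
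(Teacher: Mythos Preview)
You have misread the symbol $\equiv$: it is being used to \emph{name} the displayed assertion, not to state an equivalence. Theorem~\ref{thm:BCTtopsp} is simply Baire's category theorem for compact Hausdorff spaces, and the paper calls this statement $\mathsf{BCT}_0$; the poset/filter formulation you have in mind is introduced separately as $\mathsf{BCT}_1$ in Lemma~\ref{lem:BCTposets}. So only one implication is at stake, and your labelling of the two versions is swapped relative to the paper's.

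Once this is sorted out, your ``first direction'' is precisely the paper's argument: pass to the poset $(\tau\setminus\{\emptyset\},\subseteq)$, take the dense sets $\{U:\overline U\subseteq A_n\}$ (your appeal to regularity of compact Hausdorff spaces to verify density is exactly the detail the paper leaves as ``easily checked''), apply the poset version to obtain a filter $G$, and use compactness on $\{\overline U:U\in G\}$ to extract a point of $\bigcap_n A_n$. The paper precedes this by proving $\mathsf{BCT}_1$ via the obvious descending-chain construction, which you simply assume. Your converse direction through $\RO(P)$ and its Stone space is not part of this theorem's proof; the paper only remarks, without argument, that $\mathsf{BCT}_0\Rightarrow\mathsf{BCT}_1$ holds modulo the prime ideal theorem, so on that point your sketch actually goes further than the paper does (and your own caveat about needing a prime-ideal principle there is on the mark).
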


There are large numbers of equivalent formulations of the axiom of choice and it may come as a surprise that one 
of these is a natural generalization of Baire's category theorem and naturally leads to the notion of forcing axiom.

\begin{definition}
$(P,\leq)$ is a \emph{partial order} if $\leq$ is a reflexive and transitive relation on $P$.
\end{definition}


\begin{notation}
Given a partial order $(P,\leq)$, 
\[
\uparrow A=\bp{p\in P:\exists q\in A: q\leq p}
\]
denotes the \emph{upward closure} of $A$ and similarly $\downarrow A$ will denote its \emph{downward closure}.
\begin{itemize}
\item
$A\subseteq P$ is \emph{open} if it is a downward closed subset of $P$.
\item
The \emph{order topology} $\tau_P$ on $P$ is given by the downward closed subsets of $P$.
\item
$D$ is \emph{dense} if for all
$p\in P$ there is some $q\in A$ refining $p$ ($q$ refines $p$ if $q\leq p$),
\item
$G\subseteq P$ is a \emph{filter} if it is upward closed and all $q,p\in G$ have a common refinement $r\in G$.
\item 
$p$ is \emph{incompatible} with $q$ ($p\perp q$) if no $r\in P$ refines both $p$ and $q$.
\item
$X$ is a \emph{predense} subset of $P$ if $\downarrow X$ is open dense in $P$.
\item
$X$ is an \emph{antichain} of $P$ if it is composed of pairwise incompatible elements, and a maximal one if it is also predense.
\item
$X$ is a \emph{chain} of $P$ if $\leq$ is a total order on $X$.
 \end{itemize}
\end{notation}

The terminology for open and dense subsets of $P$ comes from the observation that the collection $\tau_P$ 
of downward closed subsets of $P$ is a topology on the space of points 
$P$ (though in general not a Hausdorff one), whose dense sets are exactly those satisfying the above property.
Notice also that the downward closure of a dense set is open dense in this topology.

A simple proof of the Baire
Category Theorem is given by a basic enumeration argument
(which however needs some amount of the axiom of choice to be carried):
\begin{lemma}\label{lem:BCTposets}
$\mathsf{BCT}_1\equiv$
Let $(P,\leq)$ be a partial order and $\bp{D_n:n\in\mathbb{N}}$ be a family of predense subsets of $P$.
Then there is a filter
$G\subseteq P$ meeting all the sets $D_n$.
\end{lemma}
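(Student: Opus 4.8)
The plan is to establish $\mathsf{BCT}_1$ directly by the ``basic enumeration argument'' alluded to above, working in $\ZFC$; in fact $\ZF$ together with the principle of dependent choices $\mathsf{DC}$ is all that is used, which is the precise sense in which ``some amount of the axiom of choice'' is needed. One may assume $P\neq\emptyset$, the empty poset being a degenerate case. The first step is to unwind the hypothesis: since each $D_n$ is predense, $\downarrow D_n$ is open dense, so for every $p\in P$ there are $q\leq p$ and $d\in D_n$ with $q\leq d$. It matters here that $q$ sits below an actual member $d$ of $D_n$, and not merely below some element of $\downarrow D_n$; this is what will make the resulting filter meet $D_n$ itself rather than only $\downarrow D_n$.

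Next I would fix $p_0\in P$ arbitrarily and build recursively a descending sequence $p_0\geq p_1\geq p_2\geq\cdots$ in $P$ together with witnesses $d_n\in D_n$ such that $p_{n+1}\leq d_n$: given $p_n$, apply density of $\downarrow D_n$ to $p_n$ to obtain $p_{n+1}\leq p_n$ and $d_n\in D_n$ with $p_{n+1}\leq d_n$. This recursion is exactly where $\mathsf{DC}$ enters, as each step picks $p_{n+1}$ and $d_n$ as a function of $p_n$. I would then set $G=\uparrow\bp{p_n:n\in\mathbb{N}}$ and check that it works: $G$ is upward closed by construction; if $a,b\in G$, choose $n,m$ with $p_n\leq a$ and $p_m\leq b$, and then $p_{\max(n,m)}$ refines both $p_n$ and $p_m$, hence both $a$ and $b$, and it lies in $G$, so $G$ is a filter; and for each $n$ we have $d_n\geq p_{n+1}\in G$, so $d_n\in G\cap D_n$, i.e.\ $G$ meets every $D_n$.

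I do not expect a genuine obstacle here, the argument being entirely elementary; the only points deserving care are the two already flagged, namely pinning down precisely the countable sequence of dependent choices that the construction uses --- worth making explicit, since the non-constructive content of such principles is one of the themes of the paper --- and exploiting predensity to descend below a genuine member of each $D_n$. I would close with a remark recording that $\mathsf{BCT}_1$ at once yields $\mathsf{BCT}_0$ of Theorem~\ref{thm:BCTtopsp}: given a compact Hausdorff space $X$ and dense open sets $A_n$, apply $\mathsf{BCT}_1$ to the poset of nonempty open subsets of $X$ ordered by inclusion, taking $D_n$ to be the collection of nonempty open $U$ with $\overline{U}\subseteq A_n$, which is open dense by regularity of $X$; since $G$ is a filter, the closures of its members form a family of nonempty closed subsets of $X$ with the finite intersection property, so by compactness $\bigcap_{U\in G}\overline{U}$ is non-empty, and any point of it lies in $\bigcap_{n}A_n$ because $G$ meets each $D_n$.
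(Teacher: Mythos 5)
Your argument is correct and is essentially the paper's own proof: build a decreasing chain entering (the downward closure of) each $D_n$ in turn, take its upward closure as the filter, and note that upward closedness forces the filter to contain an actual member of each $D_n$. The extra care you take in exhibiting the witnesses $d_n\in D_n$, in verifying the filter properties, and in flagging the use of dependent choices only makes explicit what the paper leaves implicit.
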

\begin{proof}
Build by induction a decreasing chain $\bp{p_n:n\in\mathbb{N}}$ with
$p_{n}\in {\downarrow}\,D_n$ and $p_{n+1}\leq p_n$ for all $n$.
Let $G={\uparrow}\bp{p_n:n\in\mathbb{N}}$. Then $G$ meets all the $D_n$.
\end{proof}

Baire's category theorem can be proved from the above Lemma 
(without any use of the axiom of choice)
as follows:
\begin{proof}[Proof of $\mathsf{BCT}_0$ from $\mathsf{BCT}_1$]
Given a compact Hausdorff space $(X,\tau)$ and a family of dense open sets
$\bp{D_n:n\in\mathbb{N}}$ of $X$, consider the partial order $(\tau\setminus\bp{\emptyset},\subseteq)$
and the family $E_n=\bp{A\in \tau: \Cl{A}\subseteq D_n}$.
Then it is easily checked that each $E_n$ is dense open in the order topology induced by the partial order  
$(\tau\setminus\bp{\emptyset},\subseteq)$.
By Lemma~\ref{lem:BCTposets}, 
we can find a filter $G\subseteq \tau\setminus\bp{\emptyset}$ meeting all the sets $E_n$.
This gives that for all $A_1,\dots A_n\in G$
\[
\Cl{A_1}\cap \ldots\cap\Cl{A_n}\supseteq A_1\cap \ldots\cap A_n\supseteq B\neq\emptyset
\]
for some $B\in G$ (where $\Cl{A}$ is the closure of $A\subseteq X$ in the topology $\tau$.)
By the compactness of $(X,\tau)$,
\[
\bigcap\bp{\Cl{A}:A\in G}\neq\emptyset.
\]
Any point in this intersection belongs to the intersection of all the open sets $D_n$.
\end{proof}
Notice the interplay between the order topology on the partial order $(\tau\setminus\bp{\emptyset},\subseteq)$
and the compact topology $\tau$ on $X$. Modulo the prime ideal theorem (a weak form of the axiom of choice),
$\mathsf{BCT}_1$ can also be proved from 
$\mathsf{BCT}_0$. 

It is less well-known that the axiom of choice has also an equivalent formulation as the
existence of filters on posets meeting sufficiently many dense sets.
In order to proceed further, we need to introduce the standard notion of forcing axiom.
\begin{definition}\label{def:forcaxiom}
Let $\kappa$ be a cardinal and $(P,\leq)$ be a partial order.
\begin{quote}
$\FA_\kappa(P)\equiv$ \emph{For all families $\bp{D_\alpha:\alpha<\kappa}$ of predense
subsets of $P$, there is a filter $G$ on $P$ meeting all these predense sets.}
\end{quote}
Given a class $\Gamma$ of partial orders 
$\FA_\kappa(\Gamma)$ holds if 
$\FA_\kappa(P)$ holds for all $P\in \Gamma$. 
\end{definition}

\begin{definition}\label{def:<lambdaclos}
Let $\lambda$ be a cardinal.
A partial order $(P,\leq)$ is \emph{$<\lambda$-closed} if every decreasing
chain $\bp{P_\alpha:\alpha<\gamma}$ indexed by some $\gamma<\lambda$ has a lower bound in $P$. 

$\Gamma_\lambda$ denotes the class of $<\lambda$-closed posets.
$\Omega_\lambda$ denotes the class of posets $P$ for which $\FA_\lambda(P)$ holds.
\end{definition}

It is almost immediate to check that $\Gamma_{\aleph_0}$ is the class of all posets, and that
$\mathsf{BCT}_1$ states that $\Omega_{\aleph_0}=\Gamma_{\aleph_0}$.
The following formulation of the axiom of choice in terms of forcing axioms
was handed to me by Todorcevic, I'm not aware of 
any published reference. In what follows, let $\ZF$ denote the standard first order axiomatization 
of set theory in the first order language $\bp{\in,=}$
(excluding the axiom of choice) and $\ZFC$ denote $\ZF+$ the first order formalization of the axiom of choice.
\begin{theorem}\label{thm:forcax-AC-BCT}
The axiom of choice 
$\mathsf{AC}$ is equivalent (over the theory $\mathsf{ZF}$)
to the assertion that $\FA_\kappa(\Gamma_\kappa)$ holds for all regular
cardinals $\kappa$.
\end{theorem}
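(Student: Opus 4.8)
The plan is to prove both directions. For the easy direction, assume $\mathsf{AC}$ and fix a regular cardinal $\kappa$ together with a $<\kappa$-closed poset $P$ and a family $\bp{D_\alpha:\alpha<\kappa}$ of predense subsets. Using $\mathsf{AC}$ I would build a decreasing chain $\bp{p_\alpha:\alpha<\kappa}$ with $p_\alpha\in{\downarrow}\,D_\alpha$ by transfinite recursion: at successor stages pick a refinement of $p_\alpha$ lying in ${\downarrow}\,D_{\alpha+1}$ (possible by predensity, choosing with $\mathsf{AC}$), and at limit stages $\gamma<\kappa$ use $<\kappa$-closure to obtain a lower bound of $\bp{p_\beta:\beta<\gamma}$ and then refine it into ${\downarrow}\,D_\gamma$. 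Then $G={\uparrow}\bp{p_\alpha:\alpha<\kappa}$ is a filter meeting every $D_\alpha$. This mirrors exactly the enumeration argument of Lemma~\ref{lem:BCTposets}, just carried out transfinitely, and regularity of $\kappa$ is what keeps the construction going through all limit stages below $\kappa$.

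For the converse — the interesting direction — I would assume $\FA_\kappa(\Gamma_\kappa)$ for all regular $\kappa$ and derive a choice principle, say the well-ordering theorem or the statement that every set of nonempty sets has a choice function. Let $\bp{A_i:i\in I}$ be a family of nonempty sets. The natural move is to consider the poset $P$ of all partial choice functions $p$ with $\dom p\subseteq I$ finite (or, more robustly, of size $<\kappa$ for a suitably large regular $\kappa$) and $p(i)\in A_i$, ordered by reverse inclusion; this $P$ is $<\kappa$-closed because the union of a decreasing $<\kappa$-chain of partial choice functions of size $<\kappa$ is again one (here regularity of $\kappa$ ensures the union still has size $<\kappa$). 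For each $i\in I$ the set $D_i=\bp{p\in P:i\in\dom p}$ is dense — given any $p$, if $i\notin\dom p$ just extend $p$ by sending $i$ to \emph{some} element of $A_i$. The subtlety is that asserting $D_i$ is \emph{nonempty}, let alone dense, already seems to require choosing an element of $A_i$; the point is that one only needs, for the \emph{definition} of density, the bare existential "there exists a refinement", which is available from the nonemptiness of each single $A_i$ in $\ZF$ without a simultaneous choice. Applying $\FA_\kappa(P)$ to $\bp{D_i:i\in I}$ (taking $\kappa$ regular with $\kappa>|I|$, which exists in $\ZF$) yields a filter $G$ meeting each $D_i$; then $\bigcup G$ is a function (any two conditions in $G$ are compatible, hence agree on common domain) with domain $I$ and $(\bigcup G)(i)\in A_i$, i.e. a choice function.

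I expect the main obstacle to be precisely this last, delicate point: making sure that every step — the nonemptiness and density of the $D_i$, the $<\kappa$-closure of $P$, the existence of a suitable regular $\kappa$ — is carried out in pure $\ZF$ without smuggling in choice, since the whole content of the theorem is that $\mathsf{AC}$ is being \emph{produced}, not assumed. A secondary technical point is choosing the cardinal parameter correctly: one wants a single regular $\kappa$ large enough that $P$ genuinely lies in $\Gamma_\kappa$ and the family of dense sets has size $\le\kappa$; taking $\kappa$ to be the successor of $|I|$ (or of $|\trcl(\bp{A_i:i\in I})|$) works, and its regularity is provable in $\ZF$. One should also double-check that the finite-support version of $P$ is already enough — it is, since each $D_i$ is still dense for finite partial functions — which keeps $<\kappa$-closure trivial for $\kappa=\aleph_0$ only if $I$ is finite, so in general the $<\kappa$-support version with $\kappa>|I|$ is the safe choice and is what makes the hypothesis "$\FA_\kappa(\Gamma_\kappa)$ for \emph{all} regular $\kappa$" actually get used.
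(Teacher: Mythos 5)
Your forward direction is essentially the paper's: the paper routes it through the principle $\mathsf{DC}_\kappa$ of $\kappa$-dependent choices (which is exactly the formal license for your ``transfinite recursion with a choice at each step''), but the construction of the descending $\kappa$-chain using $<\kappa$-closure at limits is the same. The problem is the converse. Your strategy --- force with $<\kappa$-sized partial choice functions on $I$ and meet the dense sets $D_i=\bp{p:i\in\dom p}$ --- can only be run if the family $\bp{D_i:i\in I}$ can be enumerated in order type $\kappa$ for some regular $\kappa$, since $\FA_\kappa$ speaks only of $\kappa$-indexed families of predense sets. That presupposes that $I$ is well-orderable, which is essentially what you are trying to prove. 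At best this argument yields choice for well-orderable families of non-empty sets, and that is known to be strictly weaker than $\mathsf{AC}$ over $\ZF$. A second, independent error is the claim that the regularity of the successor of $|I|$ is provable in $\ZF$: it is not (in the Feferman--L\'evy model $\aleph_1$ has cofinality $\omega$, and by Gitik it is consistent with $\ZF$ that every uncountable aleph is singular), so even for well-orderable $I$ you cannot simply point at $|I|^+$ as your regular $\kappa$. There is also a smaller issue that the $<\kappa$-closure of your poset requires unions of $<\kappa$-chains of $<\kappa$-sized subsets of a possibly non-well-orderable $I$ to remain $<\kappa$-sized, which again needs choice.

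The paper avoids all three problems by changing the target: it proves by induction on $\kappa$ that, over $\ZF+\forall\lambda<\kappa\,\mathsf{DC}_\lambda$, the axiom $\FA_\kappa(\Gamma_\kappa)$ is equivalent to $\mathsf{DC}_\kappa$, and then invokes the separate $\ZF$-theorem that $\mathsf{AC}$ is equivalent to $\forall\kappa\,\mathsf{DC}_\kappa$. The poset there consists of sequences $s\in X^{<\kappa}$ with $s(\alpha)\in F(s\restriction\alpha)$, ordered by end-extension: its conditions have ordinal domains, so the dense sets $D_\alpha=\bp{s:\alpha\in\dom(s)}$ are indexed by $\kappa$ by design, $<\kappa$-closure is immediate from regularity with no choice, and the non-well-orderable choices are deferred to the (nontrivial but standard) equivalence of $\mathsf{AC}$ with $\forall\kappa\,\mathsf{DC}_\kappa$. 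The induction also explains why restricting to regular $\kappa$ in the statement is harmless: for singular $\kappa$, $\mathsf{DC}_\kappa$ already follows from $\forall\lambda<\kappa\,\mathsf{DC}_\lambda$, so the hypothesis is only ever needed at regular cardinals. To repair your proof you would need to replace its second half by an argument of this kind (derive a well-ordering principle or $\mathsf{DC}_\kappa$ from the forcing axioms, rather than a choice function for an arbitrary $I$-indexed family directly).
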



We sketch a proof of Theorem~\ref{thm:forcax-AC-BCT}, the interested reader can find a full proof in 
\cite[Chapter 3, Section 2]{parente-tesi}
(see the following hyperlink:~\emph{\href{http://www.logicatorino.altervista.org/matteo_viale/thesis-parente.pdf}{Tesi-Parente}}).
First of all, it is convenient to prove~\ref{thm:forcax-AC-BCT} using a different equivalent formulation
of the axiom of choice.
\begin{definition} 
Let $\kappa$ be an infinite cardinal. 
The \emph{principle of dependent choices} $\mathsf{DC}_\kappa$ 
states the following: 
\begin{center}
For every non-empty set $X$ and every function $F\colon X^{<\kappa}\to\pow{X}\setminus\{\emptyset\}$, 
there exists $g\colon\kappa\to X$ such that $g(\alpha)\in F(g\restriction\alpha)$ for all $\alpha<\kappa$.
\end{center}
\end{definition}

\begin{lemma}
$\mathsf{AC}$ is equivalent to $\forall\kappa\, \mathsf{DC}_\kappa$ modulo $\ZF$.
\end{lemma}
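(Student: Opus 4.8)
The plan is to prove the equivalence $\mathsf{AC}\Leftrightarrow\forall\kappa\,\mathsf{DC}_\kappa$ over $\ZF$, and the main work lies entirely in the direction $\mathsf{AC}\Rightarrow\forall\kappa\,\mathsf{DC}_\kappa$, since the converse is easy (take $\kappa=\omega$ and derive the axiom of countable choice? — no, that is too weak; instead, as I explain below, we derive full $\mathsf{AC}$ from a single well-chosen instance). Let me organize it as two lemmas-within-a-proof.

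\begin{proof}
\textbf{($\mathsf{AC}\Rightarrow\forall\kappa\,\mathsf{DC}_\kappa$).} Fix a cardinal $\kappa$, a non-empty set $X$ and a map $F\colon X^{<\kappa}\to\pow{X}\setminus\{\emptyset\}$. Using $\mathsf{AC}$, choose once and for all a function $c\colon\pow{X}\setminus\{\emptyset\}\to X$ with $c(A)\in A$ for every non-empty $A\subseteq X$. Now define $g\colon\kappa\to X$ by transfinite recursion on $\alpha<\kappa$: having defined $g\restriction\alpha\in X^{<\kappa}$, set $g(\alpha)=c\big(F(g\restriction\alpha)\big)$. At each stage $g\restriction\alpha$ is a genuine element of the domain of $F$ (here one uses the Replacement schema to see $g\restriction\alpha$ is a set), so $F(g\restriction\alpha)\neq\emptyset$ and the recursion goes through; by the Recursion Theorem (provable in $\ZF$) the resulting $g$ is a well-defined function on $\kappa$, and by construction $g(\alpha)=c(F(g\restriction\alpha))\in F(g\restriction\alpha)$ for all $\alpha<\kappa$. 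Thus $\mathsf{DC}_\kappa$ holds. Note that no choices are made during the recursion — the single choice function $c$ is fixed in advance — which is exactly why this argument is legitimate in $\ZF+\mathsf{AC}$ and would fail if we only had, say, countable choice.

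\textbf{($\forall\kappa\,\mathsf{DC}_\kappa\Rightarrow\mathsf{AC}$).} It suffices to show that every set can be well-ordered, or equivalently (the formulation I prefer) that for every set $Y$ there is no function from an ordinal onto a proper class; concretely, fix a set $Y$ and let $\kappa$ be a cardinal with $\kappa\geq|\trcl(\{Y\})|$ — more carefully, by Hartogs' theorem (a $\ZF$ result) let $\kappa$ be an ordinal that does not inject into $\pow{Y}$. Apply $\mathsf{DC}_\kappa$ with $X=Y$ and with $F\colon Y^{<\kappa}\to\pow{Y}\setminus\{\emptyset\}$ defined by
\[
F(s)=\begin{cases} Y\setminus\ran(s) & \text{if }\ran(s)\subsetneq Y,\\ Y & \text{otherwise.}\end{cases}
\]
This $F$ is defined outright, without any appeal to choice. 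We obtain $g\colon\kappa\to Y$ with $g(\alpha)\in F(g\restriction\alpha)$ for all $\alpha<\kappa$. If $\ran(g\restriction\alpha)$ were a proper subset of $Y$ for every $\alpha<\kappa$, then $g$ would be injective (each $g(\alpha)\notin\ran(g\restriction\alpha)$), giving an injection $\kappa\hookrightarrow Y\subseteq\pow{Y}$, contradicting the choice of $\kappa$. Hence there is a least $\alpha_0<\kappa$ with $\ran(g\restriction\alpha_0)=Y$, and then $g\restriction\alpha_0\colon\alpha_0\to Y$ is a surjection with $g\restriction\alpha_0$ injective below $\alpha_0$ — i.e. a bijection between the ordinal $\alpha_0$ and $Y$. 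So $Y$ is well-orderable, and since $Y$ was arbitrary, the well-ordering theorem holds, whence $\mathsf{AC}$ holds (over $\ZF$).
\end{proof}

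The step I expect to be the only real subtlety — and the one worth double-checking — is the legitimacy of the transfinite recursion in the forward direction: one must be sure that $g\restriction\alpha$ is a set lying in $\dom(F)=X^{<\kappa}$ at every stage, which is where Replacement enters, and that the Recursion Theorem is being applied to a class function defined by the rule $s\mapsto c(F(s))$ on all of $X^{<\kappa}$ (not just along the branch), so that the usual $\ZF$ recursion theorem applies verbatim. Everything else is bookkeeping: the backward direction's only non-obvious ingredient is Hartogs' theorem to produce an ordinal $\kappa$ too large to inject into $Y$, and that is a standard choice-free fact.
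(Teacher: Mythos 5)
Your proof is correct. Note that the paper itself does not prove this lemma --- it only cites Parente's thesis for it --- so there is no in-paper argument to compare against; your two directions are the standard ones (a single pre-chosen choice function $c$ fed into the $\ZF$ transfinite recursion theorem for $\mathsf{AC}\Rightarrow\mathsf{DC}_\kappa$, and Hartogs plus an exhaustion argument to recover the well-ordering theorem for the converse), and both go through. Two trivial blemishes worth fixing: the inclusion ``$Y\subseteq\pow{Y}$'' is literally false --- what you need is the injection $y\mapsto\{y\}$ of $Y$ into $\pow{Y}$, or more simply you can take $\kappa$ to be the Hartogs number of $Y$ itself, which is already an initial ordinal and hence a cardinal as the definition of $\mathsf{DC}_\kappa$ requires; and the case $Y=\emptyset$ must be set aside separately (it is trivially well-orderable), since $\mathsf{DC}_\kappa$ is only stated for non-empty $X$. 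With those cosmetic repairs the argument is complete: the least $\alpha_0$ with $\ran(g\restriction\alpha_0)=Y$ exists by the Hartogs contradiction, and minimality of $\alpha_0$ gives injectivity of $g\restriction\alpha_0$ exactly as you say.
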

The reader can find a proof in \cite[Theorem 3.2.3]{parente-tesi}.
We prove the Theorem assuming the Lemma:
\begin{proof}[Proof of Theorem \ref{thm:forcax-AC-BCT}.]
We prove by induction on $\kappa$ 
that $\mathsf{DC}_\kappa$ is equivalent to 
$\FA_{\kappa}(\Gamma_{\kappa})$ over the theory $\mathsf{ZF}+\forall\lambda<\kappa\,\mathsf{DC}_\lambda$.
We sketch the ideas for the case $\kappa$-regular\footnote{In this case the assumption 
$\forall\lambda<\kappa\,\mathsf{DC}_\lambda$ is not needed, but all the relevant ideas in the 
proof of the equivalence are already present.}:

Assume $\mathsf{DC}_{\kappa}$; we prove (in $\ZF$) that $\mathsf{FA}_{\kappa}(\Gamma_\kappa)$ holds. 
Let $(P,\leq)$ be a ${<}\kappa$-closed partially ordered set, and 
$\bp{D_\alpha : \alpha<\kappa}\subseteq\pow{P}$ a family of predense subsets of $P$. 

Given a sequence $\ap{p_\beta : \beta<\alpha}$ call $\xi_{\vec{p}}$ the least $\xi$ such that 
$\ap{p_\beta : \xi\leq\beta<\alpha}$ is a decreasing chain if such a $\xi$ exists, and fix 
$\xi_{\vec{p}}=\alpha$ otherwise.
Notice that when the length $\alpha$ of $\vec{p}$ is successor then $\xi_{\vec{p}}<\alpha$.


We now define a function 
$F\colon P^{<\kappa}\to\pow{P}\setminus\{\emptyset\}$ as follows: given $\alpha<\kappa$ and a sequence
$\vec{p}\in P^{<\kappa}$, 
\[
F(\vec{p})=\begin{cases}
\{p_0\}&\mbox{if }\xi_{\vec{p}}=\alpha\\
\bp{d\in {\downarrow}\,D_{\alpha} : d\le p_\beta\text{ for all } \xi_{\vec{p}}\leq\beta<\alpha}&\mbox{otherwise.}
\end{cases}
\]
The latter set is non-empty since $(P,\leq)$ is ${<}\kappa$-closed, $\alpha<\kappa$, 
and $D_\alpha$ is predense. By $\mathsf{DC}_\kappa$, 
we find $g\colon \kappa\to P$ such that $g(\alpha)\in F(g\restriction\alpha)$ for all $\alpha<\kappa$. 
An easy induction shows that for all $\alpha$ the sequence $g\restriction\alpha$ is decreasing, so
$g(\alpha)\in {\downarrow} D_{\alpha}$ for all $\alpha<\kappa$.
Then 
\[G=\bp{p\in P : \text{there exists }\alpha<\kappa\text{ such that }g(\alpha)\le p }\] 
is a filter on $P$, 
such that $G\cap D_\beta\neq\emptyset$ for all $\beta<\kappa$.

Conversely, assume $\FA_\kappa(\Gamma_\kappa)$,
we prove (in $\ZF$) that $\mathsf{DC}_\kappa$ holds. 

Let $X$ be a non-empty set and $F\colon X^{<\kappa}\to\pow{X}\setminus\{\emptyset\}$. 
Define the partially ordered set
\[
P=\bp{s\in X^{<\kappa} : \text{for all }\alpha\in\dom(s),\ s(\alpha)\in F(s\restriction\alpha)},
\]
with $s\le t$ if and only if $t\subseteq s$. Let $\lambda<\kappa$ and let $s_0\ge s_1\ge\dots\ge s_\alpha\ge\dots$, 
for $\alpha<\lambda$, be a chain in $P$. Then $\bigcup_{\alpha<\lambda}s_\alpha$ is clearly a lower bound for 
the chain. Since $\kappa$ is regular, we have $\bigcup_{\alpha<\lambda}s_\alpha\in P$ and so $P$ is 
${<}\kappa$-closed. For every $\alpha<\kappa$, define
\[
D_\alpha=\bp{s\in P : \alpha\in\dom(s)},
\]
and note that $D_\alpha$ is dense in $P$. Using $\FA_\kappa(\Gamma_\kappa)$, there exists a filter 
$G\subset P$ such that $G\cap D_\alpha\neq\emptyset$ for all $\alpha<\kappa$. Then 
$g=\bigcup G$ is a function $g\colon\kappa\to X$ such that $g(\alpha)\in F(g\restriction\alpha)$ for all 
$\alpha<\kappa$. 
\end{proof}

\section{Large cardinals as forcing axioms}\label{sec:LCFA}

From now on, we focus on boolean algebras rather than posets.
 
 \subsection{A fast briefing on boolean algebras}
 \begin{definition}
 A \emph{boolean algebra} $\bool{B}$ is a boolean ring i.e. a ring in which every element is idempotent.
 Equivalently a boolean algebra is a complemented distributive lattice $(\bool{B},\wedge,\vee,\neg,0,1)$
 (see \cite{GIVANTHALMOS}). 
 \end{definition}
  
 \begin{notation}
Given a boolean algebra $(\bool{B},\wedge,\vee,\neg,0,1)$, the poset $(\bool{B}^+;\leq_{\bool{B}})$ 
is given by its non-zero elements, with order relation given by $b\leq_{\bool{B}} q$ iff 
$b\wedge q=b$ iff $b\vee q=q$.

A boolean ring $(\bool{B},+,\cdot,0,1)$ has a natural structure of complemented distributive lattice
$(\bool{B},\wedge,\vee,\neg,0,1)$,
for which the sum on the boolean ring becomes the operation
 $\Delta$ of symmetric difference ($a\Delta b=a\vee b\wedge (\neg(a\wedge b))$) on the
 complemented distributive lattice, and the multiplication of the ring the operation $\wedge$.

We refer to filters, antichains, dense sets, predense sets, open sets on $\bool{B}$, meaning 
that these notions are declined for the corresponding partial order $(\bool{B}^+;\leq_{\bool{B}})$. 

We also recall the following:
\begin{itemize}
\item
An \emph{ideal} $I$ on $\bool{B}$ is a non-empty downward closed subset of $\bool{B}$ with respect to 
$\leq_{\bool{B}}$ which is also closed under $\vee$ (equivalently it is an ideal on the boolean ring 
$(\bool{B},\Delta,\wedge,0,1)$). Its \emph{dual filter} $\breve{I}$ is the set $\bp{\neg a:a\in I}$. It is 
a filter on the poset $(\bool{B}^+;\leq_{\bool{B}})$ (equivalently $I$ is an ideal in the boolean ring $\bool{B}$).
\item
An ideal $I$ on $\bool{B}$ is \emph{$<\delta$-complete} ($\delta$-complete) if all the subsets of $I$ of size less than 
$\delta$ (of size $\delta$) have an upper bound in $I$. 
\item
A \emph{maximal} ideal $I$ is an ideal properly contained in $\bool{B}$ and maximal with respect to this property
(equivalently it is a prime ideal on the boolean ring 
$(\bool{B},\Delta,\wedge,0,1)$).
Its dual filter is an \emph{ultrafilter}. An ideal $I$ is maximal if and only if $a\in I$ or $\neg a\in I$ for all $a\in\bool{B}$.
\item
$\bool{B}$ is $<\delta$-complete ($\delta$-complete) if all subsets of size less than $\delta$ (of size $\delta$)
have a supremum and an infimum.
\item
Given an ideal $I$ on $\bool{B}$, $\bool{B}/I$ is the quotient boolean algebra given by equivalence classes
$[a]_I$ obtained by $a=_I b$ iff $a\Delta b\in I$. 
\item 
$\bool{B}/I$ is $<\kappa$-complete if $I$ and $\bool{B}$ are both $<\kappa$-complete.
\item
$\bool{B}$ is \emph{atomless} if there are no minimal
elements in the partial order $(\bool{B}^+;\leq_{\bool{B}})$.
\item $\bool{B}$ is \emph{atomic} if the set of minimal
elements in the partial order $(\bool{B}^+;\leq_{\bool{B}})$ is open dense.
\end{itemize}
\end{notation}


Usually we insist in the formulation of forcing axioms 
on the requirement that for certain partial orders
$P$ any family of predense subsets of $P$ of some fixed size $\kappa$ can be met in a single filter.
In order to obtain a greater variety of forcing axioms, we need to consider a much richer variety 
of properties which characterizes the families of predense sets of $P$ which can be met in a single filter. 
Using boolean algebras, 
by considering partial orders of the
form $(\bool{B}^+;\leq_{\bool{B}})$ for some boolean algebra $\bool{B}$, 
we can formulate (using the algebraic structure of $\bool{B}$)
a wide spectrum of properties each defining a distinct forcing axiom.

\subsection{Measurable cardinals}

A cardinal $\kappa$ is measurable 
if and only if there is a uniform $<\kappa$-complete
ultrafilter on the boolean algebra $\pow{\kappa}$. The requirement that $G$ is uniform amounts to sayinging
that $G$ is disjoint from the ideal $I$ on the boolean algebra 
$(\pow{\kappa},\cap,\cup,\emptyset,\kappa)$ given by the bounded subsets of $\pow{\kappa}$. 
This means that we are actually looking for an ultrafilter $G$ on the boolean algebra
$\pow{\kappa}/I$. This is an atomless boolean algebra which is $<\kappa$-complete.
The requirement that $G$ is $<\kappa$-complete amounts to asking that $G$ selects an unique member of any partition of
$\kappa$ in $<\kappa$-many pieces, moreover any maximal antichain $\bp{[A_i]_I:i<\gamma}$ in the boolean algebra
$\pow{\kappa}/I$ of size $\gamma$  less than $\kappa$ 
is induced by a partition of $\kappa$ in $\gamma$-many pairwise
disjoint pieces.

All in all, we have the following characterization of measurability:
\begin{definition}
$\kappa$ is a \emph{measurable} cardinal if and only if there is an ultrafilter $G$ on $\pow{\kappa}/I$ 
(where $I$ is the ideal of bounded subsets of $\kappa$) which meets all the maximal antichains on
$\pow{\kappa}/I$ of size less than $\kappa$.
\end{definition}

In particular the measurability of $\kappa$ holds if and only if 
$(\pow{\kappa}/I)^+$ satisfies a certain forcing axiom stating that certain collections of predense subsets of 
$(\pow{\kappa}/I)^+$ can be simultaneously met in a filter.

We are led to the following definitions:
\begin{definition}
Let $(P,\leq)$ be a partial order and $\mathcal{D}$ be a family of non-empty subsets of $P$.
A filter $G$ on $P$ is $\mathcal{D}$-generic if $G\cap D$ is non-empty for all $D\in\mathcal{D}$.

Let $\phi(x,y)$ be a property and $(P,\leq)$ a partial order.
$\FA_\phi(P)$ holds if for any family $\mathcal{D}$ of predense subsets of $P$ such that
$\phi(P,\mathcal{D})$ holds there is some $\mathcal{D}$-generic filter $G$ on $P$.
\end{definition}

For instance, $\FA_\kappa(P)$ says that $\FA_\phi(P)$ holds for $\phi(x,y)$ being the property:
\begin{quote}
``$x$ is a partial order and $y$ is a family of predense subsets of $x$ of size $\kappa$'' 
\end{quote}
The measurability of $\kappa$ amounts to saying that $\FA_\phi(P)$ holds with $\phi(x,y)$
being the property
\begin{quote}
``$x$ is the partial order $(\pow{\kappa}/I)^+$ and $y$ is the (unique) family of predense subsets of $x$ consisting
of maximal antichains of  $(\pow{\kappa}/I)^+$ of size less than $\kappa$''
\end{quote}
We do not want to expand further on this topic but many other large cardinal properties of a 
cardinal $\kappa$ can 
be formulated as axioms of the form $\FA_\phi(P)$ for some property $\phi$ (for example this is the case for 
supercompactness, hugeness, almost hugeness, strongness, superstrongness, etc....).

In these first two sections we have already shown 
that the language of partial orders can accomodate 
three completely distinct and apparently unrelated families of
non-constructive principles which are essential tools in the development of many mathematical theories
(as is the case for the axiom of choice and of Baire's category theorem) and of crucial importance 
in the current developments of set theory (as is the case for large cardinal axioms).

\section{Boolean valued models, \L o\'s theorem, and generic absoluteness} \label{sec:LTGA}

We address here the correlation between forcing axioms and generic absoluteness results. We 
show how Shoenfield's absoluteness for $\Sigma^1_2$-properties and \L o\'s Theorem are two sides of 
the same coin: more precisely they are distinct specific cases of a unique general theorem which follows from $\mathsf{AC}$. 

After recalling the basic formulation of \L o\'s Theorem for ultraproducts, we introduce boolean valued models, 
and we argue that
\L o\'s Theorem for ultraproducts is the specific instance for complete atomic boolean algebras
of a more general theorem 
which applies to a much larger class of boolean valued models.
Then we introduce the concept of boolean ultrapower of a first order structure
on a Polish space $X$ endowed with Borel predicates $R_1,\dots,R_n$, 
and show that Shoenfield's absoluteness for $\Sigma^1_2$-properties
amounts to saying that the boolean ultrapower of $\ap{X,R_1,\dots,R_n}$ by any complete boolean algebra is a
$\Sigma_2$-elementary superstructure of $\ap{X,R_1,\dots,R_n}$.

\subsection{\L o\'s Theorem}
  
 \begin{theorem}\label{thm:LOSstultrarpod}
 Let $\bp{\mathfrak{M}_l:l\in L}$ be models in a given first order signature 
 \[
 \mathcal{L}=\bp{R_i:i\in I,\,f_j:j\in J,\,c_k:k\in K },
 \]
 i.e. each $\mathfrak{M}_l=(M_l,R^l_i:i\in I,\,f^l_j:j\in J,\,c^l_k:k\in K)$.
 Let $G$ be an ultrafilter on $L$ (i.e. its dual is a prime ideal on the boolean algebra $\pow{L}$).
 Let 
 \[
 [f]_{G}=\bp{g\in\prod_{l\in L}M_l:\bp{l\in L: g(l)=f(l)}\in G}
 \]
 for each $f\in \prod_{l\in L}M_l$, and set
 \[
\prod_{l\in L}M_l/G=\bp{[f]_G:f\in \prod_{l\in L}M_l}.
 \]
For each $i\in I$ let $\bar{R}_i([f_1]_G,\dots,[f_n]_G)$ hold on $\prod_{l\in L}M_l/G$ if and only if
 \[
 \bp{l\in L:\,\mathfrak{M}_l\models R^l_i(f_1(l),\dots,f_n(l))}\in G.
 \]
Similarly interpret $\bar{f}_j:\prod_{l\in l}(M_l/G)^n\to\prod_{l\in L}M_l/G$ 
 and $\bar{c}_k\in \prod_{l\in l}M_l^n/G$ for each $j\in J$ and $k\in K$.
 
 Then: 
 \begin{enumerate}
 \item
 For all formulae $\phi(x_1,\dots,x_n)$ in the signature $\mathcal{L}$
 \[
 (\prod_{l\in L}M_l/G,\bar{R}_i:i\in I,\,\bar{f}_j:j\in J,\,\bar{c}_k:k\in K)\models \phi([f_1]_G,\dots,[f_n]_G)
 \]
 if and only if
 \[
 \bp{l\in L:\,\mathfrak{M}_l\models \phi(f_1(l),\dots,f_n(l))}\in G.
 \]
 \item
 Moreover if $\mathfrak{M}_l=\mathfrak{M}$ for all $l\in L$ (i.e. $\prod_{l\in L}M_j/G$ is the ultrapower of 
 $M$ by $G$), we have that
 the map
 $m\mapsto[c_m]_G$ (where $c_m:L\to M$ is constant with value $m$) defines an elementary embedding.
 \end{enumerate}
 \end{theorem}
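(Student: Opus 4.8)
The plan is to prove \L o\'s Theorem by induction on the complexity of the formula $\phi$, which is the standard route. First I would check that the definitions of $\bar R_i$, $\bar f_j$, $\bar c_k$ are well posed, i.e. independent of the chosen representatives $f_1,\dots,f_n$: this uses only that $G$ is a filter (if $g =_G f$ and $\bp{l:\mathfrak{M}_l\models R^l_i(f_1(l),\dots)}\in G$ then intersecting with the sets $\bp{l: g_m(l)=f_m(l)}\in G$ and using upward closure gives the corresponding set for the $g_m$'s). The analogous check for $\bar f_j$ shows it is a genuine function. This preliminary bookkeeping is routine but must be done before the interpretations make sense.

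For the main induction, the atomic case is essentially the definition of the $\bar R_i$'s together with a lemma identifying terms: by a sub-induction on term complexity one shows that for any $\mathcal{L}$-term $t(x_1,\dots,x_n)$, the value $t^{\prod M_l/G}([f_1]_G,\dots,[f_n]_G)$ equals $[\,l\mapsto t^{\mathfrak{M}_l}(f_1(l),\dots,f_n(l))\,]_G$; then an atomic formula $R_i(t_1,\dots,t_m)$ or $t_1 = t_2$ reduces directly to the defining clause. The Boolean connective steps are where the two directions of ``$\in G$'' are glued: for $\neg\psi$ one uses that $G$ is an \emph{ultra}filter, so $\bp{l:\mathfrak{M}_l\models\neg\psi(\vec f(l))}\in G \iff \bp{l:\mathfrak{M}_l\models\psi(\vec f(l))}\notin G$, invoking that the complement of a set is in $G$ iff the set is not; for $\psi_1\wedge\psi_2$ one uses closure of $G$ under finite intersections and upward closure. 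The existential quantifier step $\exists y\,\psi(y,\vec x)$ is the crux: the ``$\Leftarrow$'' direction is immediate by reindexing a representative, but the ``$\Rightarrow$'' direction requires, from a witness $[h]_G$ in the ultraproduct satisfying $\psi([h]_G,[f_1]_G,\dots)$, producing for $G$-almost every $l$ a local witness $h(l)$ — this is exactly where the axiom of choice enters, to select $h(l)\in M_l$ witnessing $\mathfrak{M}_l\models\exists y\,\psi(y,\vec f(l))$ on the set where such a witness exists, and arbitrarily elsewhere.

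Part (2) then follows formally from part (1): when all $\mathfrak{M}_l=\mathfrak{M}$, for a constant function $c_m$ the set $\bp{l:\mathfrak{M}\models\phi(m,\dots)}$ is either all of $L$ or empty, hence in $G$ iff $\mathfrak{M}\models\phi(m,\dots)$; applying this with $\phi$ arbitrary shows $m\mapsto[c_m]_G$ is elementary, and injectivity comes from the $\phi$ being $x_1 = x_2$ together with $G$ containing no empty set. I expect the main obstacle to be a clean treatment of the existential step — making the appeal to choice explicit and handling the set where no local witness exists — rather than any deep difficulty; everything else is symbol-pushing modulo the filter/ultrafilter axioms. I would also remark that the only place ``ultra'' (as opposed to merely ``filter'') is used is the negation step, which is the observation that foreshadows the later Boolean-valued generalization in the paper.
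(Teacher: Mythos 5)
Your proposal is the standard textbook proof and it is essentially correct; note, however, that the paper itself does not prove Theorem~\ref{thm:LOSstultrarpod} at all --- it states it as a classical result and confines itself to the remark that the axiom of choice is used in the induction step for existential quantifiers (and, conversely, that the theorem implies $\mathsf{AC}$ since the non-emptiness of $\prod_{l\in L}M_l/G$ already forces $\prod_{l\in L}M_l\neq\emptyset$). So there is nothing to compare against beyond that remark, which your sketch correctly identifies as the crux. One point you should straighten out before writing this up: you have the two directions of the existential step tangled together. If the ultraproduct satisfies $\exists y\,\psi(y,[\vec f]_G)$ via a witness $[h]_G$, then the inductive hypothesis already gives $\bp{l:\mathfrak{M}_l\models\psi(h(l),\vec f(l))}\in G$, and upward closure of $G$ finishes that direction with no appeal to choice. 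It is the \emph{converse} --- passing from $\bp{l:\mathfrak{M}_l\models\exists y\,\psi(y,\vec f(l))}\in G$ to an element $h\in\prod_{l\in L}M_l$ with $[h]_G$ witnessing the existential in the quotient --- that requires choosing a local witness $h(l)$ for each $l$ in that $G$-large set (and an arbitrary element of $M_l$ elsewhere); your sentence attributes the choice to the wrong direction while describing the right construction. Everything else (well-definedness of the interpretations, the term lemma, the use of ``ultra'' only at the negation step, and the derivation of (2) from (1) via constant functions) is correct and is exactly what the paper takes for granted.
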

 
 It is a useful exercise to check that the axiom of choice is essentially used in the induction step for 
 existential quantifiers in the proof of
 \L o\'s Theorem. Moreover \L o\'s Theorem is clearly a strengthnening of the axiom of choice, 
 for the very existence of an 
 element in $\prod_{l\in L}M_l/G$ implies that $\prod_{l\in L}M_l$ is non-empty.

 One peculiarity of  the above formulation of \L o\'s theorem is that it applies just to ultrafilters on $\pow{X}$. 
 We aim to find a ``most'' general formulation of this Theorem, which makes sense also for other kind of 
 ``ultraproducts'' and of ultrafilters on boolean algebras other than $\pow{X}$. This forces us to introduce
 the boolean valued semantics.
 
 \subsection{A fast briefing on complete boolean algebras and Stone duality}

 Recall that for a given topological space $(X,\tau)$ the regular open sets are those $A\in\tau$ such that
 $A=\Reg{A}=\Int{\Cl{A}}$ ($A$ coincides with the interior of its closure) and that $\RO(X,\tau)$ is the 
 complete boolean algebra whose elements are regular open
 sets and whose operations are given by
 $A\wedge B=A\cap B$, $\bigvee_{i\in I}A_i=\Reg{\bigcup_{i\in I}A_i}$, $\neg A=X\setminus\Cl A$.
 
 For any partial order $(P,\leq)$ the map $i:P\to \RO(P,\tau_P)$ given by 
 $p\mapsto\Reg{\downarrow\bp{p}}$ is 
 order and incompatibility preserving and embeds $P$ as a dense
 subset of the non-empty regular open sets in $\RO(P,\tau_P)$. 
 
 Recall also that the Stone space
 $\St(\bool{B})$ of a boolean algebra $\bool{B}$ is given by its ultrafilters $G$ and it is endowed with a compact
 topology $\tau_{\bool{B}}$ whose clopen sets are the sets $N_b=\bp{G\in\St(\bool{B}):b\in G}$ so that 
 the map $b\mapsto N_b$ defines a natural isomorphism of $\bool{B}$ with the boolean algebra 
 $\CLOP(\St(\bool{B}))$
 of clopen subset of $\St(\bool{B})$.
 Moreover a boolean algebra $\bool{B}$ is complete if and only if 
 $\CLOP(\St(\bool{B}))=\RO(\St(\bool{B}),\tau_{\bool{B}})$. Spaces $X$ satisfying the property that their
 regular open sets are closed are extremally (or extremely) disconnected.
 
 We refer the reader to \cite{GIVANTHALMOS} or \cite[Chapter 1]{viale-notesonforcing} (available at the
 following hyperlink:~\emph{\href{http://www.logicatorino.altervista.org/matteo_viale/dispenseTI2014.pdf}{Notes on Forcing}})
 for a detailed account of these matters.

 \subsection{Boolean valued models}

In a first order model, a formula can be interpreted as true or false. 
Given a complete boolean algebra $\bool{B}$,
$\bool{B}$-boolean valued models generalize Tarski semantics associating to each formula a value in $\bool{B}$, 
so that propositions 
are not only true and false anymore (that is, only associated to 
$1_{\bool{B}}$ and $0_{\bool{B}}$ respectively),
but take also other ``intermediate values'' of truth.
A complete account of the theory of these boolean
valued models can be found in \cite{RASIOWASIKORSKI1970}. 
We now recall some basic facts; an expanded version of the material of this section can be found 
in~\cite{VIAVAC15} (see also the following hyperlink:~\emph{\href{http://www.logicatorino.altervista.org/matteo_viale/thesis-vaccaro.pdf}{Tesi-Vaccaro}}) and in \cite[Chapter 3]{viale-notesonforcing}.
In order to avoid unnecessary technicalities, we define 
boolean valued semantics just for
relational first order languages (i.e. signatures with no function symobols).


\begin{definition} \label{str2}
Given a complete boolean algebra $\bool{B}$ and a first order relational language 
\[ 
\mathcal{L}=\left\{R_i :i\in I \right\}\cup\bp{c_j:j\in J}
\]
a \emph{$\bool{B}$-boolean valued model} (or $\bool{B}$-valued model) 
$\mathcal{M}$ in the language $\mathcal{L}$ is a tuple 
\[ 
\langle M, =^\mathcal{M}, R_i^\mathcal{M} : i \in I, c_j^\mathcal{M} : j \in J\rangle
 \] 
where:
\begin{enumerate}
\item \label{nn1} $M$ is a non-empty set, called \emph{domain} of the 
$\bool{B}$-boolean valued model, whose elements are called \emph{$\bool{B}$-names};
\item $=^\mathcal{M}$ is the \emph{boolean value} of the equality: 
\begin{align*}
=^\mathcal{M}:M^2 &\to \bool{B} \\
(\tau,\sigma) &\mapsto \Qp{\tau=\sigma}^{\mathcal{M}}_{\bool{B}}
\end{align*}
\item The forcing relation $R_i^{\mathcal{M}}$ is the \emph{boolean interpretation}
of the $n$-ary relation symbol $R_i$:
\begin{align*}
R_i^{\mathcal{M}}:M^n &\to \bool{B} \\
(\tau_1,\dots,\tau_n) &\mapsto \Qp{R_i(\tau_1,\dots, \tau_n)}^{\mathcal{M}}_{\bool{B}}
\end{align*}
\item $c_j^{\mathcal{M}}\in M$ is the \emph{boolean interpretation}
of the constant symbol $c_j$.
\end{enumerate}

We require that the following conditions hold:
\begin{itemize}
\item for $\tau,\sigma,\chi\in M$,
\begin{enumerate}
\item 
$\Qp{\tau=\tau}^{\mathcal{M}}_{\bool{B}}=1_{\bool{B}}$;
\item 
$\Qp{\tau=\sigma}^{\mathcal{M}}_{\bool{B}}=\Qp{\sigma=\tau}^{\mathcal{M}}_{\bool{B}}$;
\item 
$\Qp{\tau=\sigma}^{\mathcal{M}}_{\bool{B}}
\wedge\Qp{\sigma=\chi}^{\mathcal{M}}_{\bool{B}} \le \Qp{\tau=\chi}^{\mathcal{M}}_{\bool{B}}$;
\end{enumerate}
\item for $R\in \mathcal{L}$ with arity $n$, and
$(\tau_1,\dots, \tau_n),(\sigma_1,\dots, \sigma_n) \in M^n$,
\begin{enumerate}
\item  $ ( \bigwedge_{h \in \left\{1, \dots, n  \right\} }
  \Qp{\tau_h=\sigma_h}^{\mathcal{M}}_{\bool{B}}  ) \wedge 
  \Qp{R(\tau_1, \dots, \tau_n)}^{\mathcal{M}}_{\bool{B}}
    \le \Qp{ R(\sigma_1, \dots, \sigma_n)}^{\mathcal{M}}_{\bool{B}} $;
\end{enumerate}
\end{itemize}

Given a $\bool{B}$-model $\langle M,=^M\rangle$ for equality, a forcing relation $R$ on $M$ is a
map $R:M^n\to\bool{B}$ satisfying the above condition for boolean predicates.  
\end{definition}

The boolean valued semantics is defined as follows: 
\begin{definition}\label{BVS}
Let
\[ 
\langle M, =^\mathcal{M}, R_i^\mathcal{M} : i \in I, c_j^\mathcal{M} : j \in J\rangle
\] 
be a $\bool{B}$-valued model in a relational language 
\[ 
\mathcal{L}=\left\{R_i :i\in I \right\}\cup\bp{c_j:j\in J},
\]
$\phi$ a $\mathcal{L}$-formula
whose free variables are in $\bp{x_1,\dots,x_n}$,
and $\nu$ a
valuation of the free variables in $\mathcal{M}$ whose
domain contains $\bp{x_1,\dots,x_n}$.
Since $\mathcal{L}$ is a relational language, the terms of a formula are either free variables or constants,
let us define $\nu(c_j)=c^M_j$ for $c_j$ a constant of $\mathcal{L}$.
We denote with $\Qp{\phi}^{\mathcal{M},\nu}_{\bool{B}}$ the \emph{boolean value} of $\phi$ with the assignment
$\nu$.

Given a formula $\phi$, we define recursively $\Qp{\phi}^{\mathcal{M},\nu}_{\bool{B}}$ as follows:
\begin{itemize}
\item for atomic formulae this is done letting
\[
\Qp{t=s}^{\mathcal{M},\nu}_{\bool{B}} = \Qp{\nu(t)=\nu(s)}^{\mathcal{M}}_{\bool{B}},
\]
and
\[ 
\Qp{R(t_1,\dots,t_n)}^{\mathcal{M},\nu}_{\bool{B}} =
\Qp{R(\nu(t_1),\dots,\nu(t_n))}^{\mathcal{M}}_{\bool{B}}
 \]
\item if $\phi\equiv \lnot \psi$, then 
\[ 
\Qp{\phi}^{\mathcal{M},\nu}_{\bool{B}}=\lnot \Qp{\psi}^{\mathcal{M},\nu}_{\bool{B}};
\]
\item if $\phi \equiv \psi \wedge \theta$, then
 \[
\Qp{\phi}^{\mathcal{M},\nu}_{\bool{B}}=\Qp{\psi}^{\mathcal{M},\nu}_{\bool{B}}
\wedge \Qp{\theta}^{\mathcal{M},\nu}_{\bool{B}}; 
\]
\item if $\phi\equiv \exists y \psi(y)$, then 
\[
 \Qp{\phi}^{\mathcal{M},\nu}_{\bool{B}}= \bigvee_{\tau\in M} \Qp{\psi(y/\tau)}^{\mathcal{M},\nu}_{\bool{B}};
 \]
\end{itemize}
If no confusion can arise, we omit the superscripts $\mathcal{M},\nu$ and the subscript $\bool{B}$, 
and we simply denote
the boolean value of a formula $\phi$ with parameters in $\mathcal{M}$ by $\Qp{\phi}$.
\end{definition}

%

With elementary arguments it is
possible prove the Soundness Theorem for boolean
valued models.

\begin{theorem}[Soundness Theorem] \label{soundness}
Assume $\mathcal{L}$ is a relational language and 
$\phi$ is a $\mathcal{L}$-formula which
is syntactically provable by
a $\mathcal{L}$-theory $T$. Assume each formula in $T$ has boolean
value at least $b\in \bool{B}$ in a $\bool{B}$-valued model $\mathcal{M}$ with valuation $\nu$.
Then $\Qp{\phi}^{\mathcal{M},\nu}_{\bool{B}}\geq b$ as well.
\end{theorem}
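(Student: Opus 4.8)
The plan is to prove the Soundness Theorem by induction on the length of a formal derivation of $\phi$ from $T$, mimicking the classical proof of the Soundness Theorem for Tarski semantics but replacing ``truth'' with ``boolean value $\geq b$''. First I would fix a concrete Hilbert-style proof system for first order logic (logical axioms plus modus ponens and generalization, say), so that ``syntactically provable'' has an unambiguous meaning. The induction hypothesis is: for every formula $\psi$ appearing in the derivation, $\Qp{\psi}^{\mathcal{M},\nu}_{\bool{B}}\geq b$; the base cases are the members of $T$ (handled by hypothesis) and the logical axioms, and the inductive cases are the two inference rules.

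The core technical content is two lemmas, both proved by the recursion in Definition~\ref{BVS} together with the algebraic identities of a complete boolean algebra. The first is that the logical axioms receive boolean value $1_{\bool{B}}$: for propositional tautologies this is the observation that any boolean polynomial which is identically $1$ in the two-element algebra is identically $1$ in every boolean algebra; for the quantifier axioms (e.g. $\forall y\,\psi(y)\rightarrow\psi(y/t)$ and $\psi(y/t)\rightarrow\exists y\,\psi(y)$) one uses $\bigwedge_{\tau\in M}\Qp{\psi(y/\tau)}\leq\Qp{\psi(y/\sigma)}\leq\bigvee_{\tau\in M}\Qp{\psi(y/\tau)}$, after checking the substitution lemma $\Qp{\psi(y/t)}^{\mathcal{M},\nu}=\Qp{\psi}^{\mathcal{M},\nu[y\mapsto\nu(t)]}$; for the equality axioms one uses clauses (1)--(3) for $=^{\mathcal{M}}$ and the congruence condition for the $R_i^{\mathcal{M}}$ from Definition~\ref{str2}. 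The second lemma handles the rules: modus ponens follows from $b\leq\Qp{\psi}$ and $b\leq\Qp{\psi}\Rightarrow\Qp{\theta}=\neg\Qp{\psi}\vee\Qp{\theta}$ giving $b\leq\Qp{\psi}\wedge(\neg\Qp{\psi}\vee\Qp{\theta})\leq\Qp{\theta}$; generalization follows because $b\leq\Qp{\psi(y/\tau)}$ for every $\tau\in M$ (the valuation of $y$ being free in the relevant sense) yields $b\leq\bigwedge_{\tau\in M}\Qp{\psi(y/\tau)}=\Qp{\forall y\,\psi}$.

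Assembling these, the induction goes through: every line of the derivation has boolean value $\geq b$, hence so does the last line $\phi$. I would remark that completeness of $\bool{B}$ is exactly what is needed for the infinitary meets and joins $\bigwedge_{\tau\in M}$ and $\bigvee_{\tau\in M}$ indexing over the (possibly large) name set $M$ to exist, so the statement is well-posed.

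The main obstacle I anticipate is purely bookkeeping rather than conceptual: getting the treatment of free and bound variables and of substitution exactly right, so that the substitution lemma and the side condition on the generalization rule are stated correctly — in particular making sure that $\nu$ ranges over assignments into $M$ and that $\Qp{\forall y\,\psi}$ is correctly read via $\neg\exists y\,\neg\psi$ as $\bigwedge_{\tau\in M}\Qp{\psi(y/\tau)}$. Once the substitution lemma is in place, each remaining case is a one-line boolean-algebra computation, which is why the excerpt can legitimately call the arguments ``elementary.''
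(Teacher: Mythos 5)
Your proposal is correct and is precisely the ``elementary argument'' the paper alludes to without writing out: the paper offers no proof of Theorem~\ref{soundness}, and the standard route is exactly your induction on the length of a Hilbert-style derivation, with logical axioms receiving value $1_{\bool{B}}$ (tautologies because boolean identities valid in $2$ are valid in every boolean algebra, quantifier axioms via $\bigwedge_{\tau\in M}\Qp{\psi(y/\tau)}\leq\Qp{\psi(y/\sigma)}\leq\bigvee_{\tau\in M}\Qp{\psi(y/\tau)}$ after a substitution lemma, equality axioms from the congruence clauses of Definition~\ref{str2}) and the two rules preserving value $\geq b$. The only care point, which you already flag, is the side condition on generalization: to close that case the inductive hypothesis must be carried for all valuations agreeing with $\nu$ away from the generalized variable, which is unproblematic when $T$ consists of sentences.
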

On the other hand the completeness theorem for the boolean valued semantics with respect to first order calculi 
is a triviality, given that $2$ is complete boolean algebra.

We get a standard  Tarski model from a $\bool{B}$-valued model by passing to a 
quotient by an ultrafilter $G\subseteq \bool{B}$. 

\begin{definition} \label{ultraquot}
Take $\bool{B}$ a complete boolean algebra, $\mathcal{M}$ a $\bool{B}$-valued model in the language 
$\mathcal{L}$, and $G$ an ultrafilter over $\bool{B}$.
Consider the following equivalence relation on $M$:
\[ 
\tau \equiv_G \sigma \Leftrightarrow \Qp{\tau=\sigma}\in G 
\]
The first order model
$\mathcal{M}/G =\langle M/G, R_i^{\mathcal{M}/G} : i \in I,
c_j^{\mathcal{M}/G} : j \in J \rangle$ is defined letting:
\begin{itemize}
\item For any  $n$-ary relation symbol $R$ in $\mathcal{L}$
\[
R^{\mathcal{M}/G}=\bp{([\tau_1]_G,\dots,[\tau_n]_G)\in (M/G)^n: \Qp{R(\tau_1,\dots,\tau_n)}\in G}.
\]
\item For any constant symbol $c$ in $\mathcal{L}$
\begin{align*}
c^{\mathcal{M}/G}=[c^{\mathcal{M}}]_G.
\end{align*}
\end{itemize}
\end{definition}

If we require $\mathcal{M}$ to satisfy a key additional condition, we get an easy
way to control the truth value of formulas in $\mathcal{M}/G$.

\begin{definition}
A $\bool{B}$-valued model $\mathcal{M}$ for the language $\mathcal{L}$ is \emph{full} if for every
$\mathcal{L}$-formula $\phi(x,\bar{y})$ and every $\bar{\tau} \in M^{\lvert \bar{y} \rvert}$
there is a $\sigma\in M$ such that
\[
 \Qp{\exists x \phi(x, \bar{\tau})}  =  \Qp{\phi(\sigma, \bar{\tau}) }
 \] 
\end{definition}

\begin{theorem}[\L o\'{s}'s Theorem for Boolean Valued Models] \label{them:LOSfbvm}
Assume $\mathcal{M}$ is a full $\bool{B}$-valued model for the relational language $\mathcal{L}$.
Then for every formula $\phi(x_1, \dots, x_n)$ in $\mathcal{L}$ and $(\tau_1,\dots,\tau_n)\in M^n$: 
\begin{enumerate}
\item 
For all ultrafilters $G$ over $\bool{B}$
\[
\mathcal{M}/G\models \phi([\tau_1]_G,\dots,[\tau_n]_G)\text{ if and only if }
\Qp{\phi(\tau_1,\dots,\tau_n)}\in G.
\]

\item
For all $a\in \bool{B}$ the following are equivalent:
\begin{enumerate}
\item
$\Qp{\phi(f_1,\dots,f_n)}\geq a$,
\item
$\mathcal{M}/G\models \phi([\tau_1]_G,\dots,[\tau_n]_G)$ for all $G\in N_a$,
\item
$\mathcal{M}/G\models \phi([\tau_1]_G,\dots,[\tau_n]_G)$ for densely many $G\in N_a$.
\end{enumerate}
\end{enumerate}
\end{theorem}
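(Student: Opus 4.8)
\textbf{Proof plan for the Boolean Valued Models {\L}o{\'s} Theorem.}

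The plan is to prove part (1) by induction on the complexity of $\phi$, and then derive part (2) as a routine consequence of part (1) together with basic facts about the Stone space. First I would set up the induction hypothesis: for all tuples $(\tau_1,\dots,\tau_n)$ from $M$ and all ultrafilters $G$ over $\bool{B}$, $\mathcal{M}/G\models\phi([\tau_1]_G,\dots,[\tau_n]_G)$ iff $\Qp{\phi(\tau_1,\dots,\tau_n)}\in G$. The atomic case is immediate from Definition~\ref{ultraquot}: the interpretations $R^{\mathcal{M}/G}$ and $c^{\mathcal{M}/G}$ were defined precisely so that membership of the boolean value in $G$ corresponds to satisfaction in the quotient; one should also check the equality relation $=^{\mathcal{M}/G}$ (i.e. $\equiv_G$) is respected, which is where conditions (1)--(3) on $\Qp{\cdot=\cdot}$ and the congruence condition for relations are used, to see that $\mathcal{M}/G$ is a well-defined first order structure and that $\equiv_G$ is a genuine congruence.

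For the inductive step I would treat the connectives using the fact that $G$ is an \emph{ultrafilter}: for negation, $\Qp{\neg\psi}=\neg\Qp{\psi}\in G$ iff $\Qp{\psi}\notin G$ (since $G$ is an ultrafilter on $\bool{B}$), which by the induction hypothesis is iff $\mathcal{M}/G\not\models\psi$, i.e. iff $\mathcal{M}/G\models\neg\psi$; for conjunction, $\Qp{\psi\wedge\theta}=\Qp{\psi}\wedge\Qp{\theta}\in G$ iff both conjuncts are in $G$ (filter closed under meets, and upward closed), which by induction is iff $\mathcal{M}/G$ satisfies both. The existential quantifier is the heart of the matter and the step where \emph{fullness} is essential. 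Given $\Qp{\exists x\,\phi(x,\bar\tau)}\in G$, fullness supplies a single $\sigma\in M$ with $\Qp{\phi(\sigma,\bar\tau)}=\Qp{\exists x\,\phi(x,\bar\tau)}$, hence $\Qp{\phi(\sigma,\bar\tau)}\in G$, and by the induction hypothesis $\mathcal{M}/G\models\phi([\sigma]_G,\bar\tau/G)$, so the witness $[\sigma]_G$ shows $\mathcal{M}/G\models\exists x\,\phi(x,\bar\tau)$. Conversely, if some $[\sigma]_G$ witnesses the existential in $\mathcal{M}/G$, then by induction $\Qp{\phi(\sigma,\bar\tau)}\in G$, and since $\Qp{\phi(\sigma,\bar\tau)}\le\bigvee_{\rho\in M}\Qp{\phi(\rho,\bar\tau)}=\Qp{\exists x\,\phi(x,\bar\tau)}$, upward closure of $G$ gives $\Qp{\exists x\,\phi(x,\bar\tau)}\in G$. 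Note that without fullness only the ``converse'' direction survives; the forward direction genuinely needs the witness internal to $\mathcal{M}$, and I expect this existential step — specifically, making sure fullness is invoked correctly and that no choice of representatives causes trouble — to be the main (albeit mild) obstacle.

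Finally, part (2): the equivalence of (a), (b), (c). From (a) $\Qp{\phi(\tau_1,\dots,\tau_n)}\ge a$: every $G\in N_a$ contains $a$, hence contains $\Qp{\phi(\tau_1,\dots,\tau_n)}$ by upward closure, so by part (1) $\mathcal{M}/G\models\phi$, giving (b); and (b) trivially implies (c). For (c)$\Rightarrow$(a), suppose $\Qp{\phi(\tau_1,\dots,\tau_n)}=b\not\ge a$; then $a\wedge\neg b\ne 0_{\bool{B}}$, so $N_{a\wedge\neg b}$ is a nonempty clopen subset of $N_a$, and any $G$ in it lies in $N_a$ but has $\neg b\in G$, hence $b\notin G$, so by part (1) $\mathcal{M}/G\not\models\phi$; since $N_{a\wedge\neg b}$ is open and nonempty this contradicts (c) (density). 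This uses only the Stone duality facts recalled above — that the $N_b$ form a basis of clopen sets and $N_b=\emptyset$ iff $b=0_{\bool{B}}$ — and no further induction, so it is routine.
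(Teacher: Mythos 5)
Your proof is correct and is exactly the standard argument: induction on formula complexity with fullness invoked at the existential step for part (1), and the Stone-space density argument for part (2). The paper itself omits the proof of Theorem~\ref{them:LOSfbvm} (deferring to the cited references), and your outline matches the intended argument, including the key points that the ultrafilter property is what handles negation and that fullness is needed only for the forward direction of the existential case.
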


A key observation to relate standard ultraproducts to boolean valued models is the following:
\begin{fact}
Let $(M_x:x\in X)$ be a family of Tarski-models in the first order relational language $\mathcal{L}$.
Then $N=\prod_{x\in X}M_x$ is a full $\pow{X}$-model letting for 
each $n$-ary relation symbol $R\in \mathcal{L}$, 
$\Qp{R(f_1,\dots,f_n)}_{\pow{X}}=\bp{x\in X: M_x\models R(f_1(x),\dots,f_n(x))}$.
\end{fact}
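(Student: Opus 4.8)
The plan is to verify the three defining axioms of a $\pow{X}$-valued model for $N=\prod_{x\in X}M_x$ and then check fullness. First I would set the boolean value of equality to be $\Qp{f=g}_{\pow{X}}=\bp{x\in X:f(x)=g(x)}$, which is the only sensible choice given that we want $N/G$ to be the usual ultraproduct $\prod_{x\in X}M_x/G$. With this choice the reflexivity, symmetry, and transitivity conditions of Definition~\ref{str2} become the pointwise statements $\bp{x:f(x)=f(x)}=X$, $\bp{x:f(x)=g(x)}=\bp{x:g(x)=f(x)}$, and $\bp{x:f(x)=g(x)}\cap\bp{x:g(x)=h(x)}\subseteq\bp{x:f(x)=h(x)}$, all immediate. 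Likewise the congruence condition for an $n$-ary relation symbol $R$ unravels to: if $f_h(x)=g_h(x)$ for all $h$ and $M_x\models R(f_1(x),\dots,f_n(x))$, then $M_x\models R(g_1(x),\dots,g_n(x))$ — true because each $M_x$ is a genuine first order structure. So $N$ really is a $\pow{X}$-valued model; constants $c\in\mathcal{L}$ are interpreted as $c^N(x)=c^{M_x}$.

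Next I would prove fullness, which is where the axiom of choice enters (exactly as in the classical \L o\'s Theorem). Given $\phi(y,\bar z)$ and parameters $\bar f\in N^{\lvert\bar z\rvert}$, I want $\sigma\in N$ with $\Qp{\exists y\,\phi(y,\bar f)}=\Qp{\phi(\sigma,\bar f)}$. By the recursive clause for $\exists$, $\Qp{\exists y\,\phi(y,\bar f)}=\bigcup_{g\in N}\bp{x:M_x\models\phi(g(x),\bar f(x))}$. For each $x$ in this union, $M_x\models\exists y\,\phi(y,\bar f(x))$, so pick (using $\mathsf{AC}$ on the family indexed by such $x$) a witness $\sigma(x)\in M_x$; for the remaining $x$ set $\sigma(x)$ to be anything in $M_x$. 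Then $\bp{x:M_x\models\phi(\sigma(x),\bar f(x))}$ equals $\Qp{\exists y\,\phi(y,\bar f)}$: the forward inclusion is by construction, the reverse because $\Qp{\phi(\sigma,\bar f)}\le\Qp{\exists y\,\phi(y,\bar f)}$ always. Hence $\Qp{\phi(\sigma,\bar f)}_{\pow{X}}=\Qp{\exists y\,\phi(y,\bar f)}_{\pow{X}}$, which is fullness.

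Finally I would note that completeness of $\pow{X}$ is trivial (arbitrary unions and intersections of subsets exist), so this genuinely is a complete boolean algebra and Theorem~\ref{them:LOSfbvm} applies to $N$; specializing that theorem to $\bool{B}=\pow{X}$ and an ultrafilter $G$ recovers part~(1) of the classical \L o\'s Theorem~\ref{thm:LOSstultrarpod}, since $N/G$ is by construction the ultraproduct $\prod_{x\in X}M_x/G$ and $\Qp{R(f_1,\dots,f_n)}\in G$ is precisely the \L o\'s condition $\bp{x:M_x\models R(f_1(x),\dots,f_n(x))}\in G$. I do not expect a serious obstacle here — the content is entirely bookkeeping — but the one point requiring a little care is fullness, where one must be honest that the axiom of choice is used to select the pointwise witnesses, exactly as in the remark following Theorem~\ref{thm:LOSstultrarpod} that $\mathsf{AC}$ enters at the existential step.
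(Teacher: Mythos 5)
Your proof is correct and is the canonical verification; the paper states this as a \emph{Fact} without proof, so there is nothing to diverge from. The only point worth making explicit in a careful write-up is that the identity $\Qp{\phi(g_1,\dots,g_n)}_{\pow{X}}=\bp{x\in X:M_x\models\phi(g_1(x),\dots,g_n(x))}$ for \emph{compound} $\phi$ (which you invoke when rewriting $\Qp{\exists y\,\phi(y,\bar f)}$ as a union of pointwise-satisfaction sets) is itself a lemma proved by induction on $\phi$, with the choice of witness functions entering at the existential step — exactly the step you correctly flag as the use of $\mathsf{AC}$.
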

Let $G$ be any non-principal ultrafilter on $X$. Then, using the notation of the previous fact,
$N/G$ is the familiar ultraproduct of the family $(M_x:x\in X)$ by $G$, and
the usual \L{}o\'s Theorem for ultraproducts of Tarski models 
is the specialization to the case of the full $\pow{X}$-valued model $N$
of Theorem~\ref{them:LOSfbvm}. 
Notice that in this special case, if the ultraproduct is an ultrapower of a model $M$,
 the embedding
$a\mapsto [c_a]_G$ (where $c_a(x)=a$ for all $x\in X$ and $a\in M$) is elementary. 



\subsection{Boolean ultrapowers of compact Hausdorff spaces and Shoenfield's absoluteness}

Take $X$ a set with the discrete topology, and for any $a\in X$, let
$G_a\in\St(\pow{X})$ denote the principal ultrafilter given by supersets of $\bp{a}$.
The map $a\mapsto G_a$ embeds $X$ as an open, dense, discrete subspace of $\St(\pow{X})$.
In particular for any topological space $(Y,\tau)$, any function $f:X\to Y$ is continuous (since $X$ has the discrete
topology); moreover in case $Y$ is compact Hausdorff it induces a unique continuous 
$\bar{f}:\St(\pow{X})\to Y$ mapping $G\in \St(\pow{X})$ to the unique point in $Y$ which is in the intersection of
$\bp{\Cl{A}: A\in \tau, f^{-1}[A]\in G}$ (we are in the special situation in which $\St(\pow{X})$ is also the 
Stone-\v{C}ech compactification of $X$).

This gives that for any compact Hausdorff space $(Y,\tau)$, the space $C(X,Y)=Y^X$ of (continuous) functions
from $X$ to $Y$ is canonically isomorphic to the space $C(\St(\pow{X}),Y)$ of continuous functions from
$\St(\pow{X})$ to $Y$. 

What if we replace $\pow{X}$ with an arbitrary (complete) boolean algebra?
In view of the above remarks, it 
is reasonable to regard $C(\St(\bool{B}),Y)$ is the $\bool{B}$-ultrapower of $Y$ for any compact 
Hausdorff space $Y$, since this is exactly what occurs for the case $\bool{B}=\pow{X}$.

Let us examine closely this situation in the case $Y=2^\omega$ with the product topology.
This will unfold the relation existing 
between the notion of boolean ultrapowers of $2^\omega$  and Shoenfield's absoluteness. 

Let us fix $\bool{B}$ arbitrary (complete) boolean algebra, and
set $M=C(St(\bool{B}),2^\omega)$. 
Fix also $R$ a Borel relation on $(2^\omega)^n$.
The continuity of  an $n$-tuple $f_1,\dots,f_n\in M$ implies that the set
\[
\{G: R(f_1(G)\dots,f_n(G))\}=(f_1\times \dots\times f_n)^{-1}[R]
\]
has the Baire property in $\St(\bool{B})$ (i.e. it has meager symmetric difference with
a unique regular open set --- see~\cite[Lemma 11.15, Def. 32.21]{JECH}), where 
$f_1\times \dots\times f_n(G)=(f_1(G),\dots,f_n(G))$.
So we can define 
\begin{align*}
R^M: & M^n\to\bool{B} \\
& (f_1,\dots,f_n)=\Reg{\bp{G: R(f_1(G),\dots, f_n(G)}}.
\end{align*}
Also, since the diagonal is closed in $(2^\omega)^2$,
\[
=^M(f,g)=\Reg{\bp{G: f(G)= g(G)}}
\] 
is well defined.

It is not hard to check that, for any Borel relation $R$ on $(2^\omega)^n$, 
the structure ${(M,=^M,R^M)}$ is a full $\bool{B}$-valued extension of $(2^\omega,=,R)$, 
where $2^\omega$ is copied inside 
$M$ as the set of constant functions. 
It is also not hard to check that
whenever $G$ is an ultrafilter on $St(\bool{B})$,
the map $i_G: 2^\omega\to M/G$ given 
by $x\mapsto [c_x]_G$ (the constant function with value $x$)
defines an injective morphism of the $2$-valued structure
$(2^\omega,R)$ into the $2$-valued structure $(M/G,R^M/G)$.
Nonetheless it is not clear whether this morphism is an elementary map or not. This is the case for
$\bool{B}=\pow{X}$, since in this case we are analyzing the standard embedding
of the first order structure  $(2^\omega,R)$ in its ultrapowers induced by ultrafilters on $\pow{X}$. What
are the properties of this map if $\bool{B}$ is some other complete boolean algebra?

We can relate the degree of elementarity of the map $i_G$
with Shoenfield's absoluteness for $\Sigma^1_2$-properties. 
This can be done if one is willing to accept as a black-box 
the identification of the $\bool{B}$-valued model $C(St(\bool{B}),2^\omega)$ with the $\bool{B}$-valued model given by
the family of $\bool{B}$-names for elements of $2^\omega$ in
$V^{\bool{B}}$ (which is the canonical $\bool{B}$-valued model for set theory); we will expand further 
on this identification in the next section. 
Modulo this identity, 
Shoenfield's absoluteness
can be recast as a statement about boolean valued models. 
We choose to name Cohen's absoluteness the following statement, which gives (as we will see) an equivalent
reformulation of Shoenfield's absoluteness. Its proof
(as we will see in the next section)
ultimately relies on Cohen's
forcing theorem, hence the name.
\begin{theorem}[Cohen's absoluteness]\label{thm:cohabs}
Assume $\bool{B}$ is a complete boolean algebra and $R\subseteq (2^\omega)^n$ is a Borel relation.
Let $M=C(St(\bool{B}),2^\omega)$  and $G\in \St(\bool{B})$.
Then
\[
(2^\omega,=,R)\prec_{\Sigma_2}(M/G,=^M/G,R^M/G).
\]
\end{theorem}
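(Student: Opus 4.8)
The plan is to establish $\Sigma_2$-elementarity of $i_G\colon (2^\omega,=,R)\to (M/G,=^M/G,R^M/G)$ by reducing everything to the fact that the $\bool{B}$-valued model $M=C(\St(\bool{B}),2^\omega)$ is canonically identified with the $\bool{B}$-valued model of $\bool{B}$-names for elements of $2^\omega$ inside $V^{\bool{B}}$, and then invoking the forcing theorem together with Shoenfield absoluteness in the usual form. First I would recall that for a full $\bool{B}$-valued model, Theorem~\ref{them:LOSfbvm} tells us that $M/G\models\phi([f_1]_G,\dots,[f_n]_G)$ iff $\Qp{\phi(f_1,\dots,f_n)}\in G$; hence to show $i_G$ is $\Sigma_2$-elementary it suffices to show that for every $\Sigma_2$-formula $\phi(x_1,\dots,x_n)$ (in the language with the Borel predicate $R$ and equality) and every tuple $\bar a\in (2^\omega)^n$, we have $(2^\omega,=,R)\models\phi(\bar a)$ iff $\Qp{\phi(c_{a_1},\dots,c_{a_n})}^M_{\bool{B}}=1_{\bool{B}}$ (the value is $0$ or $1$ because the constant functions are ``ground model'' objects, equivalently because $\Sigma_2$-statements about reals with real parameters are decided by the forcing). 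Since $1_{\bool{B}}\in G$ for every ultrafilter $G$, this gives the claim uniformly in $G$; and it reduces the theorem to a single assertion not mentioning $G$ at all.

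The next step is to make the identification $M\cong (\dot{2^\omega})^{V^{\bool{B}}}$ precise enough to transfer the boolean value $\Qp{\phi}^M_{\bool{B}}$ to the forcing value $\Qp{\phi}^{V^{\bool{B}}}$. Concretely: a continuous $f\colon\St(\bool{B})\to 2^\omega$ is coded by the sequence of regular open sets $b^f_n=\Reg{\{G: f(G)(n)=1\}}$, and conversely any such sequence $\ap{b_n:n\in\omega}$ in $\bool{B}$ gives a $\bool{B}$-name $\dot x_f$ for an element of $2^\omega$ by $\Qp{\dot x_f(n)=1}=b^f_n$. One checks that $\Qp{f=g}^M_{\bool{B}}=\Qp{\dot x_f=\dot x_g}^{V^{\bool{B}}}$ and, for the Borel relation $R$ with a fixed Borel code $\dot R$ absolutely interpreted in every forcing extension, $\Qp{R(f_1,\dots,f_n)}^M_{\bool{B}}=\Qp{\dot R(\dot x_{f_1},\dots,\dot x_{f_n})}^{V^{\bool{B}}}$; this last equality is exactly the content of Baire-category/forcing reasoning about the Baire-property set $(f_1\times\dots\times f_n)^{-1}[R]$ used already to \emph{define} $R^M$. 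By induction on formulas (the atomic case being the two identities just stated, Boolean connectives being immediate, and the existential quantifier matching because $M$ ranges exactly over all $\bool{B}$-names for reals, which by fullness of $V^{\bool{B}}$ and the maximum principle is what computes $\bigvee$), one gets $\Qp{\phi(f_1,\dots,f_n)}^M_{\bool{B}}=\Qp{\phi^{\text{rel}}(\dot x_{f_1},\dots,\dot x_{f_n})}^{V^{\bool{B}}}$ where $\phi^{\text{rel}}$ is $\phi$ with quantifiers relativized to ``being a real''.

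Finally I would close the argument with Shoenfield absoluteness. For $a_1,\dots,a_n\in 2^\omega$ in the ground model, $\dot x_{c_{a_i}}$ is (forced equal to) the canonical check-name $\check a_i$, so $\Qp{\phi(c_{a_1},\dots,c_{a_n})}^M_{\bool{B}}=\Qp{\phi^{V[\dot G]}(\check a_1,\dots,\check a_n)}_{\bool{B}}$ where now $\phi$ is read as a second-order-number-theory formula with real parameters $\bar a$. A $\Sigma_2$ formula over the structure $(2^\omega,=,R)$ — i.e.\ $\exists y_1\,\forall y_2\,\psi$ with $\psi$ arithmetic in the Borel predicate, hence absolutely $\Delta^1_1$ from a code — is a $\Sigma^1_2$ statement with real parameters; by Shoenfield's absoluteness theorem $\Sigma^1_2$ statements with real parameters from $V$ are absolute between $V$ and $V[G]$ (and $\Pi^1_2$ statements go up), so $(2^\omega)^V\models\phi(\bar a)$ implies $\Qp{\phi(\bar a)}_{\bool{B}}=1_{\bool{B}}$, and conversely if $\Qp{\phi(\bar a)}_{\bool{B}}\in G$ then $(2^\omega)^{V[G]}\models\phi(\bar a)$, whence by downward $\Sigma^1_2$-absoluteness $(2^\omega)^V\models\phi(\bar a)$. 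Running this through the identifications above yields $(2^\omega,=,R)\prec_{\Sigma_2}(M/G,=^M/G,R^M/G)$ for every $G\in\St(\bool{B})$. The main obstacle is the careful verification of the identification $M\cong(\dot{2^\omega})^{V^{\bool{B}}}$ at the level of boolean values — in particular that the regular-open-envelope operation used to define $R^M$ and $=^M$ really matches the forcing relation for the Borel code — and making sure the relativization $\phi\mapsto\phi^{\text{rel}}$ does not smuggle in quantifiers of complexity beyond $\Sigma^1_2$; everything else is bookkeeping via Theorem~\ref{them:LOSfbvm} and the classical Shoenfield theorem.
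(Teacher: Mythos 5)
Your argument is correct, and its front end --- identifying $M=C(\St(\bool{B}),2^\omega)$ with the $\bool{B}$-names for elements of $2^\omega$, matching $=^M$ and $R^M$ with the boolean values of the corresponding names via the Borel code, and using the Boolean-valued \L o\'s theorem to reduce the whole statement to the single assertion that $\Sigma^1_2$ formulas with ground-model real parameters receive boolean value $0_{\bool{B}}$ or $1_{\bool{B}}$ agreeing with their truth value in $V$ --- is exactly the paper's (Proposition~\ref{proposition: tauf-ftau}, Theorem~\ref{thm:dual2omegaBnames}, Theorem~\ref{them:LOSfbvm}); your observation that this handles arbitrary ultrafilters $G$ uniformly because $1_{\bool{B}}\in G$ is also the right way to deal with non-generic $G$. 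Where you genuinely diverge is the back end: you close by citing classical Shoenfield absoluteness as a black box, whereas the paper deliberately does not assume it. Instead it translates the $\Sigma^1_2$ statement into a $\Sigma_1$ statement over $H_{\omega_1}$ and proves Lemma~\ref{lem:CohAbs} directly: take a countable $M\prec H_\kappa$ containing $\bool{B}$ and the parameters, pass to its transitive collapse $N$, use $\mathsf{BCT}_1$ to produce an $N$-generic filter below the given condition (this is where Baire's category theorem, i.e.\ the forcing axiom $\FA_{\aleph_0}$, enters --- which is the whole point of the section), apply Cohen's forcing theorem to find a witness inside $N[G]\in H_{\omega_1}$, and conclude by $\Sigma_0$-absoluteness between transitive sets. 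Your route is shorter if Shoenfield is already available; the paper's is self-contained and in effect re-proves Shoenfield in this boolean-valued guise, exhibiting Cohen's absoluteness as a consequence of $\mathsf{BCT}_1$ rather than of a theorem the paper presents it as equivalent to. Both arguments require the same care at the point you flag, namely that the regular-open envelope defining $R^M$ coincides with the boolean value of the reinterpreted Borel code.
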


\section{Getting Cohen's absoluteness from Baire's category Theorem} \label{sec:CABCT}
Let us now show how Theorem \ref{thm:cohabs} is once again a consequence of forcing axioms.
To do so, we delve deeper into set theoretic techniques and assume 
the reader has some acquaintance with the forcing method.
We give below a brief review sufficient for our aims.

\subsection{Forcing}
Let $V$ denote the standard universe of sets and $\ZFC$ the standard first order axiomatization of set theory by the 
Zermelo-Frankel axioms.
For any complete boolean algebra $\bool{B}\in V$ let 
\[
V^{\bool{B}}=\bp{f:V^{\bool{B}}\to \bool{B}:\, f\in V\text{ is a function}}
\]
be the class of $\bool{B}$-names
with boolean relations $\in^{\bool{B}},\subseteq^{\bool{B}},=^{\bool{B}}:(V^{\bool{B}})^2\to\bool{B}$
given by:
\begin{enumerate}
\item 
\[ 
\in^{\bool{B}}(\tau,\sigma)=
\Qp{\tau \in \sigma} =
\bigvee_{\tau_0 \in \dom(\sigma) } ( \Qp{\tau = \tau_0} \land \sigma(\tau_0)).\] 
\item 
\[ 
\subseteq^{\bool{B}}(\tau,\sigma)=
\bigwedge_{\sigma_0 \in \dom(\tau)}(\neg \tau(\sigma_0) \lor\Qp{\sigma_0 \in \sigma}).
\]
\item 
\[
=^{\bool{B}}(\tau,\sigma)=\Qp{\tau = \sigma}= \Qp{\tau \subseteq \sigma}\land\Qp{\sigma \subseteq \tau }.
\]
\end{enumerate}

\begin{theorem}[Cohen's forcing theorem $I$]
$(V^{\bool{B}},\in^{\bool{B}},=^{\bool{B}})$ is a full boolean valued model which assigns the boolean
value $1_{\bool{B}}$ to
all axioms $\phi\in \ZFC$.
\end{theorem}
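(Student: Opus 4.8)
The plan is to verify the classical Cohen forcing theorem by induction on the complexity of formulas, exactly as one does for the usual forcing relation, but phrased in the boolean-valued vocabulary set up earlier. First I would fix a complete boolean algebra $\bool{B}$ and recall that $V^{\bool{B}}$ is defined by transfinite recursion on rank: $\tau\in V^{\bool{B}}$ iff $\tau$ is a function with domain a subset of $V^{\bool{B}}$ (of smaller rank) and range in $\bool{B}$. I would first establish the three structural axioms required of a $\bool{B}$-valued model from Definition~\ref{str2}: reflexivity $\Qp{\tau=\tau}=1_{\bool{B}}$, symmetry, the transitivity inequality $\Qp{\tau=\sigma}\wedge\Qp{\sigma=\chi}\le\Qp{\tau=\chi}$, and the substitution property for $\in^{\bool{B}}$. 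These are proved simultaneously by induction on the ranks of the names involved, unwinding the definitions of $\subseteq^{\bool{B}}$ and $\in^{\bool{B}}$ given in clauses (1)--(3); this is the standard but slightly delicate mutual induction (one needs, e.g., $\Qp{\tau=\sigma}\wedge\Qp{\chi\in\tau}\le\Qp{\chi\in\sigma}$ as an auxiliary claim).

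Next I would check fullness: given a formula $\phi(x,\bar y)$ and names $\bar\tau$, I must produce $\sigma\in V^{\bool{B}}$ with $\Qp{\exists x\,\phi(x,\bar\tau)}=\Qp{\phi(\sigma,\bar\tau)}$. The usual mixing argument does this: using that $\bool{B}$ is complete, take a maximal antichain $\{b_i:i\in I\}$ below $\Qp{\exists x\,\phi(x,\bar\tau)}$ together with names $\sigma_i$ such that $b_i\le\Qp{\phi(\sigma_i,\bar\tau)}$, and amalgamate the $\sigma_i$ into a single name $\sigma$ (the ``mixture'') whose domain is $\bigcup_i\dom(\sigma_i)$ with values adjusted by the $b_i$; one verifies $b_i\le\Qp{\sigma=\sigma_i}$ and hence $\Qp{\phi(\sigma,\bar\tau)}\ge\bigvee_i b_i=\Qp{\exists x\,\phi(x,\bar\tau)}$, the reverse inequality being the boolean-semantic clause for $\exists$. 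I should remark that this step uses the axiom of choice (to pick the $\sigma_i$ and to extend to a maximal antichain), consistent with the earlier observation that \L o\'s-type theorems need $\mathsf{AC}$.

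The heart of the theorem is that every axiom $\phi\in\ZFC$ gets boolean value $1_{\bool{B}}$. I would go through the axioms one at a time. Extensionality and Foundation follow fairly directly from the definitions of $=^{\bool{B}}$, $\subseteq^{\bool{B}}$ and an $\in^{\bool{B}}$-induction on names. Pairing, Union, Infinity, and the check name $\check x$ for $x\in V$ are handled by exhibiting explicit names (e.g.\ $\{\tau,\sigma\}^{\bool{B}}$ with domain $\{\tau,\sigma\}$ and constant value $1_{\bool{B}}$) and computing that the relevant boolean values are $1_{\bool{B}}$. Power Set and Separation/Replacement require the completeness of $\bool{B}$ more essentially: for Separation one forms, given $\tau$ and $\phi$, the name $\sigma$ with $\dom(\sigma)=\dom(\tau)$ and $\sigma(\tau_0)=\tau(\tau_0)\wedge\Qp{\phi(\tau_0,\dots)}$; for Power Set one takes the name whose domain is the set of all functions from $\dom(\tau)$ to $\bool{B}$ (this is where we crucially need $\bool{B}$ to be a set and $V^{\bool{B}}$ stratified by rank). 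Replacement needs a rank-reflection / mixing argument to bound the ranks of witnessing names. Finally, the Axiom of Choice in $V$ lifts to $\Qp{\mathsf{AC}}=1_{\bool{B}}$ by well-ordering the relevant name-set externally and transferring the well-order via a check name, again invoking fullness for the existential witness.

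I expect the main obstacle to be the bookkeeping in the simultaneous induction for the structural axioms (reflexivity/symmetry/transitivity of $=^{\bool{B}}$ together with the substitution laws for $\in^{\bool{B}}$ and $\subseteq^{\bool{B}}$): getting the induction hypotheses strong enough and correctly ordered by rank is the one genuinely fussy point, whereas the verification of the $\ZFC$ axioms, though long, is a routine sequence of explicit name constructions plus the mixing lemma. I would therefore isolate the structural induction and the mixing/fullness lemma as the two technical cores, cite \cite{JECH} or \cite[Chapter 3]{viale-notesonforcing} for the exhaustive details, and sketch only the representative axiom verifications (Extensionality, Power Set, Separation, Choice) in the text.
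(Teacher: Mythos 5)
Your outline is correct and is exactly the standard argument: the paper itself states Cohen's forcing theorem $I$ without proof, as classical background, implicitly deferring to \cite{JECH} and \cite[Chapter 3]{viale-notesonforcing}, and what you sketch (the simultaneous rank induction for the structural identities of $=^{\bool{B}}$ and $\in^{\bool{B}}$, fullness via the mixing lemma over a maximal antichain below $\Qp{\exists x\,\phi(x,\bar\tau)}$, and the axiom-by-axiom verification with explicit names for Pairing/Union/Separation/Power Set and a rank-reflection argument for Replacement) is precisely the proof found in those references. You have also correctly identified the two genuinely delicate points --- ordering the mutual induction for the substitution laws, and the mixing/fullness lemma with its use of $\mathsf{AC}$ --- so there is nothing to add beyond the bookkeeping you already flag.
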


$V$ is copied inside $V^{\bool{B}}$ as the family of $\bool{B}$-names
$\check{a}=\bp{\ap{\check{b},1_{\bool{B}}}:b\in a}$ and has the property that for all
$\Sigma_0$-formulae (i.e with quantifiers bounded to range over sets)
$\phi(x_0,\dots,x_n)$ and $a_0,\dots,a_n\in V$
\[
\Qp{\phi(\check{a}_0,\dots,\check{a}_n)}=1_{\bool{B}}
\mbox{ if and only if }
V\models\phi(a_0,\dots,a_n).\]

This procedure can be formalized in any first order model $(M,E,=)$ of $\ZFC$ for any
$\bool{B}\in M$ such that $(M,E,=)$ models that $\bool{B}$ is a complete boolean algebra.

Two ingredients are still missing to prove Cohen's absoluteness (Theorem~\ref{thm:cohabs})
from Baire's category theorem: the notion of an $M$-generic filter and the duality between 
$C(\St(\bool{B}),2^\omega)$ and the $\bool{B}$-names in $V^{\bool{B}}$ for elements of $2^\omega$.
We first deal with the duality.

\subsection{$C(\St(\bool{B}),2^\omega)$ is the family of $\bool{B}$-names for elements of $2^\omega$}\label{subsec:cstbisovb}

\begin{definition}
Let $\bool{B}$ be a complete boolean algebra.
Let $\sigma\in V^{\bool{B}}$ be a $\bool{B}$-name such
that $\Qp{\sigma: \check{\omega} \to \check{2}}_{\bool{B}}=1_{\bool{B}}$. 
We define $f_\sigma: \St(\bool{B}) \to 2^\omega$ by
\[
	f_\sigma(G)(n)=i \iff \Qp{\sigma(\check{n})=\check{i}}\in G.
\]
Conversely assume $g:\St(\bool{B}) \to 2^\omega$ is a continuous function, then define
\[
	\tau_g=\{\langle\check{(n, i)}, \{G: g(G)(n)=i\}\rangle: n\in\omega, i < 2\} \in V^{\bool{B}}.
\]
\end{definition}

Observe indeed that 
\[
	\{G\in \St(\bool{B}): g(G)(n)=i\}= g^{-1}[N_{n,i}],
\]
where $N_{n,i}=\{f \in 2^\omega : f(n) = i\}$. Since 
$g$ is continuous, $g^{-1}[N_{n,i}]$ is clopen and so it is an element of $\bool{B}$.

We can prove the following duality:

\begin{proposition}\label{proposition: tauf-ftau}
Assume that $\Qp{\sigma: \check{\omega} \to \check{2}}_{\bool{B}}=1_{\bool{B}}$ 
and $g: \St(\bool{B}) \to 2^\omega$ is continuous. Then
\begin{enumerate}
\item $\tau_g\in V^{\bool{B}}$;
\item $f_\sigma: \St(\bool{B}) \to 2^\omega$ is continuous;\label{item: fcont}
\item $\Qp{\tau_{f_\sigma}=\sigma}_{\bool{B}}=1_{\bool{B}}$;
\item $f_{\tau_g}=g$.
\end{enumerate}
In particular letting
\[
(2^{\omega})^{\bool{B}}=\bp{\sigma\in V^{\bool{B}}: \Qp{\sigma: \check{\omega} \to \check{2}}_{\bool{B}}=1_{\bool{B}}},
\]
the $2$-valued models $((2^{\omega})^{\bool{B}}/G,=^{\bool{B}}/G)$ and 
$(C(\St(\bool{B}),2^\omega)/G,=^{\St(\bool{B})}/G)$ are isomorphic for all $G\in \St(\bool{B})$ via the map
$[g]_G\mapsto[\tau_g]_G$.
\end{proposition}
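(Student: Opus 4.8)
The plan is to verify the four numbered items essentially by unwinding the definitions of $\tau_g$ and $f_\sigma$, and then to deduce the isomorphism statement by a diagram chase through the quotient by $G$. First I would check item (1): $\tau_g$ is a set of pairs whose second coordinates $\{G:g(G)(n)=i\}=g^{-1}[N_{n,i}]$ are clopen, hence elements of $\bool{B}=\CLOP(\St(\bool{B}))$ since $g$ is continuous (this is the observation already made right before the proposition), and whose first coordinates are checks of pairs $(n,i)\in\omega\times 2$; so $\tau_g$ is a genuine $\bool{B}$-name, and moreover $\Qp{\tau_g\colon\check\omega\to\check 2}_{\bool{B}}=1_{\bool{B}}$ because for each $n$ the antichain $\{g^{-1}[N_{n,0}],g^{-1}[N_{n,1}]\}$ partitions $1_{\bool{B}}$ (the sets $N_{n,0},N_{n,1}$ partition $2^\omega$).

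For item (2), continuity of $f_\sigma$: I would show that $f_\sigma^{-1}[N_{n,i}]=\Qp{\sigma(\check n)=\check i}_{\bool{B}}$, viewed as a clopen subset $N_b$ of $\St(\bool{B})$ with $b=\Qp{\sigma(\check n)=\check i}$. Indeed $G\in f_\sigma^{-1}[N_{n,i}]$ iff $f_\sigma(G)(n)=i$ iff $\Qp{\sigma(\check n)=\check i}\in G$ by definition, iff $G\in N_b$. Since the $N_{n,i}$ generate the topology of $2^\omega$, $f_\sigma$ is continuous. For item (3), $\Qp{\tau_{f_\sigma}=\sigma}_{\bool{B}}=1_{\bool{B}}$: both names are forced to be functions $\check\omega\to\check 2$, so it suffices to check $\Qp{\tau_{f_\sigma}(\check n)=\check i}=\Qp{\sigma(\check n)=\check i}$ for all $n,i$; but by definition of $\tau_g$ applied to $g=f_\sigma$, the name $\tau_{f_\sigma}$ assigns to the pair $\check{(n,i)}$ the boolean value $\{G:f_\sigma(G)(n)=i\}$, which by the computation just made equals $N_b$ with $b=\Qp{\sigma(\check n)=\check i}$, i.e. equals $\Qp{\sigma(\check n)=\check i}$ under the identification $\bool{B}\cong\CLOP(\St(\bool{B}))$. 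Hence the two names agree. For item (4), $f_{\tau_g}=g$: for $G\in\St(\bool{B})$ and $n\in\omega$, $f_{\tau_g}(G)(n)=i$ iff $\Qp{\tau_g(\check n)=\check i}\in G$ iff $\{G':g(G')(n)=i\}\in G$; since these two sets (for $i=0,1$) partition $\St(\bool{B})$ into clopen pieces exactly one of which lies in the ultrafilter $G$, and $G$ itself lies in the piece $\{G':g(G')(n)=g(G)(n)\}$, we get $f_{\tau_g}(G)(n)=g(G)(n)$.

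For the final ``in particular'' clause, I would argue that the map $[g]_G\mapsto[\tau_g]_G$ is well-defined and bijective using items (3) and (4) plus Boolean-valued \L o\'s's Theorem (Theorem~\ref{them:LOSfbvm}) applied to the fullness of both $\bool{B}$-valued models. Well-definedness and injectivity: $[g]_G=[g']_G$ in $C(\St(\bool{B}),2^\omega)$ means $\Qp{g=g'}^{\St(\bool{B})}\in G$, i.e. $\Reg{\{G':g(G')=g'(G')\}}\in G$; one checks this is equivalent to $\Qp{\tau_g=\tau_{g'}}_{\bool{B}}\in G$ by comparing, for each $n,i$, the boolean values $\{G':g(G')(n)=i\}$ and $\{G':g'(G')(n)=i\}$ and using that equality of functions into $2^\omega$ is the infimum over $n$ of the pointwise equalities. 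Surjectivity: given $\sigma\in(2^\omega)^{\bool{B}}$, item (3) gives $\Qp{\tau_{f_\sigma}=\sigma}=1_{\bool{B}}\in G$, so $[\sigma]_G$ is the image of $[f_\sigma]_G$; here I would first note that by item (2) $f_\sigma$ is indeed an element of $C(\St(\bool{B}),2^\omega)$. Finally, that the map respects $=$ (and hence is an isomorphism of the $2$-valued equality-structures) follows from the same computation identifying $\Qp{g=g'}^{\St(\bool{B})}$ with $\Qp{\tau_g=\tau_{g'}}_{\bool{B}}$ modulo $G$. I expect the main technical obstacle to be bookkeeping the two different ``Baire-property / regular-open'' conventions — the equality predicate on $C(\St(\bool{B}),2^\omega)$ is defined via $\Reg{\{G:g(G)=g'(G)\}}$ whereas on $(2^\omega)^{\bool{B}}$ it is the genuine infimum $\bigwedge_n\Qp{\sigma(\check n)=\check\tau(\check n)}$ — and checking carefully that these two boolean values have the same image in $\bool{B}/G$ for every ultrafilter $G$, which is where fullness and the clopen-versus-regular-open identification do the real work; everything else is a routine unwinding of definitions.
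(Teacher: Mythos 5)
Your proof is correct and follows exactly the intended route: the paper itself omits the argument (deferring to the cited theses of Vaccaro and Parente), and the standard proof is precisely your definitional unwinding of $\tau_g$ and $f_\sigma$ together with the identification $\bool{B}\cong\CLOP(\St(\bool{B}))=\RO(\St(\bool{B}))$, which as you note is what reconciles $\Reg{\bp{G:g(G)=g'(G)}}$ with the infimum $\bigwedge_n\Qp{\tau_g(\check n)=\tau_{g'}(\check n)}$ computed in $\bool{B}$. The only cosmetic remarks are that the appeal to fullness and \L o\'s's Theorem in the last paragraph is unnecessary (only atomic formulas are involved, so the quotient construction of Definition~\ref{ultraquot} suffices), and that one should note explicitly that $\Qp{\sigma:\check\omega\to\check 2}_{\bool{B}}=1_{\bool{B}}$ is what makes $f_\sigma$ a well-defined (total, single-valued) function, since for each $n$ the values $\Qp{\sigma(\check n)=\check 0}$ and $\Qp{\sigma(\check n)=\check 1}$ are complementary and an ultrafilter contains exactly one of them.
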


This is just part of the duality, as the duality can lift the isomorphism also to all 
$\bool{B}$-Baire relations on $2^\omega$, among which are all Borel relations.
Recall that for any given topological space $(X,\tau)$ a subset $Y$ of $X$ is meager for $\tau$
if $Y$ is contained in the countable union of closed nowhere dense (i.e. with complement dense open)
subsets of $X$. $Y$ has the Baire property if $Y\Delta A$ is meager for some unique regular open set $A\in\tau$.

\begin{definition}
$R\subseteq (2^\omega)^n$ 
is a $\bool{B}$-Baire subset of $(2^\omega)^n$ if 
for all continuous functions $f_1,\dots,f_n:\St(\bool{B})\to 2^\omega$ we have that
\[
(f_1\times\dots\times f_n)^{-1}[A]=\bp{G: f_1\times\dots\times f_n(G)\in A}
\]
has the Baire property in $\St(\bool{B})$.

$R\subseteq (2^\omega)^n$ is universally Baire if it is $\bool{B}$-Baire for all
complete boolean algebras $\bool{B}$.
\end{definition}
It can be shown in $\ZFC$ 
that Borel (and even analytic) subsets of $(2^\omega)^n$ are universally Baire (see~\cite[Def. 32.21]{JECH}).

An important result of Feng, Magidor, and Woodin \cite{FENMAGWOO} can be restated as follows:
\begin{theorem}\label{thm:dual2omegaBnames}
$R\subseteq (2^\omega)^n$ is $\bool{B}$-Baire if and only if there exist
$\dot{R}^{\bool{B}}\in V^{\bool{B}}$ such that 
\[
\Qp{\dot{R}^{\bool{B}}\subseteq \check{(2^\omega)^n}}=1_{\bool{B}},
\]
and
for all $\tau_1,\dots,\tau_n\in (2^\omega)^{\bool{B}}$
\[
\Reg{\bp{G: R(f_{\tau_1}(G),\dots,f_{\tau_n}(G))}}=
\Qp{(\tau_1,\dots,\tau_n)\in\dot{R}^{\bool{B}}}.
\]
\end{theorem}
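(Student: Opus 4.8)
The plan is to prove the two implications separately; the real content lies in producing the name $\dot R^{\bool{B}}$ out of $\bool{B}$-Baireness, while the converse is essentially a matter of unwinding notation. For the converse, assume $\dot R^{\bool{B}}\in V^{\bool{B}}$ is as in the statement and let $g_1,\dots,g_n\colon\St(\bool{B})\to 2^\omega$ be continuous. By part~(4) of Proposition~\ref{proposition: tauf-ftau} we may write $g_i=f_{\sigma_i}$ with $\sigma_i:=\tau_{g_i}\in(2^\omega)^{\bool{B}}$ (that $\tau_{g_i}$ names a total function $\check{\omega}\to\check{2}$ is immediate from the definition of $\tau_g$). Then $(g_1\times\dots\times g_n)^{-1}[R]=\bp{G\in\St(\bool{B}):R(f_{\sigma_1}(G),\dots,f_{\sigma_n}(G))}$, and the hypothesis asserts that the regularization of this set equals the regular open set $\Qp{(\sigma_1,\dots,\sigma_n)\in\dot R^{\bool{B}}}\in\bool{B}$; recalling that ``$\Reg{Y}$'' denotes the regular open set equivalent to $Y$ modulo meager, which presupposes that $Y$ has the Baire property, this is exactly the statement that $(g_1\times\dots\times g_n)^{-1}[R]$ has the Baire property in $\St(\bool{B})$. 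As the $g_i$ were arbitrary continuous maps, $R$ is $\bool{B}$-Baire.

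For the forward direction, assume $R$ is $\bool{B}$-Baire. For each $\vec\tau=(\tau_1,\dots,\tau_n)\in((2^\omega)^{\bool{B}})^n$ put
\[
A_{\vec\tau}=\bp{G\in\St(\bool{B}):R(f_{\tau_1}(G),\dots,f_{\tau_n}(G))}.
\]
The maps $f_{\tau_i}$ are continuous by Proposition~\ref{proposition: tauf-ftau}(\ref{item: fcont}), so $A_{\vec\tau}$ has the Baire property and $\Reg{A_{\vec\tau}}$ is a well-defined element of $\bool{B}$, which I identify as usual with the algebra $\RO(\St(\bool{B}))$ of regular open subsets of its Stone space. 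Letting $\sigma_{\vec\tau}\in V^{\bool{B}}$ be the canonical $\bool{B}$-name for the tuple $(\tau_1,\dots,\tau_n)$, I would set
\[
\dot R^{\bool{B}}=\bp{\ap{\sigma_{\vec\tau},\,\Reg{A_{\vec\tau}}}:\vec\tau\in((2^\omega)^{\bool{B}})^n}.
\]
Every name occurring in $\dot R^{\bool{B}}$ is a name for an $n$-tuple of elements of the Cantor space of $V^{\bool{B}}$, so $\dot R^{\bool{B}}$ is forced to be a set of such tuples; this is the first displayed requirement. The crux is the identity $\Qp{(\tau_1,\dots,\tau_n)\in\dot R^{\bool{B}}}=\Reg{A_{\vec\tau}}$.

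To prove it, expand the left-hand side by the definition of $\in^{\bool{B}}$:
\[
\Qp{(\tau_1,\dots,\tau_n)\in\dot R^{\bool{B}}}=\bigvee_{\vec\rho}\bigl(\Qp{\sigma_{\vec\tau}=\sigma_{\vec\rho}}\wedge\Reg{A_{\vec\rho}}\bigr),
\]
with $\vec\rho$ ranging over $((2^\omega)^{\bool{B}})^n$; the summand $\vec\rho=\vec\tau$ gives ``$\geq$''. For ``$\leq$'' it is enough to check $b_{\vec\rho}\wedge\Reg{A_{\vec\rho}}\leq\Reg{A_{\vec\tau}}$, where $b_{\vec\rho}:=\Qp{\sigma_{\vec\tau}=\sigma_{\vec\rho}}=\bigwedge_i\Qp{\tau_i=\rho_i}$. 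Under the identification of Subsection~\ref{subsec:cstbisovb} together with the definition of the equality value on $C(\St(\bool{B}),2^\omega)$ recalled in Section~\ref{sec:LTGA}, one has $\Qp{\tau_i=\rho_i}=\Reg{\bp{G:f_{\tau_i}(G)=f_{\rho_i}(G)}}$; hence $N_{b_{\vec\rho}}$ is contained in each of these regular open sets, so $E:=\bp{G:f_{\tau_i}(G)=f_{\rho_i}(G)\text{ for all }i}$ is comeager in the clopen set $N_{b_{\vec\rho}}$. On $E$ the sets $A_{\vec\tau}$ and $A_{\vec\rho}$ coincide, so $A_{\vec\tau}\Delta A_{\vec\rho}$ is meager inside $N_{b_{\vec\rho}}$; since both sets have the Baire property and $\Reg{\cdot}$ commutes with restriction to the clopen piece $N_{b_{\vec\rho}}$, this yields $\Reg{A_{\vec\tau}}\wedge b_{\vec\rho}=\Reg{A_{\vec\rho}}\wedge b_{\vec\rho}$, and in particular $b_{\vec\rho}\wedge\Reg{A_{\vec\rho}}\leq\Reg{A_{\vec\tau}}$, as wanted.

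The forward direction carries the Feng--Magidor--Woodin content, and within it the one genuinely delicate step is the coherence computation just sketched: transporting the equality value $\Qp{\tau_i=\rho_i}$ across the duality of Subsection~\ref{subsec:cstbisovb}, and then carrying out the routine but fussy bookkeeping with the Baire property and with $\Reg{\cdot}$ under passage to the clopen set $N_{b_{\vec\rho}}$; everything else is formal. One notational point to keep in mind throughout: the requirement $\Qp{\dot R^{\bool{B}}\subseteq\check{(2^\omega)^n}}=1_{\bool{B}}$ must be read as saying that $\dot R^{\bool{B}}$ is forced to be a set of $n$-tuples of reals of $V^{\bool{B}}$, not merely of ground-model reals, since for a nontrivial $\bool{B}$ the name $\dot R^{\bool{B}}$ necessarily mentions $\bool{B}$-names for reals outside $V$.
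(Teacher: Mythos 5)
The paper does not actually prove this theorem: it is presented as a restatement of a result of Feng, Magidor, and Woodin and cited from \cite{FENMAGWOO}, so there is no internal proof to compare against. Judged on its own terms, your argument is the standard one, and its real content --- the coherence computation showing $\Qp{\sigma_{\vec\tau}=\sigma_{\vec\rho}}\wedge\Reg{A_{\vec\rho}}\leq\Reg{A_{\vec\tau}}$ by observing that $\bp{G: f_{\tau_i}(G)=f_{\rho_i}(G)\text{ for all }i}$ is comeager in $N_{b_{\vec\rho}}$ and that $\Reg{\cdot}$ restricts correctly to clopen pieces --- is correct; the identity $\Qp{\tau_i=\rho_i}=\Reg{\bp{G:f_{\tau_i}(G)=f_{\rho_i}(G)}}$ you need does follow from Proposition~\ref{proposition: tauf-ftau}, since two elements of $\bool{B}$ lying in the same ultrafilters coincide. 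Your reading of the two displayed conditions (both stated with some abuse of notation) is also the intended one.

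There is, however, one genuine defect in the construction: $\dot R^{\bool{B}}$ as you define it is not a $\bool{B}$-name. The class $(2^\omega)^{\bool{B}}$ is a proper class of names (any $\sigma$ with $\Qp{\sigma:\check\omega\to\check 2}=1_{\bool{B}}$ has equivalent names of arbitrarily large rank, e.g.\ $\sigma\cup\bp{\ap{\check x,0_{\bool{B}}}}$ for any set $x$), so $\bp{\ap{\sigma_{\vec\tau},\Reg{A_{\vec\tau}}}:\vec\tau\in((2^\omega)^{\bool{B}})^n}$ is a proper class and cannot belong to $V^{\bool{B}}$; likewise the supremum $\bigvee_{\vec\rho}$ in your membership computation is class-indexed. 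The repair is standard and uses exactly the duality you already invoke: let $\vec\tau$ range only over $n$-tuples of the canonical names $\tau_g$ for $g\in C(\St(\bool{B}),2^\omega)$, which form a set, so that $\dot R^{\bool{B}}$ is set-sized; then for an arbitrary $\vec\tau\in((2^\omega)^{\bool{B}})^n$ use $\Qp{\tau_{f_{\tau_i}}=\tau_i}=1_{\bool{B}}$ from Proposition~\ref{proposition: tauf-ftau} together with your coherence computation to conclude that $\Qp{(\tau_1,\dots,\tau_n)\in\dot R^{\bool{B}}}=\Reg{A_{\vec\tau}}$ still holds for all such $\vec\tau$. With that emendation the argument is complete.
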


In particular an easy corollary is the following:
\begin{theorem}
Let $R\subseteq (2^\omega)^n$ be a $\bool{B}$-baire relation.
Then the map $[f]_G\mapsto [\tau_f]_G$ implements an isomorphism between
\[
\ap{C(\St(\bool{B})/G,R^{\St(\bool{B})}/G}\cong \ap{(2^{\omega})^{\bool{B}}/G,\dot{R}^{\bool{B}}/G}
\]
for any $G\in\St(\bool{B})$.
\end{theorem}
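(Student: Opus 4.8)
The plan is to combine the two dualities already established in the excerpt. On the one hand, Proposition~\ref{proposition: tauf-ftau} gives, for each $G\in\St(\bool{B})$, an isomorphism of $2$-valued structures (for equality only)
\[
\Phi_G\colon (C(\St(\bool{B}),2^\omega),=^{\St(\bool{B})}/G)\longrightarrow((2^\omega)^{\bool{B}}/G,=^{\bool{B}}/G),\qquad [g]_G\mapsto[\tau_g]_G,
\]
so what remains is to check that this very map also transports the interpretation of the relation symbol, i.e. that $[g_1]_G,\dots,[g_n]_G$ satisfy $R^{\St(\bool{B})}/G$ exactly when $[\tau_{g_1}]_G,\dots,[\tau_{g_n}]_G$ satisfy $\dot{R}^{\bool{B}}/G$. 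First I would recall that by the definition of the boolean interpretation of $R$ on $M=C(\St(\bool{B}),2^\omega)$ (from the paragraph introducing $R^M$ in Section~\ref{sec:LTGA}), $R^{\St(\bool{B})}(g_1,\dots,g_n)=\Reg{\bp{G':R(g_1(G'),\dots,g_n(G'))}}$, and that by Theorem~\ref{thm:dual2omegaBnames} this is exactly $\Qp{(\tau_{g_1},\dots,\tau_{g_n})\in\dot{R}^{\bool{B}}}_{\bool{B}}$, using clause (4) of Proposition~\ref{proposition: tauf-ftau} which says $f_{\tau_g}=g$.

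With that identity of boolean values in hand, the rest is the standard quotient argument. Passing to the ultrafilter $G$, by the definition of the quotient structure $\mathcal{M}/G$ (Definition~\ref{ultraquot}) we have $([g_1]_G,\dots,[g_n]_G)\in R^{\St(\bool{B})}/G$ iff $R^{\St(\bool{B})}(g_1,\dots,g_n)\in G$, and similarly $([\tau_{g_1}]_G,\dots,[\tau_{g_n}]_G)\in\dot{R}^{\bool{B}}/G$ iff $\Qp{(\tau_{g_1},\dots,\tau_{g_n})\in\dot{R}^{\bool{B}}}\in G$; since the two boolean values coincide, the two membership statements are equivalent. One small bookkeeping point to address is well-definedness: different representatives $g,g'$ of the same class have $\Qp{\tau_g=\tau_{g'}}_{\bool{B}}=\Qp{g=g'}^{\St(\bool{B})}_{\bool{B}}\in G$ (again by Proposition~\ref{proposition: tauf-ftau}, clause (3) applied appropriately, together with the congruence axiom for $=^{\bool{B}}$), so the map $[g]_G\mapsto[\tau_g]_G$ is a well-defined bijection respecting $=$; then the congruence condition (3) in Definition~\ref{str2} for the forcing relation $\dot{R}^{\bool{B}}$ guarantees the relation-preservation statement is independent of representatives. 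Surjectivity onto $(2^\omega)^{\bool{B}}/G$ comes from clause (3), $\Qp{\tau_{f_\sigma}=\sigma}_{\bool{B}}=1_{\bool{B}}$, so every name $\sigma$ is $G$-equivalent to some $\tau_g$ with $g=f_\sigma$ continuous by clause (2).

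I do not expect a genuine obstacle here: the theorem is explicitly flagged in the excerpt as ``an easy Corollary'' of Theorem~\ref{thm:dual2omegaBnames}, and every ingredient — the isomorphism for $=$, the identity of boolean values for $R$, and the generalities about quotients by ultrafilters — is already available. The only place demanding a little care is making sure the regularization operator $\Reg{\cdot}$ appearing in the definition of $R^{\St(\bool{B})}$ is matched against the $\Reg{\cdot}$ appearing in the statement of Theorem~\ref{thm:dual2omegaBnames}; but these are literally the same expression $\Reg{\bp{G':R(f_{\tau_1}(G'),\dots,f_{\tau_n}(G'))}}$ once one substitutes $f_{\tau_{g_i}}=g_i$, so no work is needed beyond pointing this out. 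Thus the proof is: cite Proposition~\ref{proposition: tauf-ftau} for the $=$-isomorphism, cite Theorem~\ref{thm:dual2omegaBnames} for equality of the relevant boolean values, and invoke Definition~\ref{ultraquot} to pass to the quotient.
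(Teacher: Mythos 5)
Your argument is correct and is exactly the derivation the paper intends: it states this result as an immediate corollary of Theorem~\ref{thm:dual2omegaBnames} without writing out a proof, and the steps you supply — the equality isomorphism from Proposition~\ref{proposition: tauf-ftau}, the identification $R^{\St(\bool{B})}(g_1,\dots,g_n)=\Qp{(\tau_{g_1},\dots,\tau_{g_n})\in\dot{R}^{\bool{B}}}$ via $f_{\tau_{g_i}}=g_i$, and the passage to the quotient by $G$ as in Definition~\ref{ultraquot} — are precisely the intended ones. The well-definedness and surjectivity checks you add are the right bookkeeping and raise no issues.
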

These results can be suitably generalized to arbitrary Polish spaces. We refer the reader to~\cite{VIAVAC15} 
and~\cite{vaccaro-tesi}. \cite{VIASCHANUEL} gives an application of this result to tackle a problem in number theory
related to Schanuel's conjecture.

%

\subsection{$M$-generic filters and Cohen's absoluteness}

\begin{definition}
Let $(P,\leq)$ be a partial order and $M$ be a set.
A subset $G$ of $P$ is $M$-generic if $G\cap D$ is non-empty 
for all $D\in M$ predense subset of $P$.
\end{definition}

By $\mathsf{BCT}_1$ every countable set $M$ admits $M$-generic filters for all partial orders $P$.

\begin{theorem}[Cohen's forcing theorem II]
Assume $(N,\in)$ is a transitive model of $\ZFC$,
$\bool{B}\in N$ is a complete boolean algebra in $N$, and
$G\in\St(\bool{B})$ is an $N$-generic filter for $\bool{B}^+$.

Let
\begin{align*}
\val_G:& N^{\bool{B}}\to V\\
& \sigma\mapsto\sigma_G=\bp{\tau_G: \exists b\in G\, \ap{\tau,b}\in\sigma},
\end{align*}
and $N[G]=\val_G[N^{\bool{B}}]$.

Then $N[G]$ is transitive, the map $[\sigma]_G\mapsto\sigma_G$ is the Mostowski collapse
of the Tarski models $\ap{N^{\bool{B}}/G,\in^{\bool{B}}/G}$ and induces an isomorphism of
this model with the model $\ap{N[G],\in}$.

In particular for all formulae $\phi(x_1,\dots,x_n)$ and $\tau_1\dots,\tau_n\in N^{\bool{B}}$
\[
\ap{N[G],\in}\models\phi((\tau_1)_G,\dots,(\tau_n)_G)
\]
if and only if $\Qp{\phi(\tau_1,\dots,\tau_n)}\in G$.
\end{theorem}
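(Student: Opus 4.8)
The plan is to prove Cohen's forcing theorem II (the Mostowski collapse of the quotient $\ap{N^{\bool{B}}/G,\in^{\bool{B}}/G}$ onto $\ap{N[G],\in}$) in three stages: first show the collapse map is well-defined and injective, then that it is an $\in$-isomorphism, then deduce the forcing theorem for arbitrary formulas. Throughout I would work inside $N$ as much as possible, using that $N$ is a transitive model of $\ZFC$ and that $\bool{B}\in N$ is complete \emph{in the sense of $N$}; the genericity of $G$ over $N$ is what bridges the $\bool{B}$-valued semantics of $N^{\bool{B}}$ and the two-valued semantics of $N[G]$.

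First I would check that $\val_G$ is well-defined by $\in^{\bool{B}}$-rank recursion on names and that for $\sigma,\tau\in N^{\bool{B}}$ one has $\sigma_G=\tau_G$ iff $\Qp{\sigma=\tau}\in G$, and $\sigma_G\in\tau_G$ iff $\Qp{\sigma\in\tau}\in G$. The forward directions are straightforward from the definitions of $=^{\bool{B}}$, $\in^{\bool{B}}$ and $\val_G$; the reverse directions are where genericity enters. For instance, if $\Qp{\sigma\in\tau}\in G$ then since $\Qp{\sigma\in\tau}=\bigvee_{\tau_0\in\dom\tau}(\Qp{\sigma=\tau_0}\wedge\tau(\tau_0))$ and this join is computed in $N$, the set $D$ of $b\in\bool{B}^+$ that either are disjoint from $\Qp{\sigma\in\tau}$ or lie below some $\Qp{\sigma=\tau_0}\wedge\tau(\tau_0)$ is open dense and lies in $N$; by $N$-genericity $G$ meets $D$, and since $\Qp{\sigma\in\tau}\in G$ we get some $\tau_0\in\dom\tau$ with $\Qp{\sigma=\tau_0}\in G$ and $\tau(\tau_0)\in G$, hence $\sigma_G=(\tau_0)_G\in\tau_G$ by induction. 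The analogous density argument handles $\subseteq^{\bool{B}}$ and hence $=^{\bool{B}}$. This simultaneously shows $[\sigma]_G\mapsto\sigma_G$ is well-defined and injective, and that it preserves and reflects $\in$; since $N[G]$ is the image and is transitive (a name's value is built from values of names of lower rank), the map is the Mostowski collapse of $\ap{N^{\bool{B}}/G,\in^{\bool{B}}/G}$.

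From here the forcing theorem for all formulas $\phi$ follows by induction on complexity. Atomic formulas are exactly the equivalences just established. The Boolean connectives are immediate since $G$ is an ultrafilter: $\neg$ corresponds to complementation, $\wedge$ to meet, and $G$ respects both. For the existential quantifier, fullness of $V^{\bool{B}}$ (Cohen's forcing theorem I), relativized to $N$, gives for any $\tau_1,\dots,\tau_n\in N^{\bool{B}}$ a name $\sigma\in N^{\bool{B}}$ with $\Qp{\exists x\,\phi(x,\vec\tau)}=\Qp{\phi(\sigma,\vec\tau)}$, so if the left side is in $G$ then $\Qp{\phi(\sigma,\vec\tau)}\in G$, and by induction $N[G]\models\phi(\sigma_G,\vec\tau_G)$, witnessing the existential; conversely any witness in $N[G]$ is $\sigma_G$ for some $\sigma\in N^{\bool{B}}$, and $\Qp{\phi(\sigma,\vec\tau)}\le\Qp{\exists x\,\phi(x,\vec\tau)}$ pushes membership in $G$ upward. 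Since $N^{\bool{B}}/G$ is a Tarski model by Boolean-valued \L o\'s (Theorem~\ref{them:LOSfbvm}), the collapse transports this to $\ap{N[G],\in}$, giving the stated equivalence.

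The main obstacle, as usual in this kind of argument, is the reverse direction of the atomic case and the existential step: one must be careful that the relevant dense sets and Skolem-like witnessing names genuinely belong to $N$, i.e. that completeness and fullness are invoked \emph{inside} $N$ rather than in $V$, and that $N$-genericity of $G$ is exactly strong enough to meet them. Everything else is bookkeeping with the recursive definitions of $\in^{\bool{B}}$, $\subseteq^{\bool{B}}$, $=^{\bool{B}}$ and $\val_G$.
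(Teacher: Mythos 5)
The paper states Cohen's forcing theorem II without proof, recalling it as a standard fact from the forcing method, so there is no in-paper argument to compare against; your proof is the standard textbook one and is correct. You give the right decomposition --- simultaneous rank induction for the atomic cases via density arguments that convert $\Qp{\sigma\in\tau}\in G$ and $\Qp{\sigma\subseteq\tau}\in G$ into semantic facts about $\sigma_G,\tau_G$, then induction on formula complexity using ultrafilterhood for the connectives and fullness for $\exists$ --- and you correctly flag the one point where care is genuinely needed, namely that the dense sets, the suprema defining $\in^{\bool{B}}$ and $\Qp{\exists x\,\phi}$, and the witnessing names must all be computed \emph{inside} $N$, with $N$-genericity (not mere ultrafilterhood) ensuring that $G$ respects these $N$-computed joins. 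The only caveat is your closing appeal to the boolean-valued \L o\'s theorem (Theorem~\ref{them:LOSfbvm}): as stated it concerns a genuinely complete $\bool{B}$ and an arbitrary ultrafilter, whereas here $\bool{B}$ is complete only in the sense of $N$, so it cannot be invoked off the shelf --- but your own density argument already supplies exactly the needed substitute, so nothing is actually missing.
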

Recall that:
\begin{itemize}
\item
For any infinite cardinal $\lambda$, $H_\lambda$ is the set of all sets $a\in V$ such that
$|\trcl(a)|<\lambda$ (where $\trcl(a)$ is the transitive closure of the set $a$).
\item
If $\kappa$ is a strongly inaccessible cardinal (i.e. regular and strong limit),
$H_\kappa$ is a transitive model of $\ZFC$.
\item
A property
$R\subseteq (2^\omega)^n$ is $\Sigma^1_2$, if it is of the form
\[
R=\bp{(a_1,\dots,a_n)\in (2^\omega)^n: \exists y\in 2^\omega\,\forall x\in 2^\omega \,S(x,y,a_1,\dots,a_n)}
\]
with $S\subseteq (2^\omega)^{n+2}$ a Borel relation.
\item
If $\phi(x_0,\dots,x_n)$ is a $\Sigma_0$-formula and $M\subseteq N$ are transitive sets or classes,
then for all $a_0,\dots,a_n\in M$
\[
M\models \phi(a_0,\dots,a_n)\text{ if and only if }N\models \phi(a_0,\dots,a_n).
\]
\end{itemize}

Observe that 
for any theory $T\supseteq\ZFC$ there is a recursive translation of
 $\Sigma^1_2$-properties (provably $\Sigma^1_2$ over $T$)
 into $\Sigma_1$-properties over $H_{\omega_1}$ (provably $\Sigma_1$ over the same theory $T$)
\cite[Lemma 25.25]{JECH}. 

\begin{lemma}
\label{lem:CohAbs}
Assume $\phi(x,r)$ is a $\Sigma_0$-formula in the parameter $\vec{r}\in (2^\omega)^n$.
Then the following are equivalent:
\begin{enumerate}
\item
$H_{\omega_1}\models\exists x\phi(x,r)$.
\item
For all complete boolean algebra $\bool{B}$
$\Qp{\exists x\phi(x,r)}=1_{\bool{B}}$.
\item
There is a complete boolean algebra $\bool{B}$ such that 
$\Qp{\exists x\phi(x,r)}>0_{\bool{B}}$.
\end{enumerate}
\end{lemma}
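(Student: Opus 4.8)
The plan is to prove the cycle of implications $(1)\Rightarrow(2)\Rightarrow(3)\Rightarrow(1)$, using the forcing machinery recalled above together with Baire's category theorem in the form of $\mathsf{BCT}_1$. The implication $(2)\Rightarrow(3)$ is immediate, since any complete boolean algebra (e.g.\ the trivial one $2$) witnesses $(3)$ once $(2)$ holds, and $1_{\bool B}>0_{\bool B}$. So the work is concentrated in $(1)\Rightarrow(2)$ and $(3)\Rightarrow(1)$, and the heart of the matter — the place where forcing axioms really enter — is $(3)\Rightarrow(1)$.

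For $(1)\Rightarrow(2)$, fix a complete boolean algebra $\bool B$ and suppose $H_{\omega_1}\models\exists x\,\phi(x,\vec r)$, say witnessed by some $a\in 2^\omega$ (after coding, an element of $H_{\omega_1}$ witnessing a $\Sigma_1$ statement over $H_{\omega_1}$ may be taken in $2^\omega$). Since $\phi$ is $\Sigma_0$ and $\vec r,a\in 2^\omega\subseteq V$, we have $\Qp{\phi(\check a,\check{\vec r})}=1_{\bool B}$ by the $\Sigma_0$-absoluteness of the check-names recorded in the excerpt; hence $\Qp{\exists x\,\phi(x,\check{\vec r})}=1_{\bool B}$. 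One should note here that $\phi$ being $\Sigma_0$, the witness $x$ ranges over $2^\omega$ on both sides, so no reflection of unbounded quantifiers is needed: this is a genuinely upward-absolute $\Sigma_1$-statement.

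For $(3)\Rightarrow(1)$, assume $\bool B$ is a complete boolean algebra with $\Qp{\exists x\,\phi(x,\check{\vec r})}=b>0_{\bool B}$, and fix a full name $\sigma\in V^{\bool B}$ for an element of $2^\omega$ with $\Qp{\phi(\sigma,\check{\vec r})}\ge b$ (fullness of $V^{\bool B}$, from Cohen's forcing theorem I, supplies such a $\sigma$). The strategy is to produce a \emph{single} honest set $a\in 2^\omega$ with $H_{\omega_1}\models\phi(a,\vec r)$ by evaluating $\sigma$ along a sufficiently generic filter and pulling the statement back down. Concretely: let $N\prec H_\theta$ be countable with $\bool B,\sigma,\vec r,b\in N$ for $\theta$ large regular, and let $\bar N$ be its transitive collapse, $\bar{\bool B},\bar\sigma,\bar{\vec r},\bar b$ the images; note $\vec r$ is fixed by the collapse since $2^\omega\cap N$ collapses to itself on each real, and we may arrange $\bar{\vec r}=\vec r$ by the usual coding. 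By $\mathsf{BCT}_1$ applied to the countable set $\bar N$, there is an $\bar N$-generic filter $G$ on $\bar{\bool B}^+$ with $\bar b\in G$. By Cohen's forcing theorem II, $\bar N[G]$ is a transitive model of $\ZFC$ and $\bar N[G]\models\phi(\bar\sigma_G,\vec r)$ since $\bar b\in G$ and $\Qp{\phi(\bar\sigma,\check{\vec r})}^{\bar{\bool B}}\ge\bar b$. Now $\bar\sigma_G\in 2^\omega$ is a genuine real, and $\phi$ is $\Sigma_0$, hence absolute between the transitive model $\bar N[G]$ and $V$ (more precisely between $H_{\omega_1}^{\bar N[G]}$ and $H_{\omega_1}$): we get $H_{\omega_1}\models\phi(\bar\sigma_G,\vec r)$, so $H_{\omega_1}\models\exists x\,\phi(x,\vec r)$, which is $(1)$.

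The main obstacle, and the point deserving care, is the passage through the countable elementary submodel and its collapse: one must check that the relevant parameters ($\vec r$, and the relation defined by $\phi$) are correctly preserved — this is exactly the familiar reflection argument, but it is where $\mathsf{BCT}_1$ does its work, replacing the (unavailable) existence of a $V$-generic filter by an $\bar N$-generic one for a countable $\bar N$. A secondary subtlety is the coding used to move between $\Sigma_1$-over-$H_{\omega_1}$ statements and $\Sigma_0$-formulas with real parameters, which is the content of the cited \cite[Lemma 25.25]{JECH} and is used implicitly when we assert the witness may be taken in $2^\omega$; once that coding is fixed, the $\Sigma_0$-absoluteness between transitive models handles both directions uniformly.
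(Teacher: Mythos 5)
Your proof of the non-trivial implication $(3)\Rightarrow(1)$ is essentially the paper's own argument: a countable elementary submodel of a suitable $H_\theta$, its transitive collapse, an $\bar N$-generic filter supplied by $\mathsf{BCT}_1$, Cohen's forcing theorem, and $\Sigma_0$-absoluteness between the transitive sets $\bar N[G]\subseteq H_{\omega_1}$. The only (harmless and easily removed) quirk is your insistence that the existential witness be a name for an element of $2^\omega$ --- the quantifier $\exists x$ ranges over arbitrary elements of $H_{\omega_1}$, so you should either take $\sigma$ to be an arbitrary name supplied by fullness, or simply pick the witness inside $N[G]$ after forcing, exactly as the paper does.
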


Summing up we get: a $\Sigma^1_2$-statement holds in $V$
iff
the  corresponding $\Sigma_1$-statement over $H_{\omega_1}$ holds in some
model of the form $V^{\bool{B}}/G$. 

Combining the above Lemma with 
Proposition~\ref{proposition: tauf-ftau},
we can easily infer the proof
of Theorem~\ref{thm:cohabs}.


\begin
{proof}
We shall actually prove the following slightly
stronger formulation
of the non-trivial direction in the 
three equivalences above:
\begin{quote}
$H_{\omega_1}\models\exists x\phi(x,r)$ if 
$\Qp{\exists x\phi(x,r)}>0_{\bool{B}}$ for some complete boolean algebra $\bool{B}\in V$. 
\end{quote}
To simplify 
the exposition we prove this statement under the further assumption that 
that there exists an inaccessible cardinal $\kappa>\bool{B}$.
With greater care for details the large cardinal assumption can be removed.
So assume $\phi(x,\vec{y})$ is a $\Sigma_0$-formula and
$\Qp{\exists x\phi(x,\check{\vec{r}})}>0_{\bool{B}}$ for some complete boolean algebra $\bool{B}\in V$
with parameters $\vec{r}\in (2^\omega)^n$.
Pick a model $M\in V$ such that 
$M\prec (H_{\kappa})^V$, $M$ is countable in $V$,  and
$\bool{B},\vec{r}\in M$. 
Let $\pi_M:M\to N$ be its transitive collapse (i.e. $\pi_M(a)=\pi_M[a\cap M]$ for all $a\in M$)
and $\bool{Q}=\pi_M(\bool{B})$. Notice also that 
$\pi_M(\vec{r})=\vec{r}$: since $\omega\in M$ is a definable ordinal \emph{contained} in $M$, 
$\pi_M(\omega)=\pi_M[\omega]=\omega$;
consequently, $\pi_M$ fixes also all the elements in $2^\omega\cap M$.

Since $\pi_M$ is an isomorphism of $M$ with $N$,
\[
N\models\ZFC\wedge(b=\Qp{\exists x\phi(x,\check{\vec{r}})}>0_{\bool{Q}}).
\] 
Now let $G\in V$ be $N$-generic for $\bool{Q}$ with $b\in G$ ($G$ exists since $N$ is countable);
then by Cohen's theorem of forcing applied in $V$ to $N$, 
we have that $N[G]\models\exists x\phi(x,\vec{r})$.
So we can pick $a\in N[G]$ such that $N[G]\models\phi(a,\vec{r})$. 
Since $N,G\in (H_{\omega_1})^V$, we have that
$V$ models that $N[G]\in H_{\omega_1}^V$ and thus $V$ models that 
$a$ as well belongs to $H_{\omega_1}^V$.
Since $\phi(x,\vec{y})$ is a $\Sigma_0$-formula, 
$V$ models that $\phi(a,\vec{r})$ is absolute between the
transitive sets $N[G]\subset H_{\omega_1}$
to which $a,\vec{r}$ belong. In particular
$a$ witnesses in $V$ that $H_{\omega_1}^V\models\exists x\phi(x,\vec{r})$. 
\end{proof}

\section{Maximal forcing axioms} \label{sec:MMWGA}

Guided by all the previous results we want to formulate maximal forcing axioms.
We pursue two directions:
\begin{enumerate}
\item
A direction shaped by topological considerations:
we have seen that $\FA_{\aleph_0}(P)$ holds for any partial order $P$, and that
$\mathsf{AC}$ is equivalent to the satisfaction of $\FA_{\lambda}(P)$ for all regular $\lambda$ and all
$<\lambda$-closed posets $P$.

We want to isolate the largest possible class of partial orders $\Gamma_\lambda$ for which 
$\FA_{\lambda}(P)$ holds for all $P\in \Gamma_\lambda$. The case $\lambda=\aleph_0$ is 
handled by Baire's category theorem, which shows that $\Gamma_{\aleph_0}$ is the class of all posets.
We will outline how the case $\lambda=\aleph_1$ is settled by the work of Foreman, Magidor, and Shelah \cite{foreman_magidor_shelah} and leads to
Martin's maximum. On the other hand, the case $\lambda>\aleph_1$ is wide open and until recently only 
partial results have been obtained. New techniques to handle the case 
$\lambda=\aleph_2$ are being developed 
(notably by Neeman, and also by Asper\`o, Cox, Krueger,
Mota, Velickovic, see among others ~\cite{krueger:forcingclubaleph2,krueger:mota:stronglyproper,neeman:forcingaleph2}),
however the full import of their possible applications is not clear yet.
\item
A direction shaped by model-theoretic considerations: Baire's category theorem implies
that the natural embedding of $2^\omega$ into $C(\St(\bool{B}),2^\omega)/G$ is $\Sigma_2$-elementary, whenever
$2^\omega$ is endowed with $\bool{B}$-baire predicates (among which are all the Borel predicates).
We want to reinforce this theorem in two directions: 
\begin{itemize}
\item[(A)]
We want to be able to infer that (at least for Borel predicates) the natural embedding of 
$2^\omega$ into $C(\St(\bool{B}),2^\omega)/G$
yields a full elementary embedding of $2^\omega$ into $C(\St(\bool{B}),2^\omega)/G$.
\item[(B)]
We want to be able to define boolean ultrapowers $M^{\bool{B}}$ also for 
other first order structures $M$ than $2^\omega$ and be able to
infer that the natural embedding of $M$ into $M^{\bool{B}}/G$ is elementary for these boolean ultrapowers.
\end{itemize}
\end{enumerate}
Both directions (the topological and the model-theoretic)
converge towards the isolation of certain natural forcing axioms.   
Moreover for each cardinal $\lambda$, the relevant stuctures for which we can define a natural notion of
boolean ultrapower are either the structure
$H_{\lambda^+}$, or the Chang model $L(\Ord^{\lambda})$. 

We believe that we have now a satisfactory 
understanding  of the maximal forcing axioms one can get following both directions for the cases
$\lambda=\aleph_0,\aleph_1$. The main open question remaining how to isolate (if at all possible) the maximal
forcing axioms for $\lambda>\aleph_1$. 

\subsection{Woodin's generic absoluteness for $H_{\omega_1}$ and $L(\Ord^\omega)$}


We start with the model-theoretic direction, following Woodin's work in $\Omega$-logic.
Observe that a set theorist works either with 
first order calculus to justify some proofs over $\ZFC$, or with forcing
to obtain independence results over $\ZFC$.
However, in axiom systems extending $\ZFC$ there seems to be a gap between what we can achieve by ordinary proofs and the independence 
results that we can obtain by means of forcing. 
To close this gap
we would like two desirable features of a ``complete'' first order theory $T$
that contains $\ZFC$, specifically with respect to the semantics given by the class of boolean 
valued models of $T$:
\begin{itemize}
\item
$T$ is complete with respect to its intended semantics, i.e for all statements $\phi$
only one among $T+\phi$ and $T+\neg\phi$ is forceable.
\item
Forceability over $T$ should correspond to a  notion of derivability with respect to some proof system, 
for instance derivability with respect to a standard first order calculus for $T$.
\end{itemize}

Both statements appear to be rather bold and have to be handled with care:
Consider for example the statement 
$|\omega|=|\omega_1|$ in a theory
$T$ extending $\ZFC$ with the statements \emph{$\omega$ is the first infinite cardinal} and 
\emph{$\omega_1$ is the first uncountable cardinal}.
Then clearly
$T$ proves $|\omega|\neq|\omega_1|$, while if one forces with $\Coll(\omega,\omega_1)$
one produces a model of set theory where this equality holds (however the formula
\emph{$\omega_1$ is the first uncountable cardinal} is now false in this model).

At first glance, this suggests that as we expand the language for $T$, forcing starts to act randomly
on the formulae of $T$, switching the truth value of its formulae with parameters in ways which
it does not seem simple to describe. 
However the above difficulties arise essentially from our lack of attention to define 
the type of formulae for which we aim to have the completeness of $T$ with respect to 
forceability. 
We can show that when the formulae are limited to talking only about a suitable initial segment
of the set theoretic universe (i.e. $H_{\omega_1}$ or $L(\Ord^\omega)$), 
and we consider only forcings that preserve the intended
meaning of the parameters by which we enriched the language of $T$ (i.e. parameters in $H_{\omega_1}$), 
this random
behaviour of forcing does not show up anymore.

We take a platonist's stance towards set theory; 
thus we have one canonical model $V$ of $\ZFC$, the truths of which we try to
uncover.
To do this, we may use model theoretic techniques that produce new models of
the part of $\mathsf{Th}(V)$ about which we are confident. This certainly includes $\ZFC$, and (for most platonists)
all the large cardinal axioms.

We may start our quest to uncover the truth in $V$ by first settling the theory of
$H_{\omega_1}^V$ (the hereditarily countable sets), then the theory of $H_{\omega_2}^V$
(the sets of hereditarily cardinality $\aleph_1$) and so on and so forth, thus covering step by step
all infinite cardinals.
To proceed we need some definitions:

 \begin{definition}
Given a theory $T\supseteq \ZFC$ and a family 
$\Gamma$ of partial orders definable in $T$, we say that
$\phi$ is $\Gamma$-consistent for $T$ if 
$T$ proves that there exists a complete boolean algebra $\bool{B}\in\Gamma$ 
such that $\Qp{\phi}_{\bool{B}}>0_{\bool{B}}$.

Given a model $V$ of $\ZFC$ we say that $V$ models that $\phi$ is
$\Gamma$-consistent if $\phi$ is $\Gamma$-consistent for $\mathsf{Th}(V)$.

\end{definition}


\begin{definition}
Let 
\[
T\supseteq\ZFC+\{\lambda \text{ is an infinite cardinal}\}
\]
$\Omega_\lambda$ is the definable (in $T$) class of partial orders $P$ which satisfy
$\FA_\lambda(P)$.
\end{definition}
In particular Baire's category theorem amounts to saying that
$\Omega_{\aleph_0}$ is the class of all partial orders (denoted by Woodin
as the class $\Omega$).
The following is a careful reformulation of Lemma~\ref{lem:CohAbs} which 
does not require any ontological commitments about $V$.
\begin{lemma}[Cohen's Absoluteness Lemma]
Assume $T\supseteq\ZFC+\{p\subseteq\omega\}$ and
$\phi(x,p)$ is a $\Sigma_0$-formula.
Then the following are equivalent:
\begin{itemize}
\item
$T\vdash \exists x\phi(x,p)$,
\item
$\exists x\phi(x,p)$ is $\Omega$-consistent for $T$.
\end{itemize}
\end{lemma}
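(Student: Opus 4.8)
The plan is to recast the two implications as soundness and completeness with respect to the ``$\Omega$-semantics'' given by boolean valued models. One direction is the easy soundness half: if $T\vdash\exists x\,\phi(x,p)$, then by the Soundness Theorem (Theorem~\ref{soundness}) every $\bool{B}$-valued model of $T$ assigns boolean value $1_{\bool{B}}$ to $\exists x\,\phi(x,p)$; in particular $\Qp{\exists x\,\phi(x,p)}_{\bool{B}}=1_{\bool{B}}>0_{\bool{B}}$ in $V^{\bool{B}}$ for every complete boolean algebra $\bool{B}$, so $T$ proves that $\exists x\,\phi(x,p)$ is $\Omega$-consistent (recall $\Omega=\Omega_{\aleph_0}$ is the class of \emph{all} complete boolean algebras by $\mathsf{BCT}_1$, so the restriction to $\Gamma=\Omega$ is vacuous). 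The work is entirely in the converse: from the hypothesis that $T\vdash$ ``$\exists x\,\phi(x,p)$ is $\Omega$-consistent'' we must syntactically derive $\exists x\,\phi(x,p)$ from $T$.

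For the converse I would argue by contraposition inside a single model and then invoke completeness of first order logic to pull the conclusion back to the level of provability from $T$. So suppose $T\nvdash\exists x\,\phi(x,p)$; then there is a model $V\models T+\forall x\,\neg\phi(x,p)$ (with its parameter $p\subseteq\omega$). Working in this $V$, I want to show $\exists x\,\phi(x,p)$ is \emph{not} $\Omega$-consistent for $\mathsf{Th}(V)$, i.e.\ that for every complete boolean algebra $\bool{B}\in V$ one has $\Qp{\exists x\,\phi(x,p)}_{\bool{B}}=0_{\bool{B}}$. This is exactly the content already extracted in the proof of Theorem~\ref{thm:cohabs} above, read contrapositively: that argument shows $\Qp{\exists x\,\phi(x,r)}_{\bool{B}}>0_{\bool{B}}$ implies $H_{\omega_1}^V\models\exists x\,\phi(x,r)$ — pick a countable $M\prec H_\kappa^V$ with $\bool{B},p\in M$, collapse to a transitive $N$ (noting $\pi_M$ fixes $\omega$ hence fixes $p$ and the whole of $2^\omega\cap M$), take an $N$-generic $G\in V$ with the relevant nonzero condition in it (such $G$ exists by $\mathsf{BCT}_1$ since $N$ is countable), apply Cohen's forcing theorem II to get $N[G]\models\exists x\,\phi(x,p)$, pick a witness $a\in N[G]\subseteq H_{\omega_1}^V$, and use $\Sigma_0$-absoluteness between the transitive sets $N[G]\subseteq H_{\omega_1}^V$ to conclude $a$ witnesses $H_{\omega_1}^V\models\exists x\,\phi(x,p)$. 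But any such witness $a$ is itself an element of $V$, and $\phi$ is $\Sigma_0$ so $\phi(a,p)$ is absolute between $H_{\omega_1}^V$ and $V$; hence $V\models\exists x\,\phi(x,p)$, contradicting our choice of $V$. Therefore in $V$ every complete boolean algebra forces $\neg\exists x\,\phi(x,p)$, so $\exists x\,\phi(x,p)$ fails to be $\Omega$-consistent in $V$. Since $V\models T$ was arbitrary, $T\nvdash$ ``$\exists x\,\phi(x,p)$ is $\Omega$-consistent'', which is the desired contrapositive.

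I would also remark on the one point of care — the hidden large cardinal assumption. The reduction above uses a countable elementary submodel of some $H_\kappa^V$ with $\kappa$ large enough that $H_\kappa^V\models\ZFC$, and the cleanest route takes $\kappa$ inaccessible above $\bool{B}$, exactly as in the proof of Theorem~\ref{thm:cohabs}; but as noted there, ``with the obvious care in details the large cardinal assumption can be removed'' — one replaces $H_\kappa^V$ by a sufficiently rich finite fragment of $\ZFC$ and applies the reflection theorem, so the argument goes through over plain $\ZFC$.

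\textbf{Main obstacle.} The genuinely delicate step is the nonzero-to-witness extraction, i.e.\ reproducing the argument of Theorem~\ref{thm:cohabs} while being scrupulous that everything is carried out \emph{inside the arbitrary model $V\models T$} with no platonist commitment — in particular that the generic $G$ for the countable collapsed model $N$ genuinely lives in $V$ (this is where $\mathsf{BCT}_1$, available in $V$, is essential) and that the parameter $p$ is correctly identified across the transitive collapse. Everything else is bookkeeping: Soundness for one direction, completeness of first order logic to convert the model-theoretic contradiction into a statement about $\vdash$.
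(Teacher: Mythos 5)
Your overall architecture is correct and matches the paper's: the lemma is presented there as a reformulation of Lemma~\ref{lem:CohAbs}, whose nontrivial direction is proved by exactly the argument you reproduce (countable $M\prec H_\kappa$, transitive collapse, an $N$-generic filter in $V$ supplied by $\mathsf{BCT}_1$, Cohen's forcing theorem, $\Sigma_0$-absoluteness between transitive sets), and your use of G\"odel completeness to convert ``true in every model of $T$'' into ``$T\vdash$'' is the intended reading of the syntactic formulation.

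One step as written does not go through, though it is easily repaired. For the easy direction you invoke the Soundness Theorem to conclude that $\Qp{\exists x\,\phi(x,p)}_{\bool{B}}=1_{\bool{B}}$ in $V^{\bool{B}}$ because ``every $\bool{B}$-valued model of $T$'' assigns value $1$ to theorems of $T$. But $T$ is an \emph{arbitrary} extension of $\ZFC+\{p\subseteq\omega\}$, and $V^{\bool{B}}$ need not assign value $1_{\bool{B}}$ to the extra axioms of $T$ (take $T\supseteq\ZFC+V=L$ and $\bool{B}$ nontrivial): Cohen's forcing theorem only guarantees value $1$ for the $\ZFC$ axioms and, via check-names, for $\Sigma_0$ facts about parameters in $V$. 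So the Soundness Theorem is not applicable to $V^{\bool{B}}$ as a model of $T$. The correct one-line argument is upward $\Sigma_1$-absoluteness from $V$ to $V^{\bool{B}}$: working in any $V\models T$, a witness $a$ to $\exists x\,\phi(x,p)$ yields $\Qp{\phi(\check a,\check p)}_{\bool{B}}=1_{\bool{B}}$ since $\phi$ is $\Sigma_0$, hence $\Qp{\exists x\,\phi(x,\check p)}_{\bool{B}}=1_{\bool{B}}>0_{\bool{B}}$ for every $\bool{B}$, which is what $\Omega$-consistency asks for. With that substitution your proof is complete.
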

This shows that 
for $\Sigma_1$-formulae with real parameters
the desired overlap between the ordinary 
notion of provability and the semantic notion of forceability is provable in $\ZFC$.
Now it is natural to asking if we can expand the above in at least two directions:
\begin{enumerate}
\item
Increase the complexity of the formula,
\item
Expand the language allowing parameters also for other infinite cardinals.
\end{enumerate}
The second direction will be pursued in the next subsection.
Concerning the first direction, 
the extent by which we can increase the complexity of the formula requires once again
some attention to the semantical interpretation of its parameters and its quantifiers.
We have already observed that the formula $|\omega|=|\omega_1|$
is inconsistent but $\Omega$-consistent
in a language with parameters for $\omega$ and $\omega_1$.
One of 
Woodin's main achievements\footnote{We follow Larson's presentation as in~\cite{LARSON}.} 
in $\Omega$-logic shows that if we restrict the semantic interpretation of
$\phi$ to range over the structure $L([\Ord]^{\aleph_0})$ and 
we assume large cardinal axioms, we can get a full correctness and completeness 
result\footnote{The large cardinal assumptions on $T$ of the present formulation can be significantly reduced. 
See~\cite[Corollary 3.1.7]{LARSON}.}~\cite[Corollary 3.1.7]{LARSON}:

\begin{theorem}[Woodin]
Assume $T$ is a theory extending
\[
\ZFC+\{p\subset\omega\}+\text{there are class many supercompact cardinals},
\] 
$\phi(x,y)$ is any formula in free variables $x,y$, and $A\subseteq (2^{\omega})^n$ is universally Baire.
Then the following are equivalent (where $\dot{A}^{\bool{B}}$ is the $\bool{B}$-name given by the lifting of $A$ to $V^{\bool{B}}$ given by Theorem~\ref{thm:dual2omegaBnames} ):
\begin{itemize}
\item
$T\vdash [L([\Ord]^{\aleph_0},A)\models\phi(p,A)]$,
\item
$T\vdash \exists\bool{B}\Qp{L([\Ord]^{\aleph_0},\dot{A}^{\bool{B}})\models\phi(p,\dot{A}^{\bool{B}})}>0_{\bool{B}}$,
\item
$T\vdash \forall\bool{B}\Qp{L([\Ord]^{\aleph_0},\dot{A}^{\bool{B}})\models\phi(p,\dot{A}^{\bool{B}})}=1_{\bool{B}}$.
\end{itemize}
\end{theorem}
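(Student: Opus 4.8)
The plan is to isolate the trivial implication, reduce the remaining two equivalences to a single generic absoluteness scheme, and then prove that scheme by Woodin's stationary tower technique. First, $(3)\Rightarrow(2)$ is immediate: take $\bool{B}$ to be the two-element algebra, for which $\dot{A}^{\bool{B}}=A$ and every boolean value is $0_{\bool{B}}$ or $1_{\bool{B}}$. Everything else rests on the following scheme $(\star)$, which I would aim to prove from $\ZFC$ together with ``$p\subseteq\omega$'', ``$A$ is universally Baire'', and ``there are class many supercompact cardinals'': \emph{for every complete boolean algebra $\bool{B}$, the boolean value $\Qp{L([\Ord]^{\aleph_0},\dot{A}^{\bool{B}})\models\phi(p,\dot{A}^{\bool{B}})}_{\bool{B}}$ equals $1_{\bool{B}}$ if $L([\Ord]^{\aleph_0},A)\models\phi(p,A)$ and equals $0_{\bool{B}}$ otherwise}. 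Granting $(\star)$, the two non-trivial implications become pure bookkeeping inside an arbitrary $V\models T$ (which then satisfies $(\star)$): if $V\models\exists\bool{B}\,\Qp{\cdots}_{\bool{B}}>0_{\bool{B}}$ then $(\star)$ forces $V\models L([\Ord]^{\aleph_0},A)\models\phi(p,A)$, and since $V$ was arbitrary $T\vdash(1)$, i.e. $(2)\Rightarrow(1)$; and if $V\models L([\Ord]^{\aleph_0},A)\models\phi(p,A)$ then $(\star)$ yields $V\models\Qp{\cdots}_{\bool{B}}=1_{\bool{B}}$ for every $\bool{B}$, so $T\vdash(3)$, i.e. $(1)\Rightarrow(3)$.

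To prove $(\star)$, note that $L([\Ord]^{\aleph_0},A)$ is determined by the ordinals together with $\R$ and the predicate $A$, so the content is: for a $\bool{B}$-generic $G$, the structures $(L([\Ord]^{\aleph_0},A))^V$ and $(L([\Ord]^{\aleph_0},(\dot{A}^{\bool{B}})_G))^{V[G]}$ satisfy the same sentences with parameter $p$ --- the $0/1$ dichotomy then follows because the $V$-side structure does not depend on $G$ (to speak honestly of generics one passes to a countable elementary submodel, or argues directly inside the boolean-valued model $V^{\bool{B}}$). This is Woodin's generic absoluteness for $L([\Ord]^{\aleph_0},A)$, and I would derive it as follows. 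Fix a Woodin cardinal $\delta>|\bool{B}|$ (class many supercompacts furnish, above any set, a Woodin cardinal). Forcing over $V$ with the full stationary tower $\mathbb{Q}_{<\delta}$ yields a generic elementary embedding $j\colon V\to M\subseteq V[H]$ with $M^{<\delta}\subseteq M$ in $V[H]$, whence $\R^M=\R^{V[H]}$; since $A$ is universally Baire its complementing tree representations lie in $V\subseteq M$ and witness that $j(A)$ is the canonical reinterpretation $A_H$ of $A$ in $V[H]$ provided by Theorem~\ref{thm:dual2omegaBnames}, while $j(p)=p$ since $p$ is a set of natural numbers and $\crit(j)=\omega_1$. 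Elementarity of $j$ gives $(L([\Ord]^{\aleph_0},A))^V\equiv(L([\Ord]^{\aleph_0},A_H))^{V[H]}$ with parameter $p$. Running the same construction over $V[G]$ produces $(L([\Ord]^{\aleph_0},(\dot{A}^{\bool{B}})_G))^{V[G]}\equiv(L([\Ord]^{\aleph_0},A_{H'}))^{V[G][H']}$; finally, absorbing $H$ on one side and $G$ together with $H'$ on the other into sufficiently generic collapses, one arranges $\R^{V[H]}=\R^{V[G][H']}$, so that the canonical reinterpretation of $A$ (which depends only on these reals and on the fixed $V$-side tree) is the same set on both sides and the two right-hand structures literally coincide; chaining the equivalences closes the argument.

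The main obstacle is everything concerning the stationary tower: that at a Woodin cardinal $\mathbb{Q}_{<\delta}$ is a well-behaved forcing producing generic elementary embeddings with the advertised closure, and that universally Baire sets are carried by such embeddings to their canonical reinterpretations. These are the substantial theorems behind $\Omega$-logic, due to Woodin; for the full detail I would follow Larson's monograph~\cite{LARSON}. An alternative route bypasses the tower through inner model theory --- class many Woodin cardinals (indeed, level by level, finitely many Woodins with a measurable above) yield fully iterable premice $M_n^{\#}(x)$ for every real $x$, and in the limit this pins down the theory of $L([\Ord]^{\aleph_0},A)$ across set forcing --- or through the derived model theorem; either way the decisive input is the same body of results. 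A minor point is the collapse bookkeeping needed to align the reals of the two tower extensions, but this is routine once the tower machinery is available. Finally, as the paper's footnote signals, the large cardinal hypothesis is far from optimal: a proper class of Woodin cardinals that are limits of Woodin cardinals already suffices for $(\star)$ at the level of $L([\Ord]^{\aleph_0},A)$.
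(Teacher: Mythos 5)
The paper does not prove this theorem: it is stated as Woodin's result and explicitly deferred to Larson's monograph (\cite[Corollary 3.1.7]{LARSON}), so there is no internal argument to compare yours against line by line. Your proposal is essentially the standard route that the cited source follows: the elementary part --- isolating $(3)\Rightarrow(2)$ via the two-element algebra and reducing the remaining implications to the single scheme $(\star)$ by soundness/completeness over arbitrary models of $T$ --- is correct and self-contained, and the substantial part (the stationary tower $\mathbb{Q}_{<\delta}$ at a Woodin cardinal, countable closure of the target model so that $([\Ord]^{\aleph_0})^M=([\Ord]^{\aleph_0})^{V[H]}$, the tree representations carrying the universally Baire $A$ to its canonical reinterpretation, and the alignment of the reals of the two tower extensions) is exactly the machinery behind Woodin's theorem, appropriately attributed rather than reproved. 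The one place where your sketch glosses over real work is the final ``absorbing into sufficiently generic collapses'' step aligning $\R^{V[H]}$ with $\R^{V[G][H']}$ and checking that the two reinterpretations of $A$ (via Theorem~\ref{thm:dual2omegaBnames}) then coincide; this is where Larson's argument spends its effort, but you have correctly identified it as routine-given-the-tower rather than as a new idea.
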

Notice that since $H_{\omega_1}\subseteq L([\Ord]^{\aleph_0})$,
via Theorem~\ref{thm:dual2omegaBnames} and natural generalizations of \cite[Lemma 25.25]{JECH}
establishing a correspondence between $\Sigma^1_{n+1}$-properties and $\Sigma_n$-properties 
over $H_{\omega_1}$, 
we obtain that for any complete boolean algebra
$\bool{B}$ and any $\Sigma^1_{n}$-predicate $R\subseteq (2^\omega)^n$ the map 
$x\mapsto[c_x]_G$ of $(2^\omega,R)$ into
$(C(\St(\bool{B},2^\omega),R^{\St(\bool{B})})$ is an elementary embedding. 
In particular the above theorem provides a first 
fully satisfactory answer to the question of whether the natural embeddings of $2^\omega$ into its boolean 
ultrapowers can be elementary: the answer is yes if we accept the existence of large cardinals!

The natural question to address now is whether we can step up this result also for uncountable 
$\lambda$.
If so, to which form?

\subsection{Topological maximality: Martin's maximum $\MM$}
Let us now address the quest for maximal forcing axioms from the topological direction. Specifically:
what is the largest class of partial orders $\Gamma$ for which we can posit
$\FA_{\aleph_1}(\Gamma)$?

Shelah proved
that $\FA_{\aleph_1}(P)$ fails for any $P$ which does not 
preserve stationary subsets of $\omega_1$. Nonetheless it cannot be decided in $\ZFC$ 
whether this is a necessary condition for a poset $P$ in order to have the failure of
$\FA_{\aleph_1}(P)$.
For example let $P$ be a forcing which shoots a club of ordertype $\omega_1$ through a
projectively stationary and costationary subset of $P_{\omega_1}(\omega_2)$
by selecting countable initial segments of this club: 
for all such $P$, it is provable in $\ZFC$ that $P$ 
preserve stationary subsets 
of $\omega_1$. However 
in $L$, $\FA_{\aleph_1}(P)$ fails for some such $P$ while in a
model of Martin's maximum $\MM$, $\FA_{\aleph_1}(P)$ holds for all such $P$.

The remarkable result of Foreman, Magidor, and Shelah~\cite{foreman_magidor_shelah} 
is that the above necessary condition
is consistently also a sufficient condition: it can be forced that 
$\FA_{\aleph_1}(P)$ holds if and only if $P$ is a forcing notion preserving all stationary subsets of $\omega_1$.
This axiom is known in the literature as Martin's maximum $\MM$.
In view of Theorem~\ref{thm:forcax-AC-BCT},
$\MM$ realizes a maximality property for forcing axioms: it can be seen as a maximal 
strengthening of the axiom of choice $\mathsf{AC}\rest{\omega_2}$ for $\aleph_1$-sized families of non-empty sets.
Can we strengthen this further? If so, in which form? It turns out that stronger and stronger forms of forcing axioms 
can be expressed in the language of categories and provide means to extend Woodin's generic absoluteness results
to third order arithmetic or more generally to larger and larger fragments of the set theoretic universe.

\subsection{Category forcings and category forcing axioms}

Assume $\Gamma$ is a class of complete boolean algebras and
$\rightarrow^\Theta$ is a family of complete homomorphisms between
elements of $\Gamma$
closed under composition and containing all identity maps. 
$(\Gamma,\rightarrow^\Theta)$ is the category
whose objects
are the complete boolean algebras in $\Gamma$ 
and whose arrows are given by complete homomorphisms
$i:\bool{B}\to\bool{Q}$ in $\rightarrow^\Theta$.
We call embeddings in $\rightarrow^\Theta$, $\Theta$-correct embeddings.
Notice that these categories immediately give rise to natural class pre-orders
associated with them, 
pre-orders whose elements are the complete boolean algebras in $\Gamma$
and whose order relation is given by the arrows in $\rightarrow^\Theta$ (i.e. $\bool{B}\leq_\Theta\bool{C}$ if
there exists $i:\bool{C}\to\bool{B}$ in $\rightarrow^\Theta$).
We denote these class partial orders by $(\Gamma,\leq_\Theta)$.

Depending on the choice of $\Gamma$ and $\rightarrow^\Theta$ these partial orders can be trivial
(as forcing notions),
for example:
\begin{remark}\label{rem:trivOmega}
Assume $\Omega=\Omega_{\aleph_0}$ is the class of all complete boolean algebras and 
$\rightarrow^\Omega$ is the class of all complete embeddings, then any
two conditions in $(\Gamma,\leq_\Omega)$ are compatible, i.e.
$(\Gamma,\leq_\Omega)$ is forcing equivalent to the trivial partial order.
This is the case since for any pair of partial orders $P,Q$ and $X$ of size larger than
$2^{|P|+|Q|}$ there are 
complete injective homomorphisms of $\RO(P)$ and $\RO(Q)$ into the boolean completion of
$\Coll(\omega,X)$ (see~\cite[Thm A.0.7]{LARSON} and its following remark). 
These embeddings witness the compatibility of $\RO(P)$ with $\RO(Q)$.

On the other hand these class partial orders will in general be non-trivial:  let $\SSP$ be the class of stationary set preserving forcings. Then the Namba forcing shooting
a cofinal $\omega$-sequence on $\omega_2$ and $\Coll(\omega_1,\omega_2)$ are incompatible conditions
in $(\SSP,\leq_\Omega)$: any forcing notion absorbing both of them makes the cofinality of
$\omega_2^V$ at the same time of cofinality $\omega_1^V$ (using the generic filter for $\Coll(\omega_1,\omega_2)$)
and countable (using the generic filter for Namba forcing); this means that this forcing must collapse $\omega_1^V$ to become a countable ordinal, hence cannot be stationary set preserving.
\end{remark}

\subsubsection*{Forcing axioms as density properties of category forcings}
The following results are among the main reasons to analyze in more
detail these types of class forcings:

\begin{theorem}[Woodin, Thm. 2.53~\cite{woodinBOOK}]\label{cor:MM++den}
Assume there are class many supercompact cardinals. Then the following are equivalent for any complete
cba $\bool{B}$ and cardinal $\kappa$:
\begin{enumerate}
\item
$\FA_\kappa(\bool{B})$;
\item
there is a complete homomorphism of $\bool{B}$ into a presaturated tower inducing a generic 
ultrapower embedding with critical point $\kappa^+$.
\end{enumerate}
\end{theorem}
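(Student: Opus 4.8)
The statement is an instance of the general dictionary, developed by Foreman and by Woodin, that translates forcing axioms into the existence of generic elementary embeddings with a prescribed critical point, obtained by \emph{absorbing} the forcing into a presaturated tower of ideals. I would prove the two implications separately. The hypothesis of class many supercompacts enters in two places: for the direction $(1)\Rightarrow(2)$ it guarantees, for a supercompact $\delta>|\bool{B}|$, the existence of the canonical presaturated tower of height $\delta$ ``based above $\kappa^+$'', whose generic ultrapower $j\colon V\to M\cong\Ult(V,\dot G)$ satisfies $\crit(j)=\kappa^+$ together with $M^{<\delta}\cap V[\dot G]\subseteq M$ (this being the usual stationary-tower analysis); and it ensures that the associated category forcing densely collapses everything, so that one may harmlessly reduce to the case $|\bool{B}|<\delta$ throughout.

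For $(2)\Rightarrow(1)$, suppose $i\colon\bool{B}\to T$ is a complete embedding into a presaturated tower $T$ whose generic ultrapower $j\colon V\to M$ has $\crit(j)=\kappa^+$. Given a family $\langle D_\alpha:\alpha<\kappa\rangle$ of predense subsets of $\bool{B}$ (which, after the reduction above, we may assume lie in $H_{\kappa^+}^V\subseteq M$ and are hence fixed by $j$), force with $T$: then $H:=i^{-1}[\dot G]$ is a $V$-generic filter on $\bool{B}$, so it meets every $D_\alpha$, and a choice $d_\alpha\in H\cap D_\alpha$ yields a $\kappa$-sequence that, by the closure of $M$, already lies in $M$. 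Using normality of the tower one argues that $M$ contains a filter on $\bool{B}$ meeting all the $D_\alpha$, and then elementarity of $j$ transfers the existence of such a filter back to $V$, giving $\FA_\kappa(\bool{B})$. This reflection step is, to my mind, the main obstacle: one has to be sure that the filter extracted inside $M$ meets the \emph{original} predense sets rather than merely their $j$-images, and it is precisely here that the exact value $\kappa^+$ of the critical point — rather than just ``some inaccessible closure point'' — is exploited.

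For $(1)\Rightarrow(2)$, assume $\FA_\kappa(\bool{B})$; one must exhibit a complete homomorphism of $\bool{B}$ into the canonical presaturated tower $T$ at $\kappa^+$ fixed above. The plan is to use $\FA_\kappa(\bool{B})$ as a ``catching one's tail'' device: along a $\subseteq$-increasing chain of size-$\kappa$ elementary submodels one specifies countably-many-at-a-time the decisions a condition must make about the $T$-generic and about sufficiently many predense sets of both $\bool{B}$ and $T$, then invokes $\FA_\kappa(\bool{B})$ to produce a single filter meeting all of them; this filter is exactly the data needed to define $i\colon\bool{B}\to T$. The delicate point in this direction is verifying that $i$ is \emph{complete} — so that it does induce a generic ultrapower, with critical point $\kappa^+$ on the nose — for which one feeds the universality of the presaturated tower back in, effectively applying the implication $(2)\Rightarrow(1)$ inside an auxiliary generic extension.
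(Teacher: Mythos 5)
First, note that the paper does not prove this theorem at all: it is quoted as a black box from Woodin's book, and the text explicitly declines to discuss presaturated towers. So your sketch can only be judged on its own merits, and as it stands both directions have genuine gaps.

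In $(2)\Rightarrow(1)$ your reduction to ``the $D_\alpha$ lie in $H_{\kappa^+}^V$ and are hence fixed by $j$'' is unjustified and in general false: each $D_\alpha$ is a subset of $\bool{B}$, and $\bool{B}$ may be arbitrarily large, so nothing places $D_\alpha$ below the critical point. The standard argument does not need this. One picks $d_\alpha\in H\cap D_\alpha$, observes that $\langle j(d_\alpha):\alpha<\kappa\rangle$ belongs to $M$ by the closure of the generic ultrapower of a presaturated tower, checks that the upward closure of the finite meets of the $j(d_\alpha)$ is a filter on $j(\bool{B})$ meeting each entry of $j(\langle D_\alpha:\alpha<\kappa\rangle)$ (here $\crit(j)=\kappa^+>\kappa$ is used so that this sequence still has length $\kappa$ and its $\alpha$-th entry is $j(D_\alpha)$), and then applies elementarity to the statement ``there is a filter meeting every member of the given $\kappa$-family''. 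Your stated ``main obstacle'' is therefore exactly backwards: the filter built in $M$ must meet the \emph{$j$-images} $j(D_\alpha)$, not the original sets; it is precisely because one only needs the $j$-images that elementarity can pull the statement back to $V$.

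In $(1)\Rightarrow(2)$ the ``catching one's tail'' chain of submodels is not the right mechanism and, as described, does not produce a complete homomorphism. The key idea you are missing is the Foreman--Magidor--Shelah reformulation: $\FA_\kappa(\bool{B})$ is equivalent to the stationarity, among $X\prec H_\theta$ of size $\kappa$ with $X\cap\kappa^+\in\kappa^+$, of the set $S$ of those $X$ admitting a filter on $\bool{B}$ meeting every predense set in $X$. This stationary set $S$ is itself a \emph{condition} in the stationary tower of height a supercompact $\delta>|\bool{B}|$ concentrating on such $X$; below $S$ the tower generic canonically induces a $V$-generic filter on $\bool{B}$, which is exactly the assertion that $\bool{B}$ embeds completely into the tower restricted to $S$, and the supercompactness of $\delta$ is what yields presaturation and the computation $\crit(j)=\kappa^+$. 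Without identifying $S$ and using it as a condition, there is no route from the forcing axiom to the complete embedding.
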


\begin{theorem}[V. Thm. 2.12~\cite{VIAMMREV}]\label{cor:MM++den2}
Assume there are class many supercompact cardinals. Then the following are equivalent:
\begin{enumerate}
\item
$\MM^{++}$;
\item
the class of presaturated normal towers is dense in $(\SSP,\leq_{\SSP})$.
\end{enumerate}
\end{theorem}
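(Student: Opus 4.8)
The plan is to establish the equivalence of $\MM^{++}$ with the density of presaturated normal towers in the category forcing $(\SSP,\leq_{\SSP})$ by combining the characterization of forcing axioms via presaturated towers (Theorem~\ref{cor:MM++den}) with a careful analysis of the category forcing ordering $\leq_{\SSP}$. First I would unfold the definition of $\MM^{++}$: it asserts that for every stationary set preserving forcing $\bool{B}$, every family of $\aleph_1$-many dense sets, and every family of $\aleph_1$-many $\bool{B}$-names for stationary subsets of $\omega_1$, there is a filter meeting all the dense sets and correctly evaluating the names as stationary. The key point is that $\MM^{++}$ is equivalent (using the standard reflection arguments of Foreman--Magidor--Shelah, or Theorem~\ref{cor:MM++den} with $\kappa=\aleph_1$) to the statement that every $\bool{B}\in\SSP$ admits a $\SSP$-correct embedding into a presaturated normal tower of height some inaccessible (or Woodin) $\delta$ whose induced generic ultrapower has critical point $\omega_2$.

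For the direction $(1)\Rightarrow(2)$: assume $\MM^{++}$ and let $\bool{B}\in\SSP$ be arbitrary. By Theorem~\ref{cor:MM++den} applied with $\kappa=\aleph_1$ (and absorbing the $^{++}$ strengthening into the requirement that the tower be \emph{normal} and the embedding capture names for stationary sets), there is a complete $\SSP$-correct homomorphism $i:\bool{B}\to\TT$ where $\TT$ is a presaturated normal tower. Since such a $\TT$ is itself a stationary set preserving forcing (presaturated towers preserve stationary subsets of $\omega_1$, as the generic ultrapower is well-founded and has critical point $\omega_2$), $\TT\in\SSP$ and $\TT\leq_{\SSP}\bool{B}$ witnesses density below $\bool{B}$. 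As $\bool{B}$ was arbitrary, the presaturated normal towers are dense in $(\SSP,\leq_{\SSP})$.

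For the converse $(2)\Rightarrow(1)$: assume the presaturated normal towers are dense in $(\SSP,\leq_{\SSP})$, and fix $\bool{B}\in\SSP$ together with a family $\bp{D_\alpha:\alpha<\omega_1}$ of predense subsets of $\bool{B}$ and a family $\bp{\dot{S}_\alpha:\alpha<\omega_1}$ of $\bool{B}$-names forced to be stationary subsets of $\omega_1$. By density pick a presaturated normal tower $\TT\leq_{\SSP}\bool{B}$, so there is a $\SSP$-correct complete embedding $i:\bool{B}\to\TT$. Force with $\TT$ to obtain a generic ultrapower $j:V\to M\subseteq V[H]$ with $\crit(j)=\omega_2$ and $M$ well-founded; presaturation guarantees $M^{<\delta}\cap V[H]\subseteq M$ for the relevant purposes. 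In $V[H]$ the embedding $i$ yields a $\bool{B}$-generic filter $G$ (the preimage of $H$), which meets all the $D_\alpha$; and since $j$ fixes $\omega_1$, $\SSP$-correctness of $i$ together with normality of $\TT$ ensures that each $(\dot{S}_\alpha)_G$ remains stationary in $V[H]$, hence in $M$. Running the usual reflection argument — $M$ sees a filter meeting countably-from-its-perspective many dense sets and preserving the $\dot S_\alpha$, and $\omega_1^M=\omega_1$ while the relevant parameters are below $\crit(j)$ — pulls this configuration back to $V$, yielding in $V$ a filter on $\bool{B}$ meeting all $D_\alpha$ and evaluating all $\dot S_\alpha$ as stationary. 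This is exactly $\MM^{++}$.

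The main obstacle I expect is the bookkeeping in the converse direction: matching the precise form of ``$\SSP$-correct embedding'' to the demand that the $\bool{B}$-names for stationary sets are correctly interpreted, and verifying that presaturation of the tower is genuinely what transfers the stationarity downward through the generic ultrapower. One must be careful that $\leq_{\SSP}$ uses $\SSP$-correct arrows (not arbitrary complete embeddings), since it is precisely the correctness of the arrow — preservation of stationarity of $\omega_1$-sequences of names — that makes the $^{++}$ part of $\MM^{++}$ come out, rather than just plain $\MM$ or $\FA_{\aleph_1}(\SSP)$. The large cardinal hypothesis (class many supercompacts) enters to guarantee the existence of enough presaturated normal towers in the first place, via Theorem~\ref{cor:MM++den}, and to ensure the category $(\SSP,\leq_{\SSP})$ is set-like enough below each condition for the density statement to be meaningful.
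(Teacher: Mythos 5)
The paper never proves this statement: it is quoted from \cite{VIAMMREV} (Theorem 2.12 there), and the survey explicitly declines to enter into the properties of presaturated towers and of $\MM^{++}$, so your proposal can only be judged on its own merits. Your converse direction $(2)\Rightarrow(1)$ is essentially the standard argument and is sound in outline, though the final reflection step is described imprecisely: one does not pull the configuration back because ``the relevant parameters are below $\crit(j)$'' (the algebra $\bool{B}$ certainly is not), but by noting that the filter generated by $j[G]$ lies in $M$ (using $<\delta$-closure of $M$ in $V[H]$ for $\delta>|\trcl(\bool{B})|$) and witnesses \emph{in $M$} the $^{++}$-statement for $j(\bool{B})$, $j(\vec{D})$, $j(\vec{\dot{S}})$; elementarity of $j$ then transfers this to $\bool{B}$ in $V$. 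The dense sets also remain $\omega_1$-many from $M$'s perspective, not ``countably many''.

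The forward direction, however, has a genuine gap. Theorem~\ref{cor:MM++den} as stated yields, from $\FA_{\aleph_1}(\bool{B})$ alone, a complete homomorphism of $\bool{B}$ into a presaturated tower with critical point $\omega_2$; it gives neither (a) normality of that tower nor (b) $\SSP$-correctness of the homomorphism, i.e.\ that the quotient forcing is stationary set preserving --- and (b) is exactly what membership of the arrow in $\rightarrow^{\SSP}$, hence the relation $\TT\leq_{\SSP}\bool{B}$, requires. Your parenthetical ``absorbing the $^{++}$ strengthening into the requirement that the tower be normal and the embedding capture names for stationary sets'' is precisely the non-trivial content of the cited theorem: one must use $\MM^{++}$ (not merely $\MM$) to show that the models $M\prec H_\theta$ admitting $M$-generic filters for $\bool{B}$ correct about stationarity form a stationary set, build from these a normal tower below a supercompact $\delta>|\bool{B}|$, and then verify both its presaturation and the $\SSP$-correctness of the induced embedding. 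Without that construction the forward direction rests on a statement strictly stronger than the one you are entitled to quote.
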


It is not in the scope of this paper to delve into the definition and properties of presaturated tower forcings and of 
the axiom $\MM^{++}$.
Let us just remark the following two facts: 
\begin{itemize}
\item
$\MM^{++}$ is a natural strengthening of Martin's maximum
whose consistency is proved by exactly the same methods producing a model of Martin's maximum.
\item
A presaturated tower $\tow{T}$ inducing a generic ultrapower embedding with critical point $\kappa^+$
is such that whenever $G$ is $V$-generic for $\tow{T}$ we have that
\begin{equation} \label{prop:denscf1}
H_{\kappa^+}^V\prec H_{\kappa^+}^{V[G]}. 
\end{equation}
\end{itemize}
In particular the above theorems show that forcing axioms can be also stated as density properties of class 
partial orders. 
Below we will describe assumptions  $\mathsf{AX}(\Gamma,\kappa)$
yielding a dense class of
forcings in $(\Gamma,\leq_\Gamma)$
whose
generic extensions satisfy (\ref{prop:denscf1}), and producing generic absoluteness results.
We refer the reader to \cite{VIAAUDSTEBOOK,VIAAUD14,VIAMM+++} for details. 

\subsection{Iterated resurrection axioms and generic absoluteness for $H_{\kappa^+}$}

The results and ideas of this subsection
expand on the seminal work of Hamkins and Johnstone~\cite{HAMJOH13} on resurrection axioms.
\begin{definition}
		Let $\Gamma$ be a definable class of complete Boolean algebras closed under two-step iterations.		
		The \emph{cardinal preservation degree} $\cpd(\Gamma)$ of $\Gamma$ is the largest cardinal 
		$\kappa$ such that every $\bool{B} \in \Gamma$ forces that every cardinal $\nu \leq \kappa$ is still a 
		cardinal in $V^\bool{B}$. If all cardinals are preserved by $\Gamma$, we say that $\cpd(\Gamma) = \infty$.

		The \emph{distributivity degree} $\dd(\Gamma)$ of $\Gamma$ is the largest cardinal 
		$\kappa$ such that every $\bool{B} \in \Gamma$ is ${<}\kappa$-distributive.
	\end{definition}

	We remark that the supremum of the cardinals preserved by $\Gamma$ is preserved by $\Gamma$, and the same holds for the property of being ${<}\kappa$ distributive. Furthermore, $\dd(\Gamma) \leq \cpd(\Gamma)$ and $\dd(\Gamma)\neq\infty$ whenever $\Gamma$ is non trivial (i.e., it contains a Boolean algebra that is not forcing equivalent to the trivial Boolean algebra).
	Moreover $\dd(\Gamma)=\cpd(\Gamma)$ whenever $\Gamma$ is closed under two-step iterations and contains the class of ${<}\cpd(\Gamma)$-closed posets.
	
	\begin{definition} \label{def:gamma_gamma}
		Let $\Gamma$ be a definable class of complete Boolean algebras. We let 
		$\gamma = \gamma_\Gamma = \cpd(\Gamma)$.
	\end{definition}
	
	For example, $\gamma = \omega$ if $\Gamma $ is the class of all posets, while for axiom-$A$, proper, $\SP$, $\SSP$ we have that $\gamma = \omega_1$, and for ${<}\kappa-$closed we have that $\gamma = \kappa$.
	
	We aim to isolate for each cardinal $\gamma$ classes of forcings $\Delta_\gamma$ and 
	axioms $\AX(\Delta_\gamma)$ such that:
	\begin{enumerate}
		\item \label{aim:resurrection-1}
		$\gamma=\cpd(\Delta_\gamma)$ and assuming certain large cardinal axioms, the family of 
		$\bool{B}\in \Delta_\gamma$ which force $\AX(\Delta_\gamma)$ is dense in 
		$(\Delta_\gamma,\leq_{\Delta_\gamma})$;
		\item \label{aim:resurrection-2}
		$\AX(\Delta_\gamma)$ gives generic absoluteness for the theory with parameters of 
		$H_{\gamma^+}$ with respect to all forcings in $\Delta_\gamma$ which preserve $\AX(\Delta_\gamma)$;
		\item
		the axioms $\AX(\Delta_\gamma)$ are mutually compatible for the largest possible family of cardinals 
		$\gamma$ simultaneously;
		\item
		the classes $\Delta_\gamma$ are the largest possible for which the axioms 
		$\AX(\Delta_\gamma)$ can possibly be consistent.
	\end{enumerate}

	Towards this aim remark the following:
	\begin{itemize} 
		\item
		$\dd(\Gamma)$ is the least possible cardinal $\gamma$ such that $\AX(\Gamma)$ is a 
		non-trivial axiom asserting generic absoluteness for the theory of $H_{\gamma^+}$ with parameters. 
		In fact, $H_{\dd(\Gamma)}$ is never changed by forcings in $\Gamma$. 
		\item
		$\cpd(\Gamma)$ is the largest possible cardinal $\gamma$ for which an axiom $\AX(\Gamma)$ 
		as above can grant generic absoluteness with respect to $\Gamma$ for the theory of $H_{\gamma^+}$ 
		with parameters. To see this, let $\Gamma$ be such that $\cpd(\Gamma) = \gamma $ and assume 
		towards a contradiction that there is an axiom $\AX(\Gamma)$ yielding generic absoluteness with 
		respect to $\Gamma$ for the theory with parameters of $H_{\lambda}$ with $\lambda>\gamma^+$.

		Assume that $\AX(\Gamma)$ holds in $V$. Since $\cpd(\Gamma) = \gamma$, there exists a 
		$\bool{B} \in \Gamma$ which collapses $\gamma^+$. Let $\bool{C} \leq_\Gamma \bool{B}$ be 
		obtained by property (\ref{aim:resurrection-1}) above for
		$\Gamma=\Delta_\gamma$, so that $\AX(\Gamma)$ holds in $V^\bool{C}$, and remark that 
		$\gamma^+$ cannot be a cardinal in $V^\bool{C}$ as well. 
		Then $\gamma^+$ is a cardinal in $H_{\lambda}$ and not in $H_{\lambda}^\bool{C}$, 
		witnessing failure of generic absoluteness and contradicting property 
		(\ref{aim:resurrection-2}) for $\AX(\Gamma)$.
	\end{itemize}

	We argue that there are axioms $\RA_\omega(\Gamma)$
	satisfying the first two of the above requirements, and which 
	are consistent for a variety of forcing classes $\Gamma$.
	These axioms also provide natural examples for the last two requirements. 
	We will come back later on with philosophical considerations outlining why the last 
	two requirements are also natural.
	We can prove the consistency of $\RA_\omega(\Gamma)$ for forcing classes which are definable in 
	G\"odel-Bernays set theory with classes $\NBG$, closed under two-step iterations,
	weakly iterable (a technical definition asserting that most set sized descending sequences in
	$\leq_\Gamma$ have lower bounds in $\Gamma$, see \cite{VIAAUD14} or \cite{VIAAUDSTEBOOK} 
	for details), and contain
	all the $<\cpd(\Gamma)$-closed forcings.
	
	The axioms $\RA_\alpha(\Gamma)$ for $\alpha$ an ordinal can be formulated in 
	the Morse Kelley axiomatization of set theory $\MK$ as follows:
	\begin{definition}\label{prop:radef}
		Given an ordinal $\alpha$ and a 
		definable\footnote{$\Gamma$ must be definable 
		by a formula with no class quantifier and no class parameter to be on the safe side fwith respect to the definability issues regarding the iterated resurrection axioms 
		raised by the remark right after this definition. All usual classes of forcings such as proper, semiproper,
		stationary set preserving, $<\kappa$-closed, etc.... are definable by formulae satisfying these restrictions.} 
		class of forcings $\Gamma$ closed under two-steps
		iterations, the axiom
			$\RA_\alpha(\Gamma)$ holds if for all $\beta < \alpha$ the class
			\[
			\bp{\bool{B} \in \Gamma: ~~ H_{\gamma^+} \prec H^\bool{B}_{\gamma^+} \wedge 
			V^\bool{B} \models \RA_\beta(\Gamma)}
			\]
			is dense in $\cp{\Gamma, \leq_\Gamma}$ (where $\gamma=\gamma_\Gamma$).
			
			$\RA_\Ord(\Gamma)$ holds if $\RA_\alpha(\Gamma)$ holds for all $\alpha$.
		\end{definition}

\begin{remark}
The above definition can be properly formalized in $\MK$ (but most likely not in $\ZFC$ if $\alpha$ is infinite).
The problem is the following: 
the axioms $\RA_\alpha(\Gamma)$ can be formulated only by means of a transfinite recursion over
a well-founded relation which is not set-like. 
It is a delicate matter to argue that this transfinite recursion can be carried out.
\cite{VIAAUD14} shows that this is the case if the base theory is $\MK$.
\end{remark}

The axiom $\RA_\omega(\Gamma)$ yields generic absoluteness by the following elementary argument: 
\begin{theorem} \label{thm:absoluteness}
		Suppose $n \in \omega$, $\Gamma$ is well behaved, 
		$\RA_n(\Gamma)$ holds, and $\bool{B} \in \Gamma$ forces $\RA_n(\Gamma)$. 
		Then $H_{\gamma^+} \prec_n H_{\gamma^+}^\bool{B}$ (where $\gamma=\gamma_\Gamma$).
	\end{theorem}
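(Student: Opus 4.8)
The statement to prove is Theorem~\ref{thm:absoluteness}: if $\RA_n(\Gamma)$ holds, $\bool{B}\in\Gamma$ forces $\RA_n(\Gamma)$, then $H_{\gamma^+}\prec_n H_{\gamma^+}^{\bool{B}}$. I would argue by induction on $n$. The base case $n=0$ is immediate since $\Sigma_0$-elementarity between $H_{\gamma^+}$ and $H_{\gamma^+}^{\bool{B}}$ amounts to agreement on quantifier-free (really, bounded) formulas with parameters in $H_{\gamma^+}$, and by definition of $\RA_0(\Gamma)$ — or rather by the very hypothesis that $\bool{B}\in\Gamma$, recalling $\gamma=\cpd(\Gamma)$ so $\bool{B}$ preserves $\gamma^+$ and hence computes $H_{\gamma^+}$ correctly up to the parameters we care about — the two structures have the same $\Sigma_0$-theory with parameters in $H_{\gamma^+}$. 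The real content is the induction step, and it is the usual Tarski–Vaught style argument adapted to the resurrection setting.

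\medskip

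So suppose the result holds for $n$ and assume $\RA_{n+1}(\Gamma)$ holds, with $\bool{B}\in\Gamma$ forcing $\RA_{n+1}(\Gamma)$. I want $H_{\gamma^+}\prec_{n+1}H_{\gamma^+}^{\bool{B}}$. By the inductive hypothesis applied to the weaker statement $\RA_n(\Gamma)$ (note $\RA_{n+1}(\Gamma)$ implies $\RA_n(\Gamma)$, and likewise in the extension), we already have $\Sigma_n$-elementarity, so it suffices to verify the Tarski–Vaught criterion for $\Sigma_{n+1}$-formulas: given a $\Sigma_{n+1}$-formula $\exists x\,\psi(x,\vec a)$ with $\vec a\in H_{\gamma^+}$ and with $\psi$ being $\Pi_n$, I must show that if $H_{\gamma^+}^{\bool{B}}\models\exists x\,\psi(x,\vec a)$ then $H_{\gamma^+}\models\exists x\,\psi(x,\vec a)$ (the other direction follows from $\Sigma_n\subseteq\Sigma_{n+1}$ upward-preservation, which the $\Sigma_n$-elementarity already gave). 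Fix a witness $b\in H_{\gamma^+}^{\bool{B}}$, so $H_{\gamma^+}^{\bool{B}}\models\psi(b,\vec a)$. Since $\RA_{n+1}(\Gamma)$ holds in $V^{\bool{B}}$, unravelling the definition at $\beta=n$ there is, inside $V^{\bool{B}}$, a dense class of $\bool{C}\in\Gamma$ with $H_{\gamma^+}^{\bool{B}}\prec H_{\gamma^+}^{\bool{B}*\bool{C}}$ and $V^{\bool{B}*\bool{C}}\models\RA_n(\Gamma)$; in particular one may pick such a $\bool{C}$, and by closure of $\Gamma$ under two-step iterations $\bool{B}*\bool{C}\in\Gamma$. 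Applying $\RA_{n+1}(\Gamma)$ in $V$ itself, the class of $\bool{D}\in\Gamma$ with $H_{\gamma^+}\prec H_{\gamma^+}^{\bool{D}}$ and $V^{\bool{D}}\models\RA_n(\Gamma)$ is dense; intersecting with the condition $\bool{B}*\bool{C}$ gives some $\bool{D}\leq_\Gamma \bool{B}*\bool{C}$ in $\Gamma$ with $H_{\gamma^+}\prec H_{\gamma^+}^{\bool{D}}$ and $V^{\bool{D}}\models\RA_n(\Gamma)$. Now $\bool{D}$ further iterates $\bool{B}*\bool{C}$, so $b$ and $\vec a$ live in $H_{\gamma^+}^{\bool{D}}$, and $H_{\gamma^+}^{\bool{B}}\prec H_{\gamma^+}^{\bool{B}*\bool{C}}\preceq_{n}H_{\gamma^+}^{\bool{D}}$ — the last step by the inductive hypothesis applied in $V^{\bool{B}*\bool{C}}$ to the one-step resurrection $\bool{D}/(\bool{B}*\bool{C})$, which forces $\RA_n(\Gamma)$ — so $\psi(b,\vec a)$, being $\Pi_n$, is preserved and $H_{\gamma^+}^{\bool{D}}\models\psi(b,\vec a)$, whence $H_{\gamma^+}^{\bool{D}}\models\exists x\,\psi(x,\vec a)$. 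Finally $H_{\gamma^+}\prec H_{\gamma^+}^{\bool{D}}$ reflects this back: $H_{\gamma^+}\models\exists x\,\psi(x,\vec a)$, as desired.

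\medskip

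The main obstacle — and the point requiring care — is bookkeeping the iteration algebra quotients correctly: one must make sure that ``$\bool{B}$ forces $\RA_{n+1}(\Gamma)$'' lets us find $\bool{C}$ \emph{over $\bool{B}$}, that the composite $\bool{B}*\bool{C}$ and the further refinement $\bool{D}$ all stay in $\Gamma$ (using closure under two-step iterations, possibly iterated finitely often), and that the quotient $\bool{D}/(\bool{B}*\bool{C})$ genuinely forces $\RA_n(\Gamma)$ so that the inductive hypothesis applies to the inclusion $H_{\gamma^+}^{\bool{B}*\bool{C}}\prec_n H_{\gamma^+}^{\bool{D}}$ — this last is exactly the inductive instance with ``$\bool{B}$'' replaced by the quotient. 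One should also double-check the parameter class is preserved: since $\gamma=\cpd(\Gamma)$, every algebra in $\Gamma$ preserves $\gamma^+$ and hence $H_{\gamma^+}$ sits correctly inside each extension's $H_{\gamma^+}$, so the parameters $\vec a,b$ remain legitimate throughout. Everything else is the routine Tarski–Vaught induction, so the argument is genuinely elementary once the density-plus-iteration moves are lined up in the right order.
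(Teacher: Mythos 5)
Your argument is correct and follows essentially the same route as the paper: induct on $n$, use the density clause of the resurrection axiom to pass to a further extension in which $H_{\gamma^+}^V$ is \emph{fully} elementary, transport the witness into that extension via the inductively obtained $\Sigma_n$-elementarity of the intermediate legs, and reflect the existential statement back down. The only divergence is that your intermediate $\bool{C}$-step is redundant: the paper applies the density clause of $\RA_n(\Gamma)$ in $V$ directly below the condition $\bool{B}$, obtaining a single two-step iteration $\bool{B}\ast\bool{C}$ with $H_{\gamma^+}^V\prec H_{\gamma^+}^{V^{\bool{B}\ast\bool{C}}}$, and then invokes the inductive hypothesis on both legs of the resulting triangle.
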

	\begin{proof}
		We proceed by induction on $n$. Since $\gamma^+ \leq (\gamma^+)^{V^\bool{B}}$, $H_{\gamma^+} \subseteq H_{\gamma^+}^\bool{B}$ and the thesis holds for $n = 0$ by the fact that for all transitive structures $M$, $N$, if $M \subset N$ then $M \prec_0 N$.
		 Suppose now that $n > 0$, and fix $G$ $V$-generic for $\bool{B}$. By $\RA_n(\Gamma)$, let $\bool{C} \in V[G]$ be such that whenever $H$ is $V[G]$-generic for $\bool{C}$, $V[G \ast H] \models \RA_{n-1}(\Gamma)$ and $H_{\gamma^+}^V \prec H_{\gamma^+}^{V[G \ast H]}$. 
		 Hence we have the following diagram:
		\[
			\begin{tikzpicture}[xscale=1.5,yscale=-1.2]
				\node (A0_0) at (0, 0) {$H_{\gamma^+}^V$};
				\node (A0_2) at (2, 0) {$H_{\gamma^+}^{V[G \ast H]}$};
				\node (A1_1) at (1, 1) {$H_{\gamma^+}^{V[G]}$};
				\path (A0_0) edge [->]node [auto] {$\scriptstyle{\Sigma_\omega}$} (A0_2);
				\path (A1_1) edge [->]node [auto,swap] {$\scriptstyle{\Sigma_{n-1}}$} (A0_2);
				\path (A0_0) edge [->]node [auto,swap] {$\scriptstyle{\Sigma_{n-1}}$} (A1_1);
			\end{tikzpicture}
		\]
		obtained by inductive hypothesis applied both on $V$, $V[G]$ and on $V[G]$, $V[G \ast H]$ since in all those classes $\RA_{n-1}(\Gamma)$ holds.

		Let $\phi \equiv \exists x \psi(x)$ be any $\Sigma_{n}$ formula with parameters in $H_{{\gamma^+}}^V$. First suppose that $\phi$ holds in $V$, and fix $\bar{x} \in V$ such that $\psi(\bar{x})$ holds. Since $H_{\gamma^+}^V \prec_{n-1} H_{\gamma^+}^{V[G]}$ and $\psi$ is $\Pi_{n-1}$, it follows that $\psi(\bar{x})$ holds in $V[G]$ hence so does $\phi$.
		Now suppose that $\phi$ holds in $V[G]$ as witnessed by $\bar{x} \in V[G]$. Since $H_{\gamma^+}^{V[G]} \prec_{n-1} H_{\gamma^+}^{V[G \ast H]}$ it follows that $\psi(\bar{x})$ holds in $V[G \ast H]$, hence so does $\phi$. Since $H_{\gamma^+}^V \prec H_{\gamma^+}^{V[G \ast H]}$, the formula $\phi$ holds also in $V$ concluding the proof.
	\end{proof}

	\begin{corollary} \label{cor:absoluteness}
		Assume $\Gamma$ is closed under two-steps iterations and contains the $<\cpd(\Gamma)$-closed forcings.
		If $\RA_\omega(\Gamma)$ holds, and $\bool{B} \in \Gamma$ forces $\RA_\omega(\Gamma)$, 
		then  $H_{\gamma^+} \prec H_{\gamma^+}^\bool{B}$ (where $\gamma=\gamma_\Gamma$).
	\end{corollary}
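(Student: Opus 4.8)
The corollary is an easy consequence of Theorem~\ref{thm:absoluteness} once one observes that the hypotheses on $\Gamma$ guarantee $\RA_n(\Gamma)$ holds for every $n\in\omega$, not just for the particular $n$ fixed in the theorem. Indeed, $\RA_\omega(\Gamma)$ is by definition $\RA_\alpha(\Gamma)$ for all $\alpha<\omega$ (and here $\alpha=\omega$ is just shorthand for ``all finite $\alpha$''), so in particular $\RA_n(\Gamma)$ holds in $V$ for every $n$. Similarly, since $\bool{B}$ forces $\RA_\omega(\Gamma)$, it forces $\RA_n(\Gamma)$ for every $n$. So first I would record this reduction explicitly.

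**Main step.** With that in hand, apply Theorem~\ref{thm:absoluteness} for each $n\in\omega$ separately: the hypotheses ``$\RA_n(\Gamma)$ holds and $\bool{B}\in\Gamma$ forces $\RA_n(\Gamma)$'' are met for every $n$, hence $H_{\gamma^+}\prec_n H_{\gamma^+}^{\bool{B}}$ for every $n$. Since a substructure is fully elementary precisely when it is $\Sigma_n$-elementary for all $n\in\omega$ (every first-order formula has some finite quantifier complexity), we conclude $H_{\gamma^+}\prec H_{\gamma^+}^{\bool{B}}$, which is exactly the claim. The one place the extra hypotheses of the corollary (as opposed to Theorem~\ref{thm:absoluteness}, which only says ``$\Gamma$ well behaved'') get used is in ensuring these are the same notion, or more precisely in making sure that $\dd(\Gamma)=\cpd(\Gamma)=\gamma$ so that $H_{\gamma^+}$ is the right structure and the inclusion $H_{\gamma^+}\subseteq H_{\gamma^+}^{\bool{B}}$ holds; the assumption that $\Gamma$ contains the $<\cpd(\Gamma)$-closed forcings is what forces this equality, as remarked in the text right after the definition of $\dd$ and $\cpd$.

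**The obstacle.** There is essentially no obstacle here — this is a bookkeeping corollary. The only mild subtlety is definitional: one must be a little careful that ``$\Gamma$ well behaved'' (the hypothesis of Theorem~\ref{thm:absoluteness}) is implied by ``$\Gamma$ closed under two-steps iterations and containing the $<\cpd(\Gamma)$-closed forcings,'' i.e. that the class of forcings in the corollary's hypothesis really is among those to which Theorem~\ref{thm:absoluteness} applies; and that the transfinite-recursion issues flagged in the remark after Definition~\ref{prop:radef} do not bite at the finite level (they don't — for finite $\alpha$ the recursion is a finite unfolding and can be carried in $\ZFC$). Once these are checked, the proof is the two-line argument above: $\RA_\omega(\Gamma)$ gives $\RA_n(\Gamma)$ for all $n$ both in $V$ and in $V^{\bool{B}}$, Theorem~\ref{thm:absoluteness} gives $\Sigma_n$-elementarity for each $n$, and full elementarity is the union over $n$.
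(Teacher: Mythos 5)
Your proof is correct and is exactly the intended argument: the paper states the corollary without proof as an immediate consequence of Theorem~\ref{thm:absoluteness}, the point being precisely that $\RA_\omega(\Gamma)$ yields $\RA_n(\Gamma)$ for every $n$ in both $V$ and $V^{\bool{B}}$, so the theorem gives $\Sigma_n$-elementarity for each $n$ and hence full elementarity. Your side remarks on ``well behaved'' versus the corollary's explicit hypotheses and on $\dd(\Gamma)=\cpd(\Gamma)$ are accurate and consistent with the discussion following the definition of $\cpd$ and $\dd$.
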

	
	Regarding the consistency of the axioms $\RA_\omega(\Gamma)$ we have the following:
		\begin{proposition} \label{prop:woodinrall}
		Assume there are class-many Woodin cardinals. Then $\RA_\Ord(\Omega)$ holds.
	\end{proposition}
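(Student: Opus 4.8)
The plan is to prove, by induction on the ordinal $\alpha$, the slightly strengthened statement: \emph{$\RA_\alpha(\Omega)$ holds in $V$ and in every set-generic extension of $V$.} The two external inputs I would rely on are: (i) Woodin's generic absoluteness theorem, namely that class-many Woodin cardinals imply $L(\mathbb{R})^V \prec L(\mathbb{R})^{V[H]}$ for every set-generic $H$, and hence (restricting to $H_{\omega_1}$, which is uniformly definable in $L(\mathbb{R})$, and using $\mathbb{R}^V\subseteq\mathbb{R}^{V[H]}$) $H_{\omega_1}^V \prec H_{\omega_1}^{V[H]}$ with real parameters, for \emph{every} complete boolean algebra $\bool{B}$ (see \cite{LARSON}); and (ii) the standard preservation fact that Woodinness of $\delta$ survives forcing of size $<\delta$, so that class-many Woodin cardinals in $V$ yield class-many Woodin cardinals in every set-generic extension. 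Input (ii) is exactly what lets the induction propagate through generic extensions; input (i) makes the clause ``$H_{\gamma^+}\prec H^{\bool{B}}_{\gamma^+}$'' in Definition~\ref{prop:radef} automatic, since here $\Gamma=\Omega$ and $\gamma=\gamma_\Omega=\omega$.

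The induction itself I expect to be short. The base case $\alpha=0$ is vacuous (there is no $\beta<0$), and it holds in every set-generic extension by (ii). For the inductive step I assume the claim for all $\beta<\alpha$ and argue in $V$; the argument in an arbitrary set-generic extension $V^{\bool{D}}$ is verbatim the same, because $V^{\bool{D}}$ also has class-many Woodin cardinals by (ii). Fix $\beta<\alpha$: I must show that $\bp{\bool{B}\in\Omega : H_{\omega_1}\prec H^{\bool{B}}_{\omega_1} \wedge V^{\bool{B}}\models\RA_\beta(\Omega)}$ is dense in $(\Omega,\leq_\Omega)$. Given an arbitrary $\bool{C}\in\Omega$, I claim that $\bool{C}$ \emph{itself} belongs to this class, so that the identity map witnesses $\bool{C}\leq_\Omega\bool{C}$ and density follows immediately: the clause $V^{\bool{C}}\models\RA_\beta(\Omega)$ is precisely the inductive hypothesis applied to the set forcing $\bool{C}$, while $H_{\omega_1}^V\prec H_{\omega_1}^{V^{\bool{C}}}$ is input (i). Running this over all $\beta<\alpha$ gives $\RA_\alpha(\Omega)$, completing the induction, and therefore $\RA_\Ord(\Omega)$.

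The one point that requires genuine care — and the only real obstacle — is foundational rather than mathematical. The hierarchy $\RA_\alpha(\Gamma)$ is defined by transfinite recursion along a well-founded relation that is not set-like, and the assertions ``$V^{\bool{B}}\models\RA_\beta(\Gamma)$'' and ``dense in the class partial order $(\Gamma,\leq_\Gamma)$'' involve class quantifiers, so the whole induction must be formalized in $\MK$ rather than $\ZFC$; that this recursion is legitimate, and that these statements are well posed, is exactly what is established in \cite{VIAAUD14}, which I would invoke rather than redo. Modulo that formalization and modulo Woodin's theorem, both assumed available, the argument reduces to the short induction above, and in particular the density requirement in Definition~\ref{prop:radef} is trivially met here because, for the class $\Omega$ of \emph{all} forcings, the inductive hypothesis already forces $\RA_\beta(\Omega)$ along every condition.
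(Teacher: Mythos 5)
Your argument is correct and is the standard one: since $\Gamma=\Omega$ is the class of \emph{all} forcings, $\gamma_\Omega=\omega$, the density requirement in Definition~\ref{prop:radef} trivializes (every condition itself lies in the relevant class), the clause $H_{\omega_1}\prec H_{\omega_1}^{\bool{B}}$ is exactly Woodin's generic absoluteness for $L(\mathbb{R})$ under class-many Woodin cardinals, and the Levy--Solovay preservation of Woodinness lets the strengthened induction hypothesis propagate through all set-generic extensions. The paper omits the proof (deferring to \cite{VIAAUD14}), but your induction, including the correct care about formalizing the non-set-like recursion in $\MK$, is precisely the intended argument.
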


	\begin{theorem}
		$\RA_1(\Gamma)$ implies $H_{\gamma^+} \prec_1 V^\bool{B}$ for all $\bool{B} \in \Gamma$, hence it 
		is a strengthening of the bounded forcing axiom\footnote{The bounded forcing axiom 
		$\BFA_\gamma(\Gamma)$ asserts that $H_{\gamma^+}\prec_1 V^{\bool{B}}$
		for all $\bool{B}\in\Gamma$.} $\BFA_\gamma(\Gamma)$ (where $\gamma=\gamma_\Gamma$).
	\end{theorem}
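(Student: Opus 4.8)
The plan is to run a one--step, $\Sigma_1$ variant of the argument for Theorem~\ref{thm:absoluteness}: the density packed into $\RA_1(\Gamma)$ lets one ``resurrect'' a $\Sigma_1$ witness found in a forcing extension back down to $H_{\gamma^+}^V$, and the fact that the codomain can be taken to be all of $V^\bool{B}$ (not just $H_{\gamma^+}^\bool{B}$) comes for free. Fix $\bool{B}\in\Gamma$ and write $\gamma=\gamma_\Gamma=\cpd(\Gamma)$. Each $a\in H_{\gamma^+}^V$ sits inside $V^\bool{B}$ as its canonical name $\check a$, and $\Sigma_1$ statements with such parameters are upward persistent under forcing; so the forward half of $H_{\gamma^+}\prec_1 V^\bool{B}$ is immediate: if $H_{\gamma^+}^V\models\exists x\,\psi(x,\bar a)$ with $\psi$ bounded, take a witness $a_0\in H_{\gamma^+}^V$, note $V\models\psi(a_0,\bar a)$ by transitivity, so $\Qp{\psi(\check{a_0},\check{\bar a})}_\bool{B}=1_\bool{B}$ by the $\Sigma_0$--absoluteness of check names recalled in Section~\ref{sec:CABCT}, whence $\Qp{\exists x\,\psi(x,\check{\bar a})}_\bool{B}=1_\bool{B}$. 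The theorem thus reduces to the reflection
\begin{quote}
$(\dagger)$\quad if $\varphi$ is $\Sigma_1$ with parameters $\bar a\in H_{\gamma^+}^V$ and $\Qp{\varphi(\check{\bar a})}_\bool{B}\neq 0_\bool{B}$, then $H_{\gamma^+}^V\models\varphi(\bar a)$.
\end{quote}

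To prove $(\dagger)$ I would argue as follows. Put $b_0=\Qp{\varphi(\check{\bar a})}_\bool{B}>0_\bool{B}$. Unwinding Definition~\ref{prop:radef}, $\RA_1(\Gamma)$ says precisely that $\bp{\bool{C}\in\Gamma:\; H_{\gamma^+}\prec H_{\gamma^+}^\bool{C}}$ is dense in $(\Gamma,\leq_\Gamma)$ (the clause ``$V^\bool{C}\models\RA_0(\Gamma)$'' being vacuous); pick such a $\bool{C}\leq_\Gamma\bool{B}$ and let $i\colon\bool{B}\to\bool{C}$ be a complete embedding witnessing $\bool{C}\leq_\Gamma\bool{B}$. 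Passing to a $V$--generic $G\subseteq\bool{C}$ with $i(b_0)\in G$ --- which exists after the usual reflection to a set-sized model, and can anyway be bypassed by arguing directly with Boolean values using fullness of $V^\bool{C}$ and the Soundness Theorem~\ref{soundness} --- one gets that $G_0=i^{-1}[G]$ is $V$--generic for $\bool{B}$ with $b_0\in G_0$, so $V[G_0]\models\varphi(\bar a)$, and then $V[G]\models\varphi(\bar a)$ since $V[G_0]\subseteq V[G]$ and $\varphi$ is $\Sigma_1$. Because $\Gamma$ preserves $\gamma$ we have $\bar a\in H_{\gamma^+}^{V[G]}$, so applying in $V[G]$ the $\ZFC$ fact that $H_\kappa\prec_1 V$ for every uncountable cardinal $\kappa$ --- obtained by collapsing an $X\prec H_\theta$ with $\trcl(\bar a)$ and a putative witness inside $X$ and $\lvert X\rvert<\kappa$ --- with $\kappa=(\gamma^+)^{V[G]}$ yields $H_{\gamma^+}^{V[G]}\models\varphi(\bar a)$. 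Finally $H_{\gamma^+}\prec H_{\gamma^+}^\bool{C}$, read off at $G$, gives $H_{\gamma^+}^V\prec H_{\gamma^+}^{V[G]}$, so $H_{\gamma^+}^V\models\varphi(\bar a)$. This proves $(\dagger)$, hence $H_{\gamma^+}\prec_1 V^\bool{B}$ for all $\bool{B}\in\Gamma$, which is exactly the assertion $\BFA_\gamma(\Gamma)$; the implication $\RA_1(\Gamma)\Rightarrow\BFA_\gamma(\Gamma)$ is in general strict, since $\RA_1(\Gamma)$ additionally requires the resurrection class to be dense, not merely that $\Sigma_1$--truth be preserved.

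The step I expect to need the most care is the passage ``$V[G]\models\varphi(\bar a)\Rightarrow H_{\gamma^+}^{V[G]}\models\varphi(\bar a)$'', together with making sense of $H_{\gamma^+}^\bool{C}$ in the resurrection clause and of the inclusion $H_{\gamma^+}^V\subseteq H_{\gamma^+}^\bool{C}$: although a $\bool{C}\in\Gamma$ may collapse $(\gamma^+)^V$, the structure $H_{\gamma^+}^{V^\bool{C}}$ is unambiguous --- it is the class of $V^\bool{C}$--sets hereditarily of $V^\bool{C}$--cardinality $\leq\gamma$, whether ``$\gamma^+$'' is read as the ordinal $(\gamma^+)^V$ or as $(\gamma^+)^{V^\bool{C}}$, because no $V^\bool{C}$--cardinal lies strictly between $\gamma$ and $(\gamma^+)^V$ --- and it genuinely contains $H_{\gamma^+}^V$, since $\lvert\trcl(x)\rvert^{V^\bool{C}}\leq\lvert\trcl(x)\rvert^V\leq\gamma$ for $x\in H_{\gamma^+}^V$; both points rest on $\Gamma$ preserving $\gamma$. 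Everything else --- upward persistence of $\Sigma_1$ formulas through forcing, the behaviour of complete embeddings under $\leq_\Gamma$, fullness of $V^\bool{C}$, and the submodel argument for $H_\kappa\prec_1 V$ --- is routine bookkeeping.
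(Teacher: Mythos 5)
Your argument is essentially the intended one. The paper states this theorem without proof (deferring to the treatment of iterated resurrection axioms in the cited literature), and the natural proof is precisely the one-step $\Sigma_1$ analogue of the proof of Theorem~\ref{thm:absoluteness} that you run: upward persistence of $\Sigma_1$ formulas with parameters in $H_{\gamma^+}^V$ for the easy direction, and for the converse the combination of (i) the density of $\bp{\bool{C}\in\Gamma:\ H_{\gamma^+}\prec H_{\gamma^+}^{\bool{C}}}$ extracted from $\RA_1(\Gamma)$ (the clause ``$V^{\bool{C}}\models\RA_0(\Gamma)$'' being vacuous, as you correctly note), (ii) L\'evy absoluteness $H_{(\gamma^+)^{V[G]}}^{V[G]}\prec_1 V[G]$, and (iii) the full elementarity $H_{\gamma^+}^V\prec H_{\gamma^+}^{V[G]}$ to pull the witness back down. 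Your discussion of why $H_{\gamma^+}^{\bool{C}}$ is unambiguous and contains $H_{\gamma^+}^V$ even when $\bool{C}$ collapses $(\gamma^+)^V$ is exactly the right point to worry about, and your resolution via $\cpd(\Gamma)=\gamma$ is correct.

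One step needs repair. You apply the density of the resurrection class below $\bool{B}$ and then ask for a generic $G\subseteq\bool{C}$ with $i(b_0)\in G$; but the arrow witnessing $\bool{C}\leq_\Gamma\bool{B}$ is only required to be a complete homomorphism, not an injective one, so $i(b_0)$ may be $0_{\bool{C}}$, in which case no such $G$ exists and the argument stalls. The standard fix is to apply the density below $\bool{B}\restriction b_0$ instead (this algebra lies in $\Gamma$ for the well-behaved classes under consideration, which are closed under restrictions): if $j\colon\bool{B}\restriction b_0\to\bool{C}$ witnesses $\bool{C}\leq_\Gamma\bool{B}\restriction b_0$ with $\bool{C}$ in the resurrection class, then $a\mapsto j(a\wedge b_0)$ is a complete homomorphism from $\bool{B}$ to $\bool{C}$ sending $b_0$ to $1_{\bool{C}}$, and every $V$-generic filter for $\bool{C}$ pulls back to a $V$-generic filter for $\bool{B}$ containing $b_0$. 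With this adjustment the remainder of your proof goes through verbatim.
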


	\begin{theorem}[\cite{VIAAUD14}]
		Assume there is a super huge cardinal.\footnote{A cardinal $\kappa$ is \emph{super huge} iff for every ordinal $\alpha$ there exists an elementary embedding $j: V \to M\subseteq V$ with $\crit(j) = \kappa$, $j(\kappa) > \alpha$ and ${}^{j(\kappa)}M \subseteq M$.} 
		
		Then $\RA_{\Ord}(\SSP)+\MM^{++}$ and $\RA_{\Ord}(\mathsf{proper})+\PFA^{++}$ are consistent.
		
		For the consistency of $\RA_{\Ord}(\mathsf{proper})$ a Mahlo cardinal suffices.
		
		Moreover
		it is also consistent relative to a Mahlo cardinal that $\RA_{\Ord}(\Gamma_\kappa)$ holds 
		simultaneously for all cardinals $\kappa$ (where $\Gamma_\kappa$ is the class of $<\kappa$-closed
		forcings)\footnote{It is also consistent the following: 
		\[
		\RA_\Ord(\Omega_{\aleph_0})+ 
		\RA_\Ord(\SSP)+\forall\kappa>\omega_1\,\RA_{\Ord}(\Gamma_\kappa)
		\]}.
	\end{theorem}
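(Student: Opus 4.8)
The plan is to realize each axiom $\RA_{\Ord}(\Gamma)$ as a density phenomenon in the category forcing $(\Gamma,\leq_\Gamma)$, produced by a single iteration of length a suitable large cardinal $\kappa$, and to verify the resurrection clauses by transferring them downward along an elementary embedding with critical point $\kappa$, following the template already used for $\Omega$ in Proposition~\ref{prop:woodinrall}. Fixing $\Gamma$ among proper, $\SSP$, and $\Gamma_\kappa$ — each of which is weakly iterable, closed under two-step iterations, and contains the $<\cpd(\Gamma)$-closed posets — and writing $\gamma=\gamma_\Gamma$, I would first build a bookkeeping iteration $\bool{B}=\bool{B}_\kappa$, the limit of an iteration $\ap{\bool{B}_\alpha:\alpha\leq\kappa}$ inside $\Gamma$, arranged so that along a club of stages $\delta<\kappa$ all the relevant data has been reflected: $\bool{B}_\delta$ is a complete subalgebra of $\bool{B}$, the structure $H_\delta$ of $V^{\bool{B}_\delta}$ reflects correctly into $H_\kappa$ of $V^{\bool{B}}$, and every $\Gamma$-forcing appearing in $V^{\bool{B}_\delta}$ is absorbed by a later initial segment of the iteration. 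Weak iterability of $\Gamma$ is precisely what keeps $\bool{B}$ and all its initial segments inside $\Gamma$ through the limits.

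Next I would verify $\RA_\alpha(\Gamma)$ in $V^{\bool{B}}$ by induction on $\alpha$. The case $\alpha=0$ is immediate, and at a general $\alpha$ the task is to show that below every $\bool{Q}\in\Gamma^{V^{\bool{B}}}$ the class $\bp{\bool{C}:H_{\gamma^+}\prec H_{\gamma^+}^{\bool{C}}\wedge V^{\bool{C}}\models\RA_\beta(\Gamma)\text{ for all }\beta<\alpha}$ is dense in $(\Gamma,\leq_\Gamma)^{V^{\bool{B}}}$. For this I would invoke a large cardinal embedding $j\colon V\to M$ with $\crit(j)=\kappa$: a super huge embedding with $j(\kappa)$ above any prescribed bound and ${}^{j(\kappa)}M\subseteq M$ in the $\SSP$ case, and a weak-compactness/Mahlo reflection argument at $\kappa$ for proper and for $\Gamma_\kappa$. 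Elementarity identifies $j(\bool{B})$ with a tail extension of $\bool{B}$ which below $j(\kappa)$ looks like $\bool{B}$ followed by $\bool{Q}$ followed by more of the iteration, and the closure of $M$ lets me read off, back in $V^{\bool{B}}$, a single $\bool{C}\leq_\Gamma\bool{Q}$ whose extension satisfies $H_{\gamma^+}\prec H_{\gamma^+}^{\bool{C}}$ — using Theorem~\ref{cor:MM++den2} and property~(\ref{prop:denscf1}) in the $\SSP$ case, and the closure and correctness of the iteration otherwise — and, by the inductive hypothesis pulled back through $j$, also satisfies $\RA_\beta(\Gamma)$ for all $\beta<\alpha$. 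Carrying this out for every ordinal $\alpha$ yields $\RA_{\Ord}(\Gamma)$. In the $\SSP$ case I would moreover take $\bool{B}$ to be the revised countable support iteration that forces $\MM^{++}$, so that $\MM^{++}$ holds in $V^{\bool{B}}$ as well; likewise a countable support iteration gives $\PFA^{++}$ together with $\RA_{\Ord}(\mathsf{proper})$, while for $\RA_{\Ord}(\mathsf{proper})$ alone a Mahlo cardinal suffices since then the embedding $j$ may be replaced by reflection to inaccessibles below a Mahlo $\kappa$.

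For the last clause I would run the construction simultaneously for all the classes $\Gamma_\kappa$: since $<\kappa$-closed forcing leaves $H_\kappa$ untouched, an Easton-style interleaving of the resurrection iterations for the various $\Gamma_\kappa$ produces a single model of $\RA_{\Ord}(\Gamma_\kappa)$ for all $\kappa$ at once, with a Mahlo cardinal providing the reflection needed at each coordinate; the footnoted strengthening combining $\RA_{\Ord}(\Omega_{\aleph_0})$, $\RA_{\Ord}(\SSP)$ and all the $\RA_{\Ord}(\Gamma_\kappa)$ is then obtained by the same interleaving performed on top of the $\SSP$ construction.

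The hard part will be the bookkeeping and its compatibility with $j$: the iteration must be chosen so that it is simultaneously entirely inside $\Gamma$ (which is why $\Gamma$ must be weakly iterable and closed under two-step iterations), generic enough that every $\Gamma$-forcing over every intermediate model is eventually absorbed, and homogeneous and definable enough that $j$ carries it to a genuine tail extension — only then does the resurrection clause of $\RA_\beta(\Gamma)$ transfer downward rather than merely upward. A second, foundational obstacle, already noted after Definition~\ref{prop:radef}, is that the recursion defining $\RA_\alpha(\Gamma)$ for infinite $\alpha$ runs over a class-sized well-founded relation that is not set-like, so the whole argument must be carried out in $\MK$ rather than in $\ZFC$.
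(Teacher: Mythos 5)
The paper itself offers no proof of this statement --- it is imported verbatim from \cite{VIAAUD14} --- so your sketch can only be measured against the argument given there. Your overall architecture is the right one (a lottery/bookkeeping iteration of length a large cardinal $\kappa$ staying inside $\Gamma$ by weak iterability, an induction on $\alpha$, the observation that the recursion defining $\RA_\alpha(\Gamma)$ must be run in $\MK$), and for the superhuge/$\SSP$ case the lifting of $j$ together with the closure of $M$ does deliver $H_{\gamma^+}^{V[G]}\prec H_{\gamma^+}^{V[G\ast H]}$ essentially as you describe. The genuine gap is in the engine driving the induction, and it is most visible exactly where the theorem is sharpest, namely the clauses provable from a Mahlo cardinal. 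The inductive step for $\RA_\alpha(\Gamma)$ requires, below every $\bool{Q}$, a tail extension whose $H_{\gamma^+}$ is a \emph{fully} elementary extension of $H_{\gamma^+}^{V[G]}$ \emph{and} which itself models $\RA_\beta(\Gamma)$ for every $\beta<\alpha$; producing such targets for all ordinals $\alpha$ simultaneously is not one reflection step but a transfinite hierarchy of them. In \cite{VIAAUD14} this is organized through the $(\alpha)$-uplifting cardinals: $\kappa$ is $(\alpha)$-uplifting if for each $\beta<\alpha$ there are arbitrarily large $(\beta)$-uplifting inaccessibles $\lambda$ with $V_\kappa\prec V_\lambda$, the elementarity $V_\kappa\prec V_\lambda$ is lifted to the generic extensions by extending the length-$\kappa$ lottery iteration to its length-$\lambda$ analogue, and a Menas-style argument extracts a $(\Ord)$-uplifting cardinal from a Mahlo. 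Your phrase ``the embedding $j$ may be replaced by reflection to inaccessibles below a Mahlo $\kappa$'' points at this but omits the iterated structure that is the actual content of the proof; likewise ``the closure and correctness of the iteration'' does not by itself explain why $H_{\gamma^+}\prec H_{\gamma^+}^{\bool{C}}$ holds for proper or $<\kappa$-closed forcings, where no presaturated tower is available.

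A second, smaller soft spot: in the inductive step you must know that the tail $j(\bool{B})/(G\ast H)$ (respectively $\bool{B}_\lambda/G_\kappa$) is again a forcing in $\Gamma$ as computed in the intermediate model, and that the quotient below the condition selecting $\bool{Q}$ in the lottery really factors as $\bool{Q}$ followed by a $\Gamma$-iteration; this is precisely what the weak iterability and two-step-closure hypotheses are for, and it deserves to be checked rather than asserted. Your Easton-style interleaving for all the $\Gamma_\kappa$ simultaneously, and the combination with $\RA_\Ord(\Omega_{\aleph_0})$ and $\RA_\Ord(\SSP)$, is in the spirit of what \cite{VIAAUD14} actually does, but again rests on verifying that the coordinates do not interfere above their respective critical points, not merely below them.
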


In this regard the axioms $\RA_\alpha(\Gamma)$ for $\Gamma\supseteq\Gamma_\kappa$ ($\Gamma_\kappa$ being the class of $<\kappa$-closed forcings)
appear to be natural companions of the axiom of choice, while the axioms $\RA_\Ord(\Omega)$ and 
$\RA_\Ord(\SSP)+\MM$ 
are natural maximal strengthenings 
of the axiom of choice at the levels $\omega$ and $\omega_1$.
Hence it is in our opinion natural to try to isolate classes of forcings $\Delta_\kappa$ as $\kappa$ 
ranges among the cardinals such that:
\begin{enumerate}
\item $\kappa=\cpd(\Delta_\kappa)$ for all $\kappa$.
\item $\Delta_\kappa\supseteq \Gamma_\kappa$ for all $\kappa$.
\item $\FA_\kappa(\Delta_\kappa)$ and $\RA_\omega(\Delta_\kappa)$ are simultaneously consistent for all $\kappa$.
\item For all cardinals $\kappa$, $\Delta_\kappa$ is the largest possible 
$\Gamma$ with $\cpd(\Gamma)=\kappa$ 
for which  $\FA_\kappa(\Delta_\kappa)$ and $\RA_\omega(\Delta_\kappa)$ are simultaneously 
consistent (and if possible for all $\kappa$ simultaneously). 
\end{enumerate}
Compare the above requests with requirements (3) and (4) in the discussion motivating the introduction of 
the iterated resurrection axioms
on page~\pageref{aim:resurrection-1}. In this regard it appears that we have now a completely satisfactory 
answer about what $\Delta_{\omega}$ and $\Delta_{\omega_1}$ are: i.e., respectively the class of \emph{all} 
forcing notions and the class of all $\SSP$-forcing notions.

\subsection{Boosting Woodin's absoluteness to $L(\Ord^\kappa)$: the axioms $\mathsf{CFA}(\Gamma)$}

We gave detailed arguments leading us to axioms which can be stated as density properties of certain category forcings
and yielding generic absoluteness results for the theory of $H_{\kappa^+}$ for various cardinals $\kappa$.
Exploring Woodin's proof for the generic absoluteness of the Chang model $L(\Ord^\omega)$ one can get an even stronger type of category forcing axiom yielding generic absoluteness results for the 
Chang models $L(\Ord^\kappa)$. The best result we can currently produce is the following
(we refer the interested reader to ~\cite{VIAASP,VIAAUDSTEBOOK,VIAMM+++} for details):

\begin{theorem}\label{thm:mainthm}
Let $\Gamma$ be a $\kappa$-suitable class of forcings\footnote{This is a lenghty and technical definition; roughly
it requires that:
\begin{itemize}
\item 
$\Gamma$ is closed under two-step iterations, and contains all the $<\kappa$-closed posets
(where $\kappa=\cpd(\Gamma)$), 
\item
there is an iteration 
theorem granting that all set sized iterations of posets in
$\Gamma$ has a limit in $\Gamma$, 
\item
$\Gamma$ is defined by a syntactically simple formula (i.e. 
$\Sigma_2$ in the Levy hierarchy of formulae), 
\item 
$\Gamma$ has a dense set of $\Gamma$-rigid elements 
(i.e. the $\bool{B}\in \Gamma$ admitting at most one $i:\bool{B}\to \bool{C}$ witnessing that
$\bool{C}\leq_\Gamma\bool{B}$ for all $\bool{C}\in\Gamma$ form a dense subclass of $\Gamma$).
\end{itemize}}.

Let $\MK^*$ stand for\footnote{In $\MK$ one can define the club filter on the class $\Ord$, 
hence the notion of stationarity for classes of ordinals makes sense.
} 
\[
\MK+\text{ there are stationarily many 
inaccessible cardinals. }
\]
There is an axiom\footnote{$\mathsf{CFA}(\Gamma)$ can be formulated as a density property of the class
forcing $(\Gamma,\leq_\Gamma)$.} $\mathsf{CFA}(\Gamma)$  
which implies $\FA_\kappa(\Gamma)$ as well as $\RA_\Ord(\Gamma)$ and is such that for
any $T^*$  extending 
\[
\MK^*+\mathsf{CFA}(\Gamma)+\kappa\text{ is a regular
cardinal }+S\subset\kappa,
\]
and for any formula
$\phi(S)$, the following are equivalent:
\begin{enumerate}
\item
$T^*\vdash [L(\Ord^{\kappa})\models\phi(S)]$,
\item
$T^*$ proves that for some forcing $\bool{B}\in\Gamma$ 
\[
\Qp{\mathsf{CFA}(\Gamma)}_{\bool{B}}=\Qp{L(\Ord^{\kappa})\models\phi(S)}_{\bool{B}}=1_{\bool{B}}.
\]
\end{enumerate}
\end{theorem}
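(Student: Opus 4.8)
The plan is to prove the two implications separately; $(1)\Rightarrow(2)$ is immediate and essentially all the content lies in $(2)\Rightarrow(1)$, which is the $L(\Ord^\kappa)$-analogue of Cohen's absoluteness (Theorem~\ref{thm:cohabs}) and of the resurrection absoluteness for $H_{\gamma^+}$ (Theorem~\ref{thm:absoluteness}). For $(1)\Rightarrow(2)$ one reasons in an arbitrary $V\models T^*$ and takes $\bool{B}$ to be the trivial Boolean algebra $2$, which is $<\kappa$-closed and so belongs to $\Gamma$; since $V^2$ is $V$ itself, $\Qp{\psi}_2=1_2$ holds exactly when $V\models\psi$, and as $V\models\mathsf{CFA}(\Gamma)$ and (by $(1)$) $V\models[L(\Ord^\kappa)\models\phi(S)]$, both boolean values displayed in $(2)$ equal $1_2$. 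This is uniform in $V$, so $T^*$ proves $(2)$. Together with $(2)\Rightarrow(1)$ and the density in $(\Gamma,\leq_\Gamma)$ of the $\bool{B}$ preserving $\mathsf{CFA}(\Gamma)$ (which is part of $\RA_\Ord(\Gamma)$ and of $\mathsf{CFA}(\Gamma)$ itself), this yields the full generic invariance of $\mathsf{Th}(L(\Ord^\kappa))$ under forcings in $\Gamma$ preserving $\mathsf{CFA}(\Gamma)$.

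For $(2)\Rightarrow(1)$ I would work in a model $V\models T^*$, fix $\bool{B}_0\in\Gamma$ with $\Qp{\mathsf{CFA}(\Gamma)}_{\bool{B}_0}=\Qp{L(\Ord^\kappa)\models\phi(S)}_{\bool{B}_0}=1_{\bool{B}_0}$, and aim at $L(\Ord^\kappa)^V\models\phi(S)$. The engine is the density formulation of $\mathsf{CFA}(\Gamma)$: just as $\RA_\Ord(\Gamma)$ asserts density in $(\Gamma,\leq_\Gamma)$ of the $\bool{B}$ forcing $\RA_\beta(\Gamma)$ together with $H_{\gamma^+}\prec H_{\gamma^+}^{\bool{B}}$, the axiom $\mathsf{CFA}(\Gamma)$ asserts density of the class $\DDD$ of $\bool{C}\in\Gamma$ such that $\bool{C}$ forces $\mathsf{CFA}(\Gamma)$ and the natural embedding $L(\Ord^\kappa)^W\to L(\Ord^\kappa)^{V^{\bool{C}}}$ is fully elementary for every intermediate model $W\models\mathsf{CFA}(\Gamma)$ with $V\subseteq W\subseteq V^{\bool{C}}$, this property being forced by $\bool{C}$ to persist to the quotients $\bool{C}/\dot G$ --- the self-referential clause that plays, for the Chang model, the role that ``$\RA_n$ forces $\RA_{n-1}$'' plays in the proof of Theorem~\ref{thm:absoluteness}. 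By density, pick $\bool{C}\leq_\Gamma\bool{B}_0$ in $\DDD$, which by the density of $\Gamma$-rigid conditions we may take $\Gamma$-rigid, so that the witnessing complete embedding $i\colon\bool{B}_0\to\bool{C}$ is canonical.

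Now the conclusion follows by a boolean-valued computation over $\bool{C}$ (playing the role of the zig-zag in the proof of Theorem~\ref{thm:absoluteness}). Since $\bool{C}\in\DDD$: (a) $\Qp{\mathsf{CFA}(\Gamma)}_{\bool{C}}=1_{\bool{C}}$; (b) $L(\Ord^\kappa)^V\prec L(\Ord^\kappa)^{V^{\bool{C}}}$, so $\Qp{L(\Ord^\kappa)\models\phi(S)}_{\bool{C}}=1_{\bool{C}}$ if and only if $L(\Ord^\kappa)^V\models\phi(S)$; (c) writing $\dot G_0$ for the restriction to $i[\bool{B}_0]$ of the $\bool{C}$-generic, $\bool{C}$ forces that the natural embedding $L(\Ord^\kappa)^{V[\dot G_0]}\to L(\Ord^\kappa)^{V^{\bool{C}}}$ is fully elementary (here $V[\dot G_0]\models\mathsf{CFA}(\Gamma)$ since $\Qp{\mathsf{CFA}(\Gamma)}_{\bool{B}_0}=1_{\bool{B}_0}$, and it is an intermediate model between $V$ and $V^{\bool{C}}$, so the clause in the definition of $\DDD$ applies). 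From $\Qp{L(\Ord^\kappa)\models\phi(S)}_{\bool{B}_0}=1_{\bool{B}_0}$ and the complete embedding $i$ we obtain $\Qp{L(\Ord^\kappa)^{V[\dot G_0]}\models\phi(S)}_{\bool{C}}=1_{\bool{C}}$; by (c) this gives $\Qp{L(\Ord^\kappa)\models\phi(S)}_{\bool{C}}=1_{\bool{C}}$; and then by (b) the sentence $L(\Ord^\kappa)^V\models\phi(S)$ --- which has no parameters outside $V$ --- is true. As this is uniform over $V\models T^*$, we conclude $T^*\vdash[L(\Ord^\kappa)\models\phi(S)]$. (One may equally run this argument with actual generics over transitive collapses of elementary submodels, as in the proof of Theorem~\ref{thm:cohabs}, at the cost of extra care since $L(\Ord^\kappa)$ is a proper class.)

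The main obstacle is not this application-level argument but everything it presupposes about $\mathsf{CFA}(\Gamma)$: isolating the axiom so that the class $\DDD$ above is genuinely dense in $(\Gamma,\leq_\Gamma)$, and proving that this is consistent for $\kappa$-suitable $\Gamma$ (the substance of the results surveyed above and of \cite{VIAASP,VIAAUDSTEBOOK,VIAMM+++}). Two points are delicate. First, one must boost the $H_{\kappa^+}$-correctness that presaturated tower forcings already provide --- recall $(\ref{prop:denscf1})$ and Theorem~\ref{cor:MM++den} --- to full elementarity of the natural embedding of $L(\Ord^\kappa)$; this is precisely where Woodin's proof of $L(\Ord^\omega)$-generic absoluteness must be ``carved'', and where the large-cardinal strength hidden in $\kappa$-suitability (the iteration theorem for $\Gamma$ and its $\Gamma$-rigid dense subclass) and in the ``stationarily many inaccessibles'' clause of $\MK^*$ is spent. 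Second, the construction of $\DDD$ and the verification of its density rest on a transfinite recursion along the non-set-like order $\leq_\Gamma$ --- the same subtlety flagged for $\RA_\Ord(\Gamma)$ --- so one must check both that it can be carried out in $\MK$ (and not merely in $\ZFC$) and that the required densities survive passage to forcing extensions in $\Gamma$, which is where weak iterability of $\Gamma$ enters.
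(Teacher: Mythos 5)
The paper itself offers no proof of Theorem~\ref{thm:mainthm}: it is quoted as a result of \cite{VIAASP,VIAAUDSTEBOOK,VIAMM+++}, so there is no argument in the text to compare yours against line by line. That said, your application-level skeleton is the right one and matches the architecture of the cited proofs and of the analogous results the paper does prove: $(1)\Rightarrow(2)$ via the trivial Boolean algebra (which is $<\kappa$-closed, hence in $\Gamma$) is exactly as you say, and $(2)\Rightarrow(1)$ is run, as you propose, by descending below the given $\bool{B}_0$ to a ($\Gamma$-rigid) condition in a dense class guaranteed by $\mathsf{CFA}(\Gamma)$ and then chasing elementarity of Chang-model embeddings through the two-step diagram, in direct analogy with the zig-zag in the proof of Theorem~\ref{thm:absoluteness} and with Lemma~\ref{lem:CohAbs}.

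The genuine gap is the one you yourself flag, but it should be stated as a gap and not only as an ``obstacle'': the theorem asserts the \emph{existence} of an axiom $\mathsf{CFA}(\Gamma)$ with the stated properties, and your argument does not produce it --- it postulates a class $\DDD$ whose defining clause (full elementarity of $L(\Ord^\kappa)^W\to L(\Ord^\kappa)^{V^{\bool{C}}}$ for \emph{every} intermediate $W\models\mathsf{CFA}(\Gamma)$, persisting to quotients) is precisely the conclusion one must extract from $\kappa$-suitability, total rigidity, and the carving of Woodin's stationary tower argument. In particular your step (c), where the quotient forcing $\bool{C}/\dot G_0$ is claimed to induce an elementary embedding of the Chang model of $V[\dot G_0]$, is the entire technical content of \cite{VIAASP}: it requires showing that the dense class of $\Gamma$-rigid, self-coding conditions ``freezes'' $\leq_\Gamma$ so that the witnessing embedding $i$ is canonical and its quotients stay in $\Gamma$ and in $\DDD$, and this is where the iteration theorem, the $\Sigma_2$-definability of $\Gamma$, and the stationarity clause of $\MK^*$ are all consumed. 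So: correct outline, honest about what is missing, but the missing part is the theorem.
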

We also have that
\begin{theorem}[\cite{VIAASP,VIAAUDSTEBOOK}]
Assume $\Gamma$ is $\kappa$-suitable. Then $\mathsf{CFA}(\Gamma)$ is consistent relative to the 
existence of a $2$-superhuge cardinal\footnote{A cardinal $\kappa$ is  $2$-superhuge if it is supercompact and 
this can be witnessed
by $2$-huge embeddings.}.
\end{theorem}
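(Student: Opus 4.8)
The plan is to realize $\mathsf{CFA}(\Gamma)$ by a single set-sized iteration of length a $2$-superhuge cardinal $\delta$, using a Laver-type function to climb the category order $(\Gamma,\le_\Gamma)$ self-referentially. First I would unwind $\mathsf{CFA}(\Gamma)$ into its density form (promised in the footnote to Theorem~\ref{thm:mainthm} and modelled on Theorem~\ref{cor:MM++den2}): it asserts that the class $\mathcal D$ of those $\bool{B}\in\Gamma$ which are $\Gamma$-rigid, are presaturated normal towers whose associated generic ultrapower has critical point $\kappa^+$, and force $\FA_\kappa(\Gamma)$, $\RA_\Ord(\Gamma)$ and $\mathsf{CFA}(\Gamma)$ itself, is dense in $(\Gamma,\le_\Gamma)$. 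Note that presaturation already gives $H_{\kappa^+}^V\prec H_{\kappa^+}^{V^{\bool{B}}}$ and, since a $\kappa^+$-presaturated tower yields a generic embedding $j'\colon V\to M'$ with $M'$ closed under the $\kappa$-sequences of ordinals of the extension and with $\crit(j')=\kappa^+$ fixing $\kappa$ and all its subsets, even $L(\Ord^\kappa)^V\prec L(\Ord^\kappa)^{V^{\bool{B}}}$; so producing a model of $\mathsf{CFA}(\Gamma)$ reduces to forcing the pure density statement, the generic-absoluteness content of Theorem~\ref{thm:mainthm} being a separate matter taken for granted.

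Second, from the $2$-superhuge $\delta$ I would extract a ``super-Laver function'' $\ell\colon\delta\to V_\delta$ by the usual diagonalization: for every set $x$ and every ordinal $\lambda$ there is an elementary $j\colon V\to M$ with $\crit(j)=\delta$, $j(\delta)>\lambda$, ${}^{j(j(\delta))}M\subseteq M$, and $j(\ell)(\delta)=x$. Using $\ell$ as an oracle I would build a $\Gamma$-iteration $\langle\bool{B}_\alpha:\alpha\le\delta\rangle$: at stage $\alpha$, if $\ell(\alpha)$ codes (in $V^{\bool{B}_\alpha}$) a name $\dot{\bool{Q}}$ for a $\Gamma$-rigid presaturated normal tower --- the $\Gamma$-analogue of the iterands used to force $\MM^{++}$ --- put $\bool{B}_{\alpha+1}=\bool{B}_\alpha\ast\dot{\bool{Q}}$, and otherwise do nothing; at limits apply the limit operation furnished by $\kappa$-suitability. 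The iteration theorem built into $\kappa$-suitability guarantees $\bool{B}_\delta\in\Gamma$, that $H_\kappa$ is untouched while $\delta$ becomes $\kappa^+$ of $V^{\bool{B}_\delta}$, and that cardinals ${\le}\kappa$ are preserved; since the iteration is set-sized, $\MK$ survives, and (truncating at a suitable Mahlo cardinal in $(\delta,j(\delta))$, which $2$-superhugeness provides) the resulting model satisfies $\MK^\ast$.

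Third, I would verify that $\mathcal D$ is dense in $(\Gamma,\le_\Gamma)$ in $V[G]$, where $G$ is $V$-generic for $\bool{B}_\delta$. Given $\bool{B}\in\Gamma$ as computed in $V[G]$, refine it to a $\Gamma$-rigid presaturated tower $\bool{D}\le_\Gamma\bool{B}$ (such being dense in $\Gamma$ by $\kappa$-suitability and by Theorem~\ref{cor:MM++den2}), pick $j\colon V\to M$ as above with $j(\ell)(\delta)$ coding a name for $\bool{D}$ and $\lambda$ large, and note that $j(\bool{B}_\delta)\cong\bool{B}_\delta\ast\dot{\bool{D}}\ast\dot{\bool{B}}^{\mathrm{tail}}$, where $\dot{\bool{B}}^{\mathrm{tail}}$ is the segment of $j(\bool{B}_\delta)$ from stage $\delta+1$ to $j(\delta)$. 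Set $\bool{C}:=\bool{D}\ast\dot{\bool{B}}^{\mathrm{tail}}$; the quotient embedding $\bool{D}\to\bool{C}$ is $\Gamma$-correct, so $\bool{C}\le_\Gamma\bool{D}\le_\Gamma\bool{B}$. One then checks, using the iteration theorem and the iterability of (total) $\Gamma$-rigidity, that $\bool{C}\in\Gamma$ and is again a $\Gamma$-rigid presaturated tower of critical point $\kappa^+$; that $\bool{C}$ forces $\FA_\kappa(\Gamma)$ by the familiar ``the tail catches every maximal antichain of size ${\le}\kappa$ of a $\Gamma$-forcing appearing along the iteration'' argument of Foreman--Magidor--Shelah and Baumgartner; that $\bool{C}$ forces $\RA_\Ord(\Gamma)$ by the inductive argument of Theorem~\ref{thm:absoluteness} together with the same reflection; and, lifting $j$ to $\hat\jmath\colon V[G]\to M[\hat G]$ along an $M$-generic $\hat G\supseteq G$ for $j(\bool{B}_\delta)$, that $M[\hat G]$ is a further extension of $V[G]$ by $\bool{C}$ which --- because the tail is sufficiently presaturated and $2$-hugeness left $M$ closed well past $\delta$ --- still contains every $\kappa$-sequence of ordinals of $V[G\ast\bool{C}]$, whence $\hat\jmath$-elementarity bootstraps ``$\bool{C}\in\mathcal D$'' down to $V[G]$, so that $\bool{C}$ in particular forces $\mathsf{CFA}(\Gamma)$. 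This gives $\mathsf{CFA}(\Gamma)$ in $V[G]$, completing the argument.

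The main obstacle will be the self-referential bookkeeping: making the iteration catch \emph{its own tail}, so that in $V[G]$ every $\Gamma$-forcing is reflected by some earlier stage, while never leaving $\Gamma$ (so the limit operation always applies) and while preserving the total $\Gamma$-rigidity that lets $(\Gamma,\le_\Gamma)$ behave as a genuine partial order. This is precisely where one needs a Laver function anticipating names for forcings \emph{of forcings} --- hence $2$-superhugeness, rather than a mere supercompact, which already suffices for $\FA_\kappa(\Gamma)$ and $\MM^{++}$ --- and where the $j(j(\delta))$-closure of the target $M$ has to be squeezed through the tail forcing.
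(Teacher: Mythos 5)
The paper does not prove this theorem; it cites \cite{VIAASP,VIAAUDSTEBOOK}, and the argument there is structurally different from yours. Rather than running a Laver-guided iteration of length $\delta$, one forces directly with the \emph{cut-off category forcing} $(\Gamma\cap V_\delta,\leq_\Gamma)$ itself. The point is that this single forcing automatically absorbs every $\bool{B}\in\Gamma\cap V_\delta$ (no bookkeeping or Laver function is needed), and --- this is the heart of the matter --- below a totally $\Gamma$-rigid condition $\bool{B}$ the category forcing of $V$ is isomorphic to the category forcing as computed in $V^{\bool{B}}$. This self-similarity is exactly what breaks the circularity inherent in an axiom asserting that densely many forcings force the axiom itself: $\mathsf{CFA}(\Gamma)$ is witnessed by the cut-off category forcings, and the $2$-huge embeddings are used to show that $(\Gamma\cap V_{j(\delta)},\leq_\Gamma)$ as computed in the extension is a presaturated tower (one needs the two layers $j(\delta)$ and $j(j(\delta))$ for this), whence the relevant class is dense.

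Your proposal has a genuine gap precisely at the self-referential step you flag as ``the main obstacle.'' To conclude that $\bool{C}=\bool{D}\ast\dot{\bool{B}}^{\mathrm{tail}}$ forces $\mathsf{CFA}(\Gamma)$ you must verify, in $V[G\ast\bool{C}]$, a density statement about the \emph{proper class} $(\Gamma,\leq_\Gamma)$ whose defining condition again mentions $\mathsf{CFA}(\Gamma)$; the elementarity of $\hat\jmath\colon V[G]\to M[\hat G]$ only controls $\Gamma\cap V_{j(\delta)}^{M[\hat G]}$ and cannot by itself certify such a class statement, so the ``bootstrap'' is circular rather than inductive. A second unsupported step is the claim that the tail segment of a Laver-guided iteration is itself a ($\Gamma$-rigid, presaturated) tower of critical point $\kappa^+$: iterations of towers are not towers, and no iteration theorem in the $\kappa$-suitability package delivers this; it is exactly the failure of this claim for ordinary iterations that motivates replacing the iteration by the category forcing, for which presaturation \emph{can} be established from $2$-superhugeness. (A minor further overreach: a single presaturated tower with critical point $\kappa^+$ gives $H_{\kappa^+}^V\prec H_{\kappa^+}^{V^{\bool{B}}}$, but not $L(\Ord^\kappa)^V\prec L(\Ord^\kappa)^{V^{\bool{B}}}$; the Chang-model absoluteness is the content of Theorem~\ref{thm:mainthm} and needs the full axiom, though you rightly set that aside.) What your approach would buy, if the tail problem could be solved, is a more familiar FMS-style construction; but as written the consistency proof must go through the category forcing itself.
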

While the definition of $\kappa$-suitable $\Gamma$ is rather delicate, it can be shown that
many interesting classes are $\omega_1$-suitable, among others: 
proper, semiproper, $\omega^\omega$-bounding and (semi)proper, preserving a suslin tree and (semi)proper.
\cite{VIAASP} contains a detailed list of classes which are $\omega_1$-suitable.
It is not known whether there can be $\kappa$-suitable classes $\Gamma$ for some $\kappa>\omega_1$.

\section{Some open questions}

Here is a list of questions for which we do not have many clues.....
\begin{enumerate}
\item
What are the $\Gamma$ which are $\kappa$-suitable for a given cardinal $\kappa>\aleph_1$
(i.e. such that $\mathsf{CFA}(\Gamma)$ is consistent)?
\item
Do they even exist for $\kappa>\aleph_1$?
\item
In case they do exist for some $\kappa>\aleph_1$, do we always have a unique maximal $\Gamma$ such
that $\cpd(\Gamma)=\kappa$ as is the case for $\kappa=\aleph_0$ or $\kappa=\aleph_1$?
\end{enumerate}
Any  
interesting iteration theorem for a class $\Gamma\supseteq \Gamma_{\omega_2}$ closed under two-step 
iterations can be used to prove that $\RA_\Ord(\Gamma)$ is consistent relative to 
suitable large cardinal assumptions and 
freezes the theory of $H_{\omega_3}$ with respect to forcings in $\Gamma$ preserving $\RA_\omega(\Gamma)$
(see~\cite{VIAAUD14}).
It is nonetheless still a mystery which classes $\Gamma\supseteq\Gamma_{\omega_2}$ can give us
a nice iteration theorem, even if the recent works by Neeman, Asper\`o, Krueger,
Mota, Velickovic and others are starting to shed some light on this problem 
(see among others ~\cite{krueger:forcingclubaleph2,krueger:mota:stronglyproper,neeman:forcingaleph2}).

We can dare to be more ambitious and replicate the above type of issue at a 
much higher level of the set theoretic hierarchy.
There is a growing set of results regarding the first-order theory of 
$L(V_{\lambda+1})$ assuming $\lambda$ is a very large cardinal
(i.e., for example admitting an elementary $j: L(V_{\lambda+1})\to L(V_{\lambda+1})$ 
with critical point smaller than $\lambda$, see for example~\cite{dimonte:I0GCH,dimonte:I0,woodin:beyondI0}). 
It appears that large fragments of this theory are generically invariant with respect to a great variety of forcings.

\begin{quote}
Assume $j:L(V_{\lambda+1})\to L(V_{\lambda+1})$ is
elementary with critical point smaller than $\lambda$ .
Can any of the results presented in this paper be of any use in the study of which type of 
generic absoluteness results may hold at the level of $L(V_{\lambda+1})$?
\end{quote}

The reader is referred to \cite{VIAASP,VIAAUDSTEBOOK,VIAAUD14,VIAMM+++,VIAMMREV} for further
examinations of these topics.

\subsubsection*{Acknowledgements}
This paper owes much of its clarity to the suggestions of Raphael Carroy, 
and takes advantage of several several fruitful discussions we shared on the material presented here.
I wish to thank Jeffrey Bergflak for his many useful comments.	

\smallskip

\noindent The author acknowledges support from: 
Kurt G\"odel Research Prize Fellowship 2010,
PRIN grant 2012: Logica, modelli e insiemi, 
San Paolo Junior PI grant 2012.

	\bibliographystyle{amsplain}
	\bibliography{Biblio}
\end{document}